\documentclass[10pt, a4paper]{article}

\usepackage{cite}

\usepackage{enumitem}

\usepackage{mathtools}

\usepackage[amsmath,amsthm,thmmarks]{ntheorem}
\usepackage{amscd, amssymb, mathrsfs,amsfonts}
\usepackage{wasysym}
\usepackage{amsmath}

\usepackage[capitalise]{cleveref}
\crefformat{equation}{(#2#1#3)}

\theoremstyle{break} 
\newtheorem{lemma}{Lemma}[section]
\newtheorem{corollary}[lemma]{Corollary}

\newtheorem{theorem}[lemma]{Theorem}
\newtheorem{assumption}[lemma]{Assumption}

\theoremstyle{definition}
\newtheorem{example}[lemma]{Example}
\newtheorem{definition}[lemma]{Definition}
\newtheorem{remark}[lemma]{Remark}

\providecommand{\keywords}[1]
{
	\small	
	\textbf{\textbf{Key words---}} #1
}

\newcommand*\diff{\mathop{}\!\mathrm{d}}

\providecommand{\classifications}[1]
{
	\small	
	\textbf{\textbf{AMS subject classifications---}} #1
}

\title{On the Characterization of Generalized Derivatives for the Solution Operator of the Bilateral Obstacle Problem}


\author{Anne-Therese Rauls \thanks{Departments of Mathematics, TU Darmstadt, Dolivostr.\ 15, 64293 Darmstadt, Germany, anne-therese.rauls@tu-darmstadt.de},
Stefan Ulbrich \thanks{Departments of Mathematics, TU Darmstadt, Dolivostr.\ 15, 64293 Darmstadt, Germany, stefan.ulbrich@tu-darmstadt.de}}

\date{\today}

\begin{document}
	
	\maketitle

	\begin{abstract}
		 We consider optimal control problems for a wide class of bilateral obstacle
		 problems where the control appears in a possibly nonlinear source term.
		 The non-differentiability of the solution operator poses the main challenge
		 for the application of efficient optimization methods and the
		 characterization of Bouligand generalized derivatives of the solution
		 operator is essential for their theoretical foundation and numerical
		 realization. In this paper, we derive specific elements of
		 the Bouligand generalized differential if the control operator satisfies
		 natural monotonicity properties. We construct monotone sequences of
		 controls where the solution operator is G\^ateaux differentiable and
		 characterize the corresponding limit element of the Bouligand generalized
		 differential as being the solution operator of a Dirichlet problem on a quasi-open
		 domain. In contrast to a similar recent result for the unilateral obstacle
		 problem \cite{RaulsUlbrich}, we have to deal with an opposite monotonic behavior of the active
		 and strictly active sets corresponding to the upper and lower obstacle.
		 Moreover, the residual is no longer a nonnegative functional on
		 $H^{-1}$ and its representation as the difference
		 of two nonnegative Radon measures requires special care. This necessitates new proof
		 techniques that yield two elements of the Bouligand generalized
		 differential. Also for the unilateral case we obtain an additional element to that derived in \cite{RaulsUlbrich}.
	\end{abstract}

\keywords{bilateral obstacle problem, variational inequalities, generalized derivatives, Bouligand generalized differential, optimal control, nonsmooth optimization}\\

\classifications{47J20, 49J40, 49K40, 49J52, 58C20, 58E35}

\section{Introduction}
We consider the optimal control of bilateral obstacle problems
\begin{align}
\label{BOP}
\text{Find } y \in K_\psi^\varphi: \quad \langle Ly-f(u),z-y\rangle_{H^{-1}(\Omega),H_0^1(\Omega)} \geq 0
\quad \forall\, z \in K_\psi^\varphi,
\end{align}
with 
\begin{align*}
K_\psi^\varphi:=\{z \in H_0^1(\Omega)\mid \psi \leq z \leq \varphi \text{ q.e.\ in } \Omega\}.
\end{align*}
Here, $\Omega \subseteq \mathbb{R}^d$ is an open, bounded set,
$L\in {\mathcal L}(H_0^1(\Omega),H^{-1}(\Omega))$ is a coercive and strictly
T-monotone operator. Here, strict T-monotonicity means that
\begin{align*}
\left\langle L(y-z),(y-z)_+\right\rangle_{H^{-1}(\Omega),H_0^1(\Omega)}>0 
\end{align*}
for all $y, z \in H_0^1(\Omega)$ 
with $(y-z)_+=\sup(0,y-z) \neq 0$, see \cite[p.\ 105]{Rodrigues}.
Furthermore, $f\colon U\to H^{-1}(\Omega)$ is a 
continuously differentiable and monotone operator on a partially ordered Banach space
$U$. The detailed assumptions on $f$ and $U$ will be given below in
\cref{Assumption}.
We assume that the obstacles $\psi, \varphi \in H^1(\Omega)$ 
are such that $K_\psi^\varphi$ is nonempty.
In addition, 
for some results in this paper we require the following assumption.

\begin{assumption}
	\label{AssumptionObstacles}
	We consider lower and upper obstacles 
	$\psi, \varphi \in H^1(\Omega)\cap L^\infty(\Omega)$
	and assume there is a constant $c_\psi^\varphi>0$ such that
	$\varphi-\psi \geq c_\psi^\varphi$ holds a.e.\ in $\Omega$.
\end{assumption}

It is well known, see for example \cite{Barbu, Kinderlehrer}, that
for each $u \in U$, the variational inequality \cref{BOP} has a unique solution
and the solution operator 
$S_f=S_{\psi,f}^\varphi \colon U \to H_0^1(\Omega)$ is locally Lipschitz continuous.

The optimal control of obstacle problems and elliptic variational
inequalities has been studied by many authors, see for example
\cite{Barbu,Bergounioux,BergouniouxLenhart,Friedman,ItoKunisch,HarderWachsmuth,HintermullerKopacka,KarkkeinenKunischTarvainen,
	KunischWachsmuth,MeyerRademacherWollner,Mignot,MignotPuel,RaulsUlbrich,RaulsWachsmuth,SchielaWachsmuth}.
By using penalization, relaxation or regularization
approaches, optimality conditions have been derived in
\cite{Barbu,Bergounioux,BergouniouxLenhart,MignotPuel,ItoKunisch,HintermullerKopacka},
where \cite{BergouniouxLenhart} considers the obstacles for the bilateral
case as controls. Numerical solution methods based on these techniques have
been developed in
\cite{ItoKunisch,HintermullerKopacka,KarkkeinenKunischTarvainen,KunischWachsmuth,MeyerRademacherWollner,SchielaWachsmuth}.
Other approaches consider directly the nonsmooth solution operator.
Different optimality systems for the optimal control of the obstacle problem
are compared in \cite{HarderWachsmuth}. For the application of nonsmooth
optimization methods like bundle methods, the knowledge of at least one
element in the generalized differential of the objective function is
required.
For the finite dimensional obstacle problems, a characterization of the whole Clarke
subdifferential of the reduced objective function was obtained in
\cite{Haslinger}.
The directional differentiability of solution operators of elliptic
variational inequalities and a variational inequality for the directional
derivative have been obtained in \cite{Haraux,Mignot}, see also
\cite{ChristofWachsmuth1}.
The resulting structure of G\^ateaux derivatives in points of differentiability was used
in \cite{RaulsWachsmuth} to characterize the full Bouligand generalized
differential for the solution operator of the unilateral obstacle problem
with distributed control $f(u)=u\in H^{-1}(\Omega)$. Subsequently, for
Lipschitz continuous, continuously differentiable, and monotone control
operators $f \colon U\to H^{-1}(\Omega)$ on a partially ordered Banach space
$U$, an element of the Bouligand generalized
differential for the solution operator of the unilateral obstacle problem
has been characterized in \cite{RaulsUlbrich} as the solution operator
of a variational equation on the inactive set. This forms an analytical
foundation to develop error estimators for the numerical computation of
subgradients and to apply inexact bundle methods in Hilbert space, see for
example \cite{HertleinUlbrich}.

In this paper, we derive, based on a characterization of the G\^ateaux
derivative in points of differentiability,
two elements of the Bouligand generalized
differential for the solution operator $S_f\colon U \to H_0^1(\Omega)$
of the bilateral obstacle problem \cref{BOP} in points of nonsmoothness.
To this end, we require the following monotonicity property of the control operator
$f\colon U \to H^{-1}(\Omega)$.

\begin{assumption}
	\label{Assumption}
	We assume that the operator $f \colon U \to H^{-1}(\Omega)$ is defined on a partially ordered Banach space $(U,\geq_U)$.
	Let $f$ be increasing, i.e.,
	$u_1 \geq_U u_2$ implies $f(u_1) \geq f(u_2)$
	in $H^{-1}(\Omega)$,
	i.e., 
	$\langle f(u_1)-f(u_2),v\rangle_{H^{-1}(\Omega),H_0^1(\Omega)} \geq 0$
	for all $v \in H_0^1(\Omega)_+:=\{v \in H_0^1(\Omega)\mid v \geq 0\}$.
	Moreover, let $f$ be continuously differentiable.
	
	In addition, we assume that $U$ is separable
	and that there is a partially ordered Banach space $(V,\geq_V)$
	such that the positive cone $\mathcal{P}=\{v \in V\mid v \geq_V 0\}$
	has nonempty interior
	and $V$ is embedded into $U$.
	The order relation $\geq_V$
	has the property that for all $v_1, v_2 \in V$
	with $v_1 \geq_V v_2$
	we have $v_1+z\geq_V v_2+z$ for all $z \in V$
	and $t\,v_1 \geq_V t\,v_2$ for all scalars $t \geq 0$.
	We assume that the linear embedding $\iota \colon V \to U$
	is continuous, dense and increasing, i.e.,
	compatible with the order structures in $V$ and $U$.
	This means that $v \in V$ with $v \geq_V 0$
	implies $\iota(v)\geq_U 0$ in $U$.
\end{assumption}

With the help of a generalization of Rademacher's theorem to infinite dimensions,
see \cref{Thm:Rademacher},
the assumption allows us to construct for
each $u\in U$ a monotonically increasing (or monotonically decreasing) sequence
$(u_n)_{n \in \mathbb{N}}\subseteq U$ converging to $u$, such that the locally Lipschitz continuous solution operator $S_f$
is G\^ateaux differentiable at $u_n$.
In particular, the assumptions on $U$ are fulfilled for $U=L^2(\Omega)$,
$U=H^{-1}(\Omega)$ and $U=\mathbb{R}^k$.

Our strategy for computing an element of the Bouligand generalized
differential is as follows. Let $u\in U$ and denote by $I(f(u))$ and
$A(f(u)):=A_\psi(f(u))\cup A^\varphi(f(u))$ the
inactive and active set of the solution $S_f(u)$, respectively. By
representing the residual $L S_f(u)-f(u)\in H^{-1}(\Omega)$ as the difference
$\tilde{\xi}_\psi-\tilde{\xi}^\varphi$ of two nonnegative Radon measures (\cref{Lem:SignedMeasure}), we can
define the strictly active set $A_{\mathrm{s}}(f(u)):=A^{\mathrm{s}}_\psi(f(u))\cup A_{\mathrm{s}}^\varphi(f(u))$
using the measures $\tilde{\xi}_\psi$ and $\tilde{\xi}^\varphi$.

If the solution operator $S_f$ is G\^ateaux differentiable at $u$, then
$S_f'(u)$ can be obtained as the solution operator of a variational equation
on $H_0^1(D)$, where $D$ can be chosen as any quasi-open subset of
$\Omega$ satisfying $I(f(u)) \subseteq D \subseteq \Omega \setminus A_{\mathrm{s}}(f(u))$,
see \cref{Theo:GateauxDerivative}.

If the solution operator $S_f$ is nonsmooth at $u$, then we can construct 
a monotonically increasing sequence
$(u_n)_{n \in \mathbb{N}}\subseteq U$, where $S_f$ is G\^ateaux differentiable, converging to $u$
and $S_f'(u_n)$ can thus be represented as the solution operator of a variational equation
on $H_0^1(D_n)$ with $D_n:=I(f(u_n))\cup (A^\varphi(f(u_n))\setminus
A_{\mathrm{s}}^\varphi(f(u_n)))$. By using monotonicity properties of the sequence of sets
$(A_{\mathrm{s}}^\varphi(f(u_n)))_{n \in \mathbb{N}}$ and $(A_\psi(f(u_n)))_{n \in \mathbb{N}}$ we show that $(H_0^1(D_n))_{n \in \mathbb{N}}$ converges in the sense of
Mosco to $H_0^1(D)$
for $D:=I(f(u))\cup (A^\varphi(f(u))\setminus
A_{\mathrm{s}}^\varphi(f(u)))$, 
cf.\ \cref{Lem:MoscoConvergence}, and stability properties of variational equations
yield that $(S_f'(u_n))_{n \in \mathbb{N}}$ converges in the strong operator topology to an element
in the Bouligand generalized differential of $S_f$ at $u$, which can be
characterized as the solution operator of a variational equation on
$H_0^1(D)$, see \cref{Theo:MainResult}.
Working with a monotonically decreasing sequence $(u_n)_{n \in \mathbb{N}}\subseteq U$ yields another
generalized derivative.

In \cite{RaulsUlbrich}, a similar approach has been used for the unilateral obstacle
problem. The analysis for the bilateral obstacle problem is more involved and requires new proof techniques for several reasons. First of all, to the best of our knowledge, the decomposition of $L S_f(u)-f(u)\in H^{-1}(\Omega)$ as the difference
$\tilde{\xi}_\psi-\tilde{\xi}^\varphi$ of two nonnegative Radon measures has not been established so far and requires
care. This representation is, in general, only valid on $H_0^1(\Omega)\cap L^\infty(\Omega)$, see \cref{Lem:SignedMeasure}, or, alternatively, if the
distance of the active sets $A^\varphi(f(u))$ and $A_\psi(f(u))$ is positive (\cref{Lem:DistanceActiveSets}), which
we demonstrate by giving a counter example in \cref{ex:counterexample}. Next, while for $(u_n)_{n \in \mathbb{N}}$
increasing the choice $D_n:=I(f(u_n))$ is 
possible and monotonically increasing in the unilateral case, 
the sets $D_n:=I(f(u_n))\cup (A^\varphi(f(u_n))\setminus A_{\mathrm{s}}^\varphi(f(u_n)))$ do
not enjoy monotonicity properties and require the more difficult study of
monotonicty properties of the strictly active sets $A_{\mathrm{s}}^\varphi(f(u_n))$,
see \cref{Lem:MonotonicityStrictlyActiveSets}.
Also in the unilateral case,
the analysis in this paper yields
an additional element of the Bouligand generalized differential
to that derived in \cite{RaulsUlbrich}.

If $L$ is induced by a symmetric coercive bilinear form, then it
is well known that \cref{BOP} are first order optimality conditions of the
problem
\[
\min_{y\in H_0^1(\Omega)} \langle \tfrac{1}{2}Ly-f(u),y\rangle_{H^{-1}(\Omega),H_0^1(\Omega)}
~~\mbox{subject to } y\in K_\psi^\varphi.
\]
Then the residual $L S_f(u)-f(u)\in H^{-1}(\Omega)$ corresponds to the
Lagrange multiplier and our careful study of its representation as the difference
$\tilde{\xi}_\psi-\tilde{\xi}^\varphi$ of two nonnegative Radon measures
gives detailed insights into the structure of the Lagrange multiplier.
This might be helpful also in other contexts, for example the design
and analysis of efficient solution methods for \cref{BOP}.

The paper is organized as follows. \Cref{Sec:Def} recalls some basic
definitions and results on capacity theory, Sobolev spaces on quasi-open
sets and generalized derivatives. 
In \Cref{Sec:DifferentiabilityProperties}, monotonicity and differentiability
properties of the solution operator are analyzed. The well known variational
inequality for the directional derivative is stated and the structure of the
critical cone is studied. To this end, a representation of the residual
$LS_f(u)-f(u)$ as the difference $\tilde{\xi}_\psi-\tilde{\xi}^\varphi$ of two nonnegative
Radon measures is derived and the strictly active sets are defined based on
these measures. A counterexample shows that this representation
is, in general, only valid on $H_0^1(\Omega)\cap L^\infty(\Omega)$, which
requires some care in the sequel. Moreover, a representation of $S_f'(u)$ is
derived if $u$ is a point of G\^ateaux differentiability. In
\Cref{Sec:Monotonicity} the monotonicity of the strictly active and
active sets $A_{\mathrm{s}}^\varphi(f(u_n))$ and $A_\psi(f(u_n))$ 
is analyzed for monotonically increasing control sequences $(u_n)$ 
(respectively, of $A^{\mathrm{s}}_\psi(f(u_n))$ and $A^\varphi(f(u_n))$ for monotonically decreasing
$(u_n)_{n \in \mathbb{N}}$).
This is used in \Cref{Sec:Mosco} to show the Mosco convergence of
$(H_0^1(I(f(u_n))\cup (A^\varphi(f(u_n))\setminus A_{\mathrm{s}}^\varphi(f(u_n)))))_{n \in \mathbb{N}}$
and $(H_0^1(I(f(u_n))\cup (A_\psi(f(u_n))\setminus A^{\mathrm{s}}_\psi(f(u_n)))))_{n \in \mathbb{N}}$, respectively.
\Cref{Sec:GenDer} uses now stability properties of variational equalities
under Mosco convergence to characterize elements of the Bouligand
generalized differential. Finally, an adjoint
representation of corresponding Clarke subgradients for an objective
functional is derived.

\section{Fundamental Definitions and Results}\label{Sec:Def}

Denote by $C_{\mathrm{c}}(\Omega)$ the space of continuous functions on $\Omega$
with compact support contained in $\Omega$
and by $C_{\mathrm{c}}^\infty(\Omega)$ the subspace of infinitely differentiable functions.
Furthermore,
we denote by $H^1(\Omega)$ the space
\begin{align*}
H^1(\Omega):=\left\{z \in L^2(\Omega)\mid \tfrac{\partial z}{\partial x_i} \in L^2(\Omega), i=1,\ldots,d\right\},
\end{align*}
where $\tfrac{\partial z}{\partial x_i}$ is to be understood in the distributional sense.
$H^1(\Omega)$ is equipped with the norm
\begin{align*}
\|z\|_{H^1(\Omega)}=\left(\int_{\Omega} z^2+\sum_{i=1}^d \left(\frac{\partial z}{\partial x_i}\right)^2\,\diff\lambda^d\right)^{1/2}.
\end{align*}
In this paper,
we work with the space $H_0^1(\Omega)$
and we define it as the completion of $C_{\mathrm{c}}^\infty(\Omega)$
in $H^1(\Omega)$.
On $H_0^1(\Omega)$ we consider the norm
$\|z\|_{H_0^1(\Omega)}:=\|\nabla z\|_{L^2(\Omega)}$.
The dual space of $H_0^1(\Omega)$ is denoted by $H^{-1}(\Omega)$
and if $\zeta \in H^{-1}(\Omega)$ and $z \in H_0^1(\Omega)$
we use the notation $\langle \zeta,z\rangle$ for the dual pairing.

Note that $z \in H_0^1(\Omega)$ can be 
extended by zero to an element of $z \in H^1(\mathbb{R}^d)$,
since the zero extension of the approximating sequence in $C_{\mathrm{c}}^\infty(\Omega)$
is a Cauchy sequence in $H^1(\mathbb{R}^d)$.

We denote by $H^1(\Omega)_+$, respectively $H_0^1(\Omega)_+$,
the respective subsets of nonnegative elements.
For $u\in H_0^1(\Omega)$ and $v \in H^1(\Omega)_+$ we often use that
$\min(u,v) \in H_0^1(\Omega)$.
We use the notation $u_+:=\max(0,u)$, $u_-=-\min(0,u)$,
for the positive and negative part of $u \in L^2(\Omega)$
and have $u=u_+-u_-$.
Furthermore, 
for $n \in \mathbb{N}$ and $u \in H_0^1(\Omega)$,
$u_n:=\max(-n,\min(u,n))$ is in $H_0^1(\Omega)\cap L^\infty(\Omega)$
and it holds $u_n \to u$ in $H_0^1(\Omega)$,
which can be seen by application of Lebesgue's dominated convergence theorem.

\subsection{Capacity Theory}

We quickly recall and clarify the definitions and concepts related to capacity theory
that we consider in this paper.
For the definitions, see also \cite[Sect.~5.82,~5.83]{AttouchButtazzoMichaille}, \cite[Def.~6.2]{DelfourZolesio},
\cite{KilpelainenMaly}.

\begin{definition}
	\label{Def:Capacity}
	\begin{enumerate}
		\item 
		For a set $E \subseteq \Omega$ we define the capacity of $E$ in $\Omega$ by
		\begin{align}
		\label{capacity}
		\operatorname{cap}(E):=\inf\{\|z\|^2_{H_0^1(\Omega)}\mid z \in H_0^1(\Omega), z \geq 1 \text{ a.e.\ in a neighborhood of } E\}.
		\end{align}
		If a property holds on a set $E \subseteq \Omega$
		except on a subset of capacity zero,
		we say that this property holds quasi-everywhere (q.e.) on $E$.
		\item 
		We call a set $O \subseteq \Omega$ quasi-open
		if for all $\varepsilon>0$ there is an open set $O_\varepsilon \subseteq \Omega$
		such that $O \cup O_\varepsilon$ is open and  $\operatorname{cap}(O_\varepsilon)<\varepsilon$.
		A set $A \subseteq \Omega$ is quasi-closed
		if the complement in $\Omega$ is quasi-open.
		\item 
		Let $v \colon \Omega \to \mathbb{R}$ be a function.
		Then $v$ is quasi-continuous
		if for all $\varepsilon>0$
		there exists an open set $O_\varepsilon \subseteq \Omega$
		such that $v|_{\Omega \setminus O_\varepsilon}$ is continuous
		and $\operatorname{cap}(O_\varepsilon)<\varepsilon$.
		\item 
		Let $ E\subseteq \Omega$ be a set.
		A family $(O_i)_{i \in I}$ of quasi-open subsets of $\Omega$
		is called a quasi-covering of $E$
		if there is a countable subfamily $(O_{i_n})_{n \in \mathbb{N}}$
		satisfying
		$\operatorname{cap}\left(E \setminus \bigcup_{n \in \mathbb{N}}O_{i_n}\right)=0$.
		\item 
		Let $O \subseteq \Omega$ be quasi-open.
		Then we define
		\begin{align*}
		H_0^1(O):=\{z \in H_0^1(\Omega)\mid z=0 \text{ q.e.\ on } \Omega \setminus O\}.
		\end{align*}
	\end{enumerate}
\end{definition}

\begin{remark}
	If $O \subseteq \Omega$ is open,
	then the definition of $H_0^1(O)$
	in \cref{Def:Capacity}
	coincides with the classical definition of $H_0^1(O)$
	that we also use in this paper,
	see e.g.\ \cite[Thm.~9.1.3]{AdamsHedberg}. 
	Moreover,
	the definition of $H_0^1(O)$ for $O \subseteq \Omega$ quasi-open
	coincides with the definition 
	\begin{align*}
	H_0^1(O)=\bigcap \{H_0^1(G)\mid O \subseteq G \subseteq \Omega, G \text{ open}\}
	\end{align*}
	given in \cite{KilpelainenMaly}
	(up to extension of the elements by $0$).
	
	We could also define a capacity $\operatorname{Cap}$ for subsets 
	of $\mathbb{R}^d$
	by testing with $H^1(\mathbb{R}^d)$-elements in \cref{capacity}
	and by considering the infimum over the squared $H^1(\mathbb{R}^d)$-norms.
	Then $H_0^1(U)=\{v \in H^1(\mathbb{R}^d)\mid v=0\, \operatorname{Cap}\text{-q.e.\ outside } U\}$
	for $\operatorname{Cap}$-quasi-open subsets $U$ of $\mathbb{R}^d$.
	For $U \subseteq \Omega$,
	$U$ is $\operatorname{Cap}$-quasi-open if and only if it is
	quasi-open
	and the definitions of $H_0^1(U)$ coincide.
\end{remark}

\begin{lemma}
	\label{Lem:Capacity}
	\begin{enumerate}
		\item 
		Let $v \in H^1(\Omega)$. 
		Then $v$ has a quasi-continuous representative $\tilde{v}$.
		If $\tilde w$ is another quasi-continuous representative of $v$,
		then $\tilde v=\tilde w$ up to a set of capacity zero. 
		\item 
		Suppose $(v_n)_{n \in \mathbb{N}}, v \subseteq H_0^1(\Omega)$
		and $v_n \to v$ in $H_0^1(\Omega)$.
		Then there is a subsequence
		$(v_{n_k})_{k \in \mathbb{N}}$ such that $\tilde v_{n_k} \to  \tilde v$ 
		pointwise q.e.\ for the quasi-continuous representatives.
		\item
		Let $O \subseteq \Omega$ be a quasi-open set,
		let $z \in H_0^1(O)$
		and assume that $(O_i)_{i \in I}$ is a quasi-covering of $O$.
		Then there exists a sequence $(z_n)_{n \in \mathbb{N}} \subseteq H_0^1(O)$
		such that $z_n \to z$ 
		and such that each $z_n$ is a finite sum of functions from
		$\bigcup_{i \in I} H_0^1(O_i)$.
		If $(O_n)_{n \in \mathbb{N}}$ is a quasi-covering
		that is increasing in $n$,
		then we find a sequence $(z_n)_{n \in \mathbb{N}}$
		converging to $z$
		such that $z_n \in H_0^1(O_n)$.
		If $z \in H_0^1(\Omega)_+$,
		then, w.l.o.g., $(z_n)_{n \in \mathbb{N}} \subseteq H_0^1(\Omega)_+$.
		\item 
		Assume $O \subseteq \Omega$ is quasi-open
		and suppose $v \colon \Omega \to \mathbb{R}$ is quasi-continuous.
		Then, $v \geq 0$ a.e.\ on $O$ if and only if $v \geq 0$ q.e.\ on $O$.
		\item 
		Assume $O \subseteq \Omega$ is a quasi-open set. 
		Then there exists $v \in H_0^1(\Omega)_+$ with $\{\tilde v>0\}=O$
		up to a set of zero capacity.
		\item After modification on a subset of capacity zero,
		each quasi-continuous function $v\colon \Omega \to \mathbb{R}$
		is Borel measurable.
	\end{enumerate}
\end{lemma}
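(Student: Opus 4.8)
The plan is to treat assertions (1), (2), (4), (6) as standard facts of capacity theory and (5) as the standard characterization of quasi-open sets --- referring to \cite{AdamsHedberg,AttouchButtazzoMichaille,DelfourZolesio,KilpelainenMaly} --- and to give a self-contained argument only for the quasi-covering statement (3), which is the part I expect to require real work. For (1), (2), (6) the main ingredient is the weak-type capacity estimate $\operatorname{cap}(\{|\tilde z|>t\})\le t^{-2}\|z\|_{H_0^1(\Omega)}^2$ for $z\in H_0^1(\Omega)$ and $t>0$ (immediate from \cref{capacity} by testing with $|z|/t$), combined with approximation of $v$ by continuous functions and a Borel--Cantelli argument: a subsequence of the approximants converges uniformly off sets of arbitrarily small capacity, and its limit is the quasi-continuous representative in (1); the same estimate applied to $v_n-v$ gives (2); and (6) holds because a quasi-uniform limit of continuous, hence Borel, functions agrees with a Borel function off a set of capacity, hence Lebesgue measure, zero, where $\tilde v$ may be redefined. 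For (5) one exhausts the quasi-open set $O$ from outside by open sets $G_n\supseteq O$ with $\operatorname{cap}(G_n\setminus O)\to 0$ and subtracts, from a function positive exactly on the open set $G_n$, a capacitary potential of the ``bad'' part $G_n\setminus O$; a suitably weighted sum over $n$ produces $v\in H_0^1(\Omega)_+$ with $\{\tilde v>0\}=O$ up to a set of capacity zero. Given (5), the nontrivial direction of (4) --- that a quasi-continuous $v$ with $v\ge 0$ a.e.\ on a quasi-open $O$ satisfies $v\ge 0$ q.e.\ on $O$ --- is quick: for each $m\in\mathbb N$ the set $E_m:=O\cap\{v<-1/m\}$ is quasi-open (a quasi-open set intersected with the quasi-open preimage of an open set under $v$) and Lebesgue-null, so the function $w$ furnished by (5) with $\{\tilde w>0\}=E_m$ up to capacity zero vanishes a.e., hence in $H_0^1(\Omega)$, whence $\operatorname{cap}(E_m)\le\sum_k\operatorname{cap}(\{\tilde w>1/k\})\le\sum_k k^2\|w\|_{H_0^1(\Omega)}^2=0$; letting $m\to\infty$ gives (4), which in turn yields the uniqueness in (1), since two quasi-continuous representatives of $v$ differ by a quasi-continuous function that is $\ge 0$ and $\le 0$ a.e.

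For (3), which I expect to be the main obstacle, I would build up from the two-set case. Let $A,B\subseteq\Omega$ be quasi-open and $z\in H_0^1(A\cup B)$ with $z\ge 0$, and take by (5) functions $a,b\in H_0^1(\Omega)_+$ with $\{\tilde a>0\}=A$ and $\{\tilde b>0\}=B$ up to capacity zero. Then $u_k:=\min(z,k(a+b))$ is nonnegative and increases to $z$ in $H_0^1(\Omega)$ (the standard truncation convergence for $H_0^1$ of quasi-open sets: $z=0$ q.e.\ off $A\cup B$ while $k(a+b)\to\infty$ q.e.\ on $A\cup B$), and the splitting $u_k=\min(z,ka)+\bigl(\min(z,ka+kb)-\min(z,ka)\bigr)$ exhibits $u_k$ as the sum of a nonnegative element of $H_0^1(A)$ --- the first term vanishes q.e.\ where $a=0$, i.e.\ off $A$ --- and a nonnegative element of $H_0^1(B)$ --- the second term vanishes q.e.\ where $b=0$, i.e.\ off $B$. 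Writing a general $z\in H_0^1(A\cup B)$ as $z_+-z_-$ (both again in $H_0^1(A\cup B)$) and inducting on the number of sets settles finite quasi-coverings, nonnegativity being preserved when $z\ge 0$. For an arbitrary quasi-covering $(O_i)_{i\in I}$ of $O$ one passes, by definition, to a countable subfamily $(O_{i_n})_{n\in\mathbb N}$ and to the increasing unions $\hat O_n:=O_{i_1}\cup\dots\cup O_{i_n}$; choosing via (5) functions $\Phi_n\in H_0^1(\Omega)_+$ that increase in $n$, satisfy $\{\tilde\Phi_n>0\}=\hat O_n$ up to capacity zero, and converge in $H_0^1(\Omega)$ to some $\Phi$ with $\{\tilde\Phi>0\}=O$ up to capacity zero, a diagonal argument on the truncations $\min(z_+,k\Phi_n)\in H_0^1(\hat O_n)$ produces $\zeta_j\to z_+$ with $\zeta_j\in H_0^1(\hat O_{n_j})$; by the finite-union case each $\zeta_j$ is a limit of finite sums of elements of $\bigcup_{i\in I}H_0^1(O_i)$, so a further diagonalization yields the required approximating sequence, nonnegative when $z\ge 0$. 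Finally, if the quasi-covering $(O_n)_{n\in\mathbb N}$ is itself increasing, then $\hat O_n=O_n$ and $H_0^1(O_{n_j})\subseteq H_0^1(O_m)$ whenever $m\ge n_j$, so after padding and reindexing the approximating sequence can be taken with $z_n\in H_0^1(O_n)$.

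The genuinely delicate points, where I expect the effort to go, are that the two-set decomposition must respect the quasi-open supports of its two summands --- this is what forces the minimum-truncation construction and the careful translation between ``a.e.'' and ``q.e.'' on quasi-open sets, which is where (4) and (5) enter --- and that the passage from finite to countable quasi-coverings must be organized so that the intermediate approximants land in the correct spaces $H_0^1(\hat O_n)$, which is what makes the monotone and diagonal setup unavoidable; once these are in place the increasing-quasi-covering and nonnegativity refinements come for free.
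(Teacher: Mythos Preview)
Your proposal is correct. The paper's own proof is simply a list of literature citations---in particular, part (3) is attributed to \cite[Lem.~2.4 and Lem.~2.10]{KilpelainenMaly}---and the self-contained argument you supply for (3), based on truncations $\min(z,k(a+b))$ against functions $a,b$ positive exactly on the quasi-open pieces and the telescoping splitting $\min(z,ka)+(\min(z,ka+kb)-\min(z,ka))$, is precisely the construction underlying those cited lemmas; so your approach agrees with the paper's in substance while spelling out the details the paper omits.
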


\begin{proof}
	The first statement can be found, e.g., in \cite[Chap.~6,~Thm.~6.1]{DelfourZolesio} or \cite[Thm.~4.4]{HeinonenKilpelaeinenMartio},
	the second in \cite[Lem.~6.52]{BonnansShapiro}.
	The first part of the third statement can be obtained by combining \cite[Lem.~2.4~and~Lem.~2.10]{KilpelainenMaly}. 
	For the second part of the third statement
	use that $(O_n)_{n \in \mathbb{N}}$ is increasing and use the first part.
	In case $z \geq 0$, the statement in \cite[Lem.~2.4]{KilpelainenMaly} and the proof of \cite[Lem.~2.10]{KilpelainenMaly} imply that we can choose $(z_n)_{n \in \mathbb{N}} \subseteq H_0^1(\Omega)_+$.
	We refer to \cite[Lem.~2.3]{WachsmuthStrongStationarity} for the fourth statement and to \cite[Prop.~2.3.14]{Velichkov}, \cite[Lem. 3.6]{HarderWachsmuth}
	for the fifth statement.
	The last statement can be found in \cite[Sec.~2.1]{RaulsWachsmuth}.
\end{proof}

\begin{remark}
	Let $v \colon \Omega \to \mathbb{R}$ be a quasi-continuous function.
	Then the set $\{v>0\}$ is quasi-open
	and the set $\{v \geq 0\}$ is quasi-closed.
	If $v \in H^1(\Omega)$,
	then by $\{v>0\}, \{v\geq0\}$ we always mean the sets
	$\{\tilde{v}> 0\}, \{\tilde v\geq0\}$,
	where $\tilde{v}$ is a quasi-continuous and Borel measurable representative.
	Thus, these sets are Borel measurable, quasi-open, respectively quasi-closed, and determined up to a set of capacity zero.
	%
\end{remark}

Throughout the rest of the paper,
when considering set equations or inclusions
for subsets of $\Omega$,
they have to be understood to hold up to an exceptional set of capacity zero.

\subsection{Generalized Derivatives}

We consider the following generalized differential for the solution operator of \cref{BOP}.
\begin{definition}
	\label{Def:Differential}
	Consider a separable Banach space $X$ 
	and a Hilbert space $Y$.
	Assume that $T \colon X \to Y$ is a locally Lipschitz continuous
	operator.
	The set of Bouligand generalized derivatives of $T$ in $x \in X$ is defined as 
	\begin{align}
	\label{GeneralizedDifferential}
	\begin{split}
	\partial T(x):=\left\{\Xi \in \mathcal{L}(X,Y)\mid \exists (x_n)_{n \in \mathbb{N}} \subseteq \mathcal{D}_T \text{ with } x_n \to x \right. \\ \left. \text{and } T'(x_n) \to \Xi \text{ in the weak operator topology}\right\},
	\end{split}
	\end{align}
	for $\mathcal{D}_T:=\{x \in X\mid T \text{ is G\^ateaux differentiable at } x \text{ with G\^ateaux derivative } T'(x)\}$.
\end{definition}

For infinite dimensional spaces,
there are several choices of topologies in $X$ and $Y$.
Combinations of strong and weak topologies in $X$ and $Y$
lead to four possible definitions that do not coincide, in general.
The four versions are defined,
e.g.\ in \cite[Def.~3.1]{ChristofMeyerWaltherClason}, \cite[Def.~2.10]{RaulsWachsmuth},
and characterized for the solution operators of a nonsmooth semilinear elliptic equation
and the unilateral obstacle problem with distributed controls,
respectively.
The set $\partial T(x)$ as defined in \cref{Def:Differential}
is always nonempty as a consequence of Rademacher's theorem
for locally Lipschitz continuous mappings.
Nevertheless, 
the generalized derivatives we construct in this paper
are also contained in the generalized differential that can be obtained
by replacing the weak operator topology in \cref{GeneralizedDifferential}
by the strong operator topology.
A priori it is not clear
that this set is nonempty.

\begin{remark}
	\label{Rem:InclusionGeneralizedDifferentials}
	Let $J \colon H_0^1(\Omega) \times U \to \mathbb{R}$
	be a continuously differentiable objective function.
	Denote by $S_f \colon U \to H_0^1(\Omega)$ the solution operator
	of \cref{BOP}.
	We use the notation $\hat{J}:=J(S_f(\cdot),\cdot)$ 
	for the reduced objective function
	and denote by $\partial_C \hat{J}$ Clarke's generalized differential of $\hat{J} \colon U \to \mathbb{R}$.
	Let $u \in U$ be arbitrary.
	Then the set inclusion
	\begin{align*}
	\{\Xi^*J_y(S_f(u),u)+J_u(S_f(u),u)\mid \Xi \in \partial S_f(u)\}
	\subseteq \partial \hat{J}(u)
	\subseteq \partial_C \hat{J}(u)
	\end{align*}
	holds.
\end{remark}

\section{Properties of the Solution Operator}
\label{Sec:DifferentiabilityProperties}
In this section,
we collect properties of the solution operator $S_f$ of \cref{BOP}.

\subsection{Monotonicity Properties of the Solution Operator}

We state two lemmata on monotonicity properties of $S_f$.
The next lemma summarizes the monotonicity 
of $S_f$ with respect to the elements in $U$. 
The proof is similar to the proof of \cite[Sect.~4:5,~Thm.~5.1]{Rodrigues}
where the property is shown for the unilateral obstacle problem.

\begin{lemma}
	\label{Lem:Monotonicity}
	Let $u_1, u_2 \in U$ with $u_1 \geq_U u_2$.
	Then $S_f(u_1) \geq S_f(u_2)$ a.e.\ and q.e.\ in $\Omega$.
\end{lemma}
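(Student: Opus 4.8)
The plan is to exploit the variational inequality characterization together with the strict T-monotonicity of $L$ and the monotonicity of $f$. Fix $u_1 \geq_U u_2$ and write $y_1 := S_f(u_1)$, $y_2 := S_f(u_2)$. Set $w := (y_2 - y_1)_+ = \sup(0, y_2 - y_1)$; I want to show $w = 0$, which gives $y_1 \geq y_2$ q.e. (and hence a.e.). The test function I would use in the variational inequality for $y_1$ is $z_1 := y_1 + w = \max(y_1, y_2)$, and in the variational inequality for $y_2$ the test function $z_2 := y_2 - w = \min(y_1, y_2)$. The first step is to check admissibility: since $y_1, y_2 \in K_\psi^\varphi$, both $\max(y_1,y_2)$ and $\min(y_1,y_2)$ again lie between $\psi$ and $\varphi$ q.e., hence belong to $K_\psi^\varphi$; and of course $z_i \in H_0^1(\Omega)$ because the lattice operations preserve $H_0^1(\Omega)$.

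Next I would plug these into \cref{BOP}. Testing the inequality for $y_1$ with $z_1 - y_1 = w$ gives $\langle L y_1 - f(u_1), w\rangle \geq 0$, and testing the inequality for $y_2$ with $z_2 - y_2 = -w$ gives $\langle L y_2 - f(u_2), -w\rangle \geq 0$, i.e. $\langle L y_2 - f(u_2), w\rangle \leq 0$. Subtracting,
\[
\langle L(y_2 - y_1), w\rangle \leq \langle f(u_2) - f(u_1), w\rangle.
\]
Now observe $w = (y_2 - y_1)_+ \geq 0$, so $w \in H_0^1(\Omega)_+$, and since $u_1 \geq_U u_2$, the monotonicity assumption on $f$ gives $\langle f(u_1) - f(u_2), w\rangle \geq 0$, i.e. the right-hand side is $\leq 0$. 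Hence $\langle L(y_2 - y_1), w\rangle \leq 0$. On the other hand, writing $v := y_2 - y_1$ so that $w = v_+$, strict T-monotonicity of $L$ says $\langle L(y_2 - y_1), (y_2-y_1)_+\rangle = \langle Lv, v_+\rangle > 0$ whenever $v_+ \neq 0$. Combining the two, we must have $w = (y_2 - y_1)_+ = 0$, i.e. $y_2 \leq y_1$ a.e. Since both are quasi-continuous representatives of $H^1$ functions and the inequality holds a.e., by \cref{Lem:Capacity}(4) (applied with $O = \Omega$) it also holds q.e.

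The main subtlety — rather than a deep obstacle — is the bookkeeping on "a.e." versus "q.e.": the variational inequality is posed with q.e. constraints, the lattice operations and the sign condition on $w$ need to be handled for the quasi-continuous representatives, and one must be careful that $\langle L(y_2-y_1), w\rangle$ is evaluated consistently. The strict T-monotonicity is the crucial ingredient that upgrades "$\leq 0$" to "$= 0$"; if $L$ were only T-monotone one would get $w = 0$ only after an extra argument, but the strict form stated in the hypotheses makes this immediate. I would also remark that the argument is symmetric in structure to the unilateral case in \cite[Sect.~4:5,~Thm.~5.1]{Rodrigues}, the only new point being that the presence of both obstacles does not interfere, since $\max$ and $\min$ of two admissible functions stay admissible for the bilateral constraint set $K_\psi^\varphi$.
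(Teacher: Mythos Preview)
Your proof is correct and follows essentially the same approach as the paper: both use the test functions $\max(y_1,y_2)$ and $\min(y_1,y_2)$ in the two variational inequalities, combine the resulting inequalities, invoke the monotonicity of $f$ on the nonnegative element $(y_2-y_1)_+$, and conclude via strict T-monotonicity of $L$. The only cosmetic difference is that the paper sums the two inequalities to obtain $\langle L(y_1-y_2),(y_2-y_1)_+\rangle\geq 0$ directly, whereas you subtract to get $\langle L(y_2-y_1),(y_2-y_1)_+\rangle\leq 0$; these are of course equivalent.
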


\begin{proof}
	For $i=1,2$ set $y_i:=S_f(u_i)$.
	We test the variational inequality characterizing $y_1$ with
	$z_1=\max(y_1,y_2)=y_1+(y_2-y_1)_+$
	and the variational inequality characterizing $y_2$ with
	$z_2=\min(y_1,y_2)=y_2-(y_2-y_1)_+$,
	respectively,
	and obtain
	\begin{align*}
	0 \leq \langle Ly_1-f(u_1),z_1-y_1\rangle=\langle Ly_1-f(u_1),(y_2-y_1)_+\rangle
	\end{align*}
	and
	\begin{align*}
	0 \leq \langle Ly_2-f(u_2),z_2-y_2\rangle =\langle Ly_2-f(u_2),-(y_2-y_1)_+\rangle.
	\end{align*}
	Summing up both inequalities we obtain
	\begin{align*}
	\langle Ly_1-Ly_2,(y_2-y_1)_+\rangle\geq\langle f(u_1)-f(u_2),(y_2-y_1)_+\rangle\geq 0.
	\end{align*}
	By strict T-monotonicity, we have $(y_2-y_1)_+=0$, i.e.,
	$y_1 \geq y_2$ a.e.\ and q.e.\ in $\Omega$.
\end{proof}

The following lemma establishes monotonicity properties of $S_f$ with respect to one of the obstacles.
\begin{lemma}\label{lem:monobst}
	Let $\psi_i\in H^1(\Omega)$, $i=1,2$, such that $K_{\psi_i}^\varphi$
	are nonempty, and denote by $y_i$ the corresponding solutions of \cref{BOP} for fixed $u \in U$. Then
	$\psi_1\geq\psi_2$ implies $y_1\geq y_2$ a.e.\ and q.e.\ in $\Omega$.
\end{lemma}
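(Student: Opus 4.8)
The plan is to adapt the argument of Lemma \ref{Lem:Monotonicity}, but now comparing two solutions whose admissible sets differ in the lower obstacle rather than in the source term. Fix $u \in U$ and write $f:=f(u)$. The two solutions satisfy
\[
y_1 \in K_{\psi_1}^\varphi:\quad \langle Ly_1-f,z-y_1\rangle \geq 0 \quad \forall z \in K_{\psi_1}^\varphi,
\]
and similarly for $y_2 \in K_{\psi_2}^\varphi$. Since $\psi_1 \geq \psi_2$, we have $K_{\psi_1}^\varphi \subseteq K_{\psi_2}^\varphi$, but not conversely, so I cannot simply test each inequality with the other's solution. Instead, the classical trick is to use $\max$ and $\min$ as test functions, being careful that they land in the correct admissible set.

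First I would check the admissibility of the test functions. Set $v:=(y_2-y_1)_+ = \max(0, y_2-y_1)$, so that $z_1 := \max(y_1,y_2) = y_1 + v$ and $z_2 := \min(y_1,y_2) = y_2 - v$. I claim $z_1 \in K_{\psi_1}^\varphi$: indeed $z_1 = \max(y_1,y_2) \geq y_1 \geq \psi_1$ q.e., and $z_1 = \max(y_1,y_2) \leq \varphi$ q.e.\ since both $y_1,y_2 \leq \varphi$; moreover $z_1 \in H_0^1(\Omega)$ as the max of two $H_0^1$ functions. Similarly $z_2 = \min(y_1,y_2) \in K_{\psi_2}^\varphi$: it satisfies $z_2 \geq \min(\psi_1,\psi_2) = \psi_2$ q.e.\ (using $y_i \geq \psi_i$ and $\psi_1 \geq \psi_2$), $z_2 \leq y_1 \leq \varphi$ q.e., and $z_2 \in H_0^1(\Omega)$. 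Here the hypothesis $\psi_1 \geq \psi_2$ is exactly what makes $z_2$ admissible for the second problem — this is the one place the ordering of the obstacles enters, and it is the analogue of where $u_1 \geq_U u_2$ was used in Lemma \ref{Lem:Monotonicity}.

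With admissibility established, I would test the variational inequality for $y_1$ with $z_1 = y_1 + v$ to get $\langle Ly_1 - f, v\rangle \geq 0$, and the one for $y_2$ with $z_2 = y_2 - v$ to get $\langle Ly_2 - f, -v\rangle \geq 0$. Adding these, the $f$-terms cancel and I obtain $\langle Ly_1 - Ly_2, (y_2-y_1)_+\rangle \geq 0$, i.e.\ $\langle L(y_1-y_2), (y_1-y_2)_- \rangle \geq 0$, equivalently $\langle L((y_2-y_1)) , (y_2-y_1)_+\rangle \leq 0$. By strict T-monotonicity of $L$ applied to $y_2$ and $y_1$ (the stated inequality $\langle L(y_2-y_1),(y_2-y_1)_+\rangle > 0$ whenever $(y_2-y_1)_+ \neq 0$), this forces $(y_2-y_1)_+ = 0$, hence $y_1 \geq y_2$ a.e.\ and q.e.\ in $\Omega$.

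I do not expect any serious obstacle here; the only point requiring a little care is the q.e.\ verification that $z_2 = \min(y_1,y_2) \geq \psi_2$, which follows pointwise q.e.\ from $y_1 \geq \psi_1 \geq \psi_2$ and $y_2 \geq \psi_2$ using quasi-continuous representatives, and the corresponding upper-bound checks, together with the standard fact (recalled in \cref{Sec:Def}) that lattice operations preserve membership in $H_0^1(\Omega)$.
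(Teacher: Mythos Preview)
Your proposal is correct and follows exactly the paper's approach: both test the variational inequality for $y_1$ with $\max(y_1,y_2)\in K_{\psi_1}^\varphi$ and for $y_2$ with $\min(y_1,y_2)\in K_{\psi_2}^\varphi$, add the resulting inequalities to obtain $\langle L(y_1-y_2),(y_2-y_1)_+\rangle\ge 0$, and conclude via strict T-monotonicity. Your admissibility checks are in fact more carefully spelled out than in the paper.
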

\begin{proof}
	We test the variational inequality characterizing $y_1$ with the element
	$z_1=\max(y_1,y_2)=y_1+(y_2-y_1)_+ \in K_{\psi_1}^\varphi$
	and the variational inequality characterizing $y_2$
	with $z_2=\min(y_1,y_2)=y_2-(y_2-y_1)_+\in K_{\psi_2}^\varphi$,
	respectively,
	and obtain
	\[
	0 \leq \left\langle Ly_1-f(u),(y_2-y_1)_+ \right\rangle,\quad
	0 \leq \left\langle Ly_2-f(u),-(y_2-y_1)_+ \right\rangle.
	\]
	Summing up both inequalities we obtain
	\[
	\left\langle L(y_1-y_2),(y_2-y_1)_+ \right\rangle \geq 0.
	\]
	By strict T-monotonicity
	we have $(y_2-y_1)_+=0$,
	i.e., $y_1 \geq y_2$ a.e.\ and q.e.\ in $\Omega$.
\end{proof}

\subsection{Differentiability Properties of the Solution Operator}
\label{Subsec:DifferentiabilityProperties}

We distinguish the following subsets of $\Omega$
for a fixed element $\zeta \in H^{-1}(\Omega)$
that result from the solution $S_{\operatorname{id}}(\zeta)$ of \cref{BOP}
for $f=\operatorname{id}$ being the identity operator on $H^{-1}(\Omega)$.
Let $\zeta \in H^{-1}(\Omega)$.
By 	
\begin{align*}
A(\zeta):=\{\omega \in \Omega\mid S_{\operatorname{id}}(\zeta)(\omega)=\psi(\omega) \text{ or } S_{\operatorname{id}}(\zeta)(\omega)=\varphi(\omega)\}
\end{align*}	
we denote the active set.
We also distinguish the active sets with respect to
$\psi$ and $\varphi$,
i.e.,
we define
\begin{align*}
A_\psi(\zeta):=\{\omega \in \Omega\mid S_{\operatorname{id}}(\zeta)(\omega)=\psi(\omega)\}
\quad \text{ and } \quad 
A^\varphi(\zeta):=\{\omega \in \Omega\mid S_{\operatorname{id}}(\zeta)(\omega)=\varphi(\omega)\}.
\end{align*}
Note that $A(\zeta)=A_\psi(\zeta)\cup A^\varphi(\zeta)$
and that all these sets are quasi-closed sets
that are determined up to a set of capacity zero,
since we consider quasi-continuous representatives of $S_{\operatorname{id}}(\zeta), \psi, \varphi \in H^1(\Omega)$ in the definition of the active sets.
We denote by $I(\zeta)$ the inactive set, i.e.\ the complement of $A(\zeta)$ in $\Omega$
and by $I_\psi(\zeta):=\Omega \setminus A_\psi(\zeta)$, respectively 
$I^\varphi(\zeta):=\Omega \setminus A^\varphi(\zeta)$,
the inactive sets with respect to the two obstacles.

Let $u, h \in U$.
It can be shown
that the directional derivative $S_f'(u;h)$ of the solution operator of variational inequality \cref{BOP} is given by
the solution of
\begin{align}
\label{DirectionalDerivative}
\begin{split}
&\text{Find } \eta \in (LS_f(u)-f(u))^\perp\cap T_{K_\psi^\varphi}(S_f(u)):\\
&\langle L\eta-f'(u;h),z-\eta\rangle\geq0 \quad \forall\,z \in (LS_f(u)-f(u))^\perp\cap T_{K_\psi^\varphi}(S_f(u)).
\end{split}
\end{align}
Here,
$T_{K_\psi^\varphi}(S_f(u))$ 
is the tangent cone of $K_\psi^\varphi$ at $S_f(u)$,
i.e., the closed conic hull of $K_\psi^\varphi-S_f(u)$.
When $f$ is the identity operator on $H^{-1}(\Omega)$,
the variational inequality \cref{DirectionalDerivative}
follows, e.g., from \cite[Thm.~3.3]{Mignot}.
Since $S_f$ is locally Lipschitz continuous,
$S_f$ is even directionally differentiable in the sense of Hadamard,
see \cite[Prop.~2.49]{BonnansShapiro}.
When considering a general operator $f \colon U \to H^{-1}(\Omega)$
fulfilling our assumptions,
\cref{DirectionalDerivative} can be obtained
using the chain rule for Hadamard directionally differentiable
maps, see e.g.\ \cite[Prop.~2.47]{BonnansShapiro}.

By \cite[Lem.~3.4]{Mignot},
we have 
\begin{align}
\label{Tan}
T_{K_\psi^\varphi}(S_f(u))=\{z \in H_0^1(\Omega)\mid z \geq 0 \text{ q.e.\ on } A_\psi(f(u)), z \leq 0 \text{ q.e.\ on } A^\varphi(f(u))\}.
\end{align}

\subsubsection{Analysis of the Critical Cone}

As in the case with a single obstacle, 
we want to find a suitable characterization of the critical cone $(LS_{\operatorname{id}}(\zeta)-\zeta)^\perp \cap T_{K_\psi^\varphi}(S_{\operatorname{id}}(\zeta))$
for arbitrary $\zeta \in H^{-1}(\Omega)$.
Note that in the case with a single lower obstacle such a characterization is given by $\{z \in H_0^1(\Omega)\mid z \geq 0 \text{ q.e.\ on } A(\zeta), z=0 \text{ q.e.\ on } A_{\mathrm{s}}(\zeta)\}$, see \cite[Lem.~3.1]{WachsmuthStrongStationarity}.
Here, $A_{\mathrm{s}}(\zeta)$ is the strictly active set,
which can also be characterized as in \cite[App.~A]{WachsmuthStrongStationarity}.

A crucial difference to the case with only one obstacle
is that $LS_{\operatorname{id}}(\zeta)-\zeta$ is not a nonnegative functional
and thus cannot be identified with a positive measure.
Instead,
we will see that, in some cases, it can be identified with the difference of
two nonnegative Radon measures.
In general, 
i.e., when the active sets $A_\psi(\zeta)$ and $A^\varphi(\zeta)$ 
do not have a positive distance,
$LS_{\operatorname{id}}(\zeta)-\zeta$ acts as the difference of two measures on all elements of $H_0^1(\Omega) \cap L^\infty(\Omega)$,
but the characterization does not carry over to unbounded elements of
$H_0^1(\Omega)$, see \cref{ex:counterexample}.

We define the set of nonnegative Radon measures $\mathcal{M}_+(\Omega)$ on $\Omega$ as
\begin{align*}
\mathcal{M}_+(\Omega)=\{\mu\mid \, &\mbox{$\mu$ 
	is a 
	regular, locally finite Borel measure on $\Omega$}\}.
\end{align*}
Before we present the theorem on the representation,
let us state an auxiliary lemma.

\begin{lemma}
	\label{Lem:ApplicationDominatedConvergence}
	Assume that $v \in L^\infty(\Omega) \cap H^1_0(\Omega)$. Moreover, let
	$(w_n)_{n \in \mathbb N} \subseteq L^\infty(\Omega) \cap H^1_0(\Omega)$ be
	a sequence with $w_n \to 0$ in $H^1_0(\Omega)$ and $|w_n|\leq C$ a.e. for
	some $C>0$ and all $n \in \mathbb N$. Then $v\, w_n \to 0$ in
	$H^1_0(\Omega)$.
\end{lemma}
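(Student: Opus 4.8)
The plan is to show that $v\,w_n$ lies in $H_0^1(\Omega)$ (which is clear, since the product of two bounded $H^1_0$ functions is again in $H^1_0$, using the product rule for weak derivatives), and then to estimate its $H^1_0$-norm, i.e.\ the $L^2$-norm of $\nabla(v\,w_n)$. By the product rule,
\begin{align*}
\nabla(v\,w_n) = w_n\,\nabla v + v\,\nabla w_n \quad\text{a.e.\ in } \Omega.
\end{align*}
I would bound the two summands separately in $L^2(\Omega)$. For the second term, $\|v\,\nabla w_n\|_{L^2(\Omega)} \leq \|v\|_{L^\infty(\Omega)}\,\|\nabla w_n\|_{L^2(\Omega)} = \|v\|_{L^\infty(\Omega)}\,\|w_n\|_{H_0^1(\Omega)} \to 0$ by hypothesis, so this term causes no difficulty.

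The main obstacle is the first term $w_n\,\nabla v$. Here one cannot simply pull out an $L^\infty$ bound on $w_n$ against the $L^2$-norm of $\nabla v$ and conclude, because the bound $|w_n|\le C$ does not by itself go to zero. Instead I would invoke Lebesgue's dominated convergence theorem on the integral $\int_\Omega w_n^2\,|\nabla v|^2\,\diff\lambda^d$. To apply it we need pointwise convergence a.e.\ of (a subsequence of) $w_n$ together with an integrable dominating function. The dominating function is $C^2\,|\nabla v|^2 \in L^1(\Omega)$, which works since $|\nabla v| \in L^2(\Omega)$. For the pointwise convergence, since $w_n \to 0$ in $H_0^1(\Omega)$, some subsequence converges to $0$ a.e.\ in $\Omega$; hence along that subsequence $w_n^2\,|\nabla v|^2 \to 0$ a.e., and dominated convergence gives $\int_\Omega w_n^2\,|\nabla v|^2\,\diff\lambda^d \to 0$, i.e.\ $w_n\,\nabla v \to 0$ in $L^2(\Omega)$ along the subsequence.

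To upgrade from a subsequence to the full sequence I would use the standard subsequence-of-subsequence argument: every subsequence of $(w_n)$ still converges to $0$ in $H_0^1(\Omega)$ and therefore admits a further subsequence along which $w_n\,\nabla v \to 0$ in $L^2(\Omega)$; since $0$ is the only possible limit, the whole sequence $w_n\,\nabla v$ converges to $0$ in $L^2(\Omega)$. Combining the two estimates yields $\|\nabla(v\,w_n)\|_{L^2(\Omega)} \to 0$, and since $v\,w_n \in H_0^1(\Omega)$ with $v\,w_n \to 0$ also in $L^2(\Omega)$ (again by dominated convergence, or directly from $\|v\,w_n\|_{L^2} \le \|v\|_{L^\infty}\|w_n\|_{L^2}$), we conclude $v\,w_n \to 0$ in $H_0^1(\Omega)$, as claimed.
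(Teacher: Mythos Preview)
Your proof is correct and follows essentially the same route as the paper: product rule, the bound $\|v\nabla w_n\|_{L^2}\le\|v\|_{L^\infty}\|\nabla w_n\|_{L^2}\to 0$ for the easy term, and dominated convergence with dominator $C^2|\nabla v|^2$ plus a subsequence--subsequence argument for $\|w_n\nabla v\|_{L^2}$. The only cosmetic difference is that the paper extracts the pointwise-a.e.\ convergent subsequence via pointwise q.e.\ convergence (its \cref{Lem:Capacity}), while you use the more elementary fact that $H_0^1$-convergence implies $L^2$-convergence and hence an a.e.-convergent subsequence; your final remark about $L^2$-convergence of $v\,w_n$ is superfluous since the paper's $H_0^1$-norm is just $\|\nabla\cdot\|_{L^2}$.
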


\begin{proof}
	Let us recall that $v\,w_n \in H_0^1(\Omega)$
	and $\nabla(v\,w_n)=w_n\nabla v+v\nabla w_n$,
	see, e.g., \cite[(7.18)]{GilbargTrudinger}.
	Thus,
	\begin{align*}
	\|v\,w_n\|_{H_0^1(\Omega)}
	\leq\|w_n\nabla v\|_{L^2(\Omega)}+\|v\nabla w_n\|_{L^2(\Omega)}
	\end{align*}
	holds.
	The second term tends to zero since $v \in L^\infty(\Omega)$ and $\nabla w_n \to 0$ in $L^2(\Omega)$.
	Moreover, for a subsequence, the term $\|w_n\nabla v\|_{L^2(\Omega)}$
	converges to zero aswell.
	To see this, pick any subsequence and choose a subsubsequence, for
	simplicity again denoted by $(w_n)_n$,
	that converges to $0$ pointwise q.e., see \cref{Lem:Capacity}, and thus pointwise $\lambda^d$-a.e.
	Now, $\|w_n\nabla v\|_{L^2(\Omega)}\to 0$
	follows from Lebesgue's dominated convergence theorem and $|w_n
	\nabla v| \leq C |\nabla v| \in L^2(\Omega)$.
	The assertion for the whole sequence is obtained by a
	subsequence-subsequence argument.
\end{proof}

Now, we derive a characterization of the multiplier
as the difference of two nonnegative measures. 
Related results can be found in \cite[Lem.~4.2,~Thm.~4.3]{WachsmuthPointwiseConstraints},
see also \cite{BrowderBrezis,GrunRehomme}.
Therein, the admissible set does not depend on the spatial variable 
and, thus, the results do not apply to our setting immediately.

\begin{theorem}
	\label{Lem:SignedMeasure}
	Assume that $\psi, \varphi$ fulfill the conditions of \cref{AssumptionObstacles}.
	Let $\zeta \in H^{-1}(\Omega)$ be arbitrary and
	set $y:=S_{\operatorname{id}}(\zeta)$, $\xi:=Ly-\zeta$.
	Then the following statements hold.
	\begin{enumerate}
		\item 
		\label[theorem]{C5:Lem:SignedMeasure1}
		$\xi \in H^{-1}(\Omega)$ 
		acts as the difference $\tilde{\xi}_\psi-\tilde{\xi}^\varphi$ of nonnegative measures
		$\tilde{\xi}_\psi, \tilde{\xi}^\varphi\in \mathcal{M}_+(\Omega)$
		on all elements of $H_0^1(\Omega) \cap C_{\mathrm{c}}(\Omega)$, i.e.,
		\begin{align}
		\label{Characterization}
		\langle \xi,w\rangle
		=\int_{\Omega} w\,\diff\tilde{\xi}_\psi-\int_\Omega w\,\diff\tilde{\xi}^\varphi
		\end{align}
		holds for all $w \in H_0^1(\Omega) \cap C_{\mathrm{c}}(\Omega)$.
		
		\item
		\label[theorem]{C5:Lem:SignedMeasure2}
		Let $A \subseteq \Omega$
		be an arbitrary Borel set.
		Then $\operatorname{cap}(A)=0$
		implies $\tilde{\xi}_\psi(A)=\tilde{\xi}^\varphi(A)=0$.
		
		\item
		\label[theorem]{C5:Lem:SignedMeasure3}
		The characterization \cref{Characterization}
		carries over to all $w \in H_0^1(\Omega) \cap L^\infty(\Omega)$.
		In particular, 
		the quasi-continuous 
		and Borel measurable
		representatives of $w$ are $\tilde{\xi}_\psi$- and $\tilde{\xi}^\varphi$-integrable.
		
		\item
		\label[theorem]{C5:Lem:SignedMeasure4}
		Furthermore,
		it holds $y=\psi$ $\tilde{\xi}_\psi$-a.e.\ on $\Omega$
		and $y=\varphi$ $\tilde{\xi}^\varphi$-a.e.\ on $\Omega$,
		i.e.,
		$\tilde{\xi}_\psi(I_\psi(\zeta))=0$
		and
		$\tilde{\xi}^\varphi(I^\varphi(\zeta))=0$.

		\item 
		\label[theorem]{C5:Lem:SignedMeasure5}
		Assume $w \in H_0^1(\Omega)\cap L^1(\tilde{\xi}_\psi)$.
		Then we have $w \in L^1(\tilde{\xi}^\varphi)$
		and \cref{Characterization} holds for $w$.
		The opposite statement with exchanged roles of $\tilde{\xi}_\psi$ and $\tilde{\xi}^\varphi$ is also true.
	\end{enumerate}
\end{theorem}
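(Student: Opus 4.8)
The plan is to reduce the whole statement to the single ``multiplier'' estimate by introducing a partition of unity adapted to the two obstacles. Since $\varphi-\psi\geq c_\psi^\varphi>0$ a.e.\ (\cref{AssumptionObstacles}) and $\psi,\varphi\in H^1(\Omega)\cap L^\infty(\Omega)$, the functions $\alpha:=\tfrac{\varphi-y}{\varphi-\psi}$ and $\beta:=1-\alpha=\tfrac{y-\psi}{\varphi-\psi}$ belong to $H^1(\Omega)\cap L^\infty(\Omega)$ with $0\leq\alpha,\beta\leq1$ (the reciprocal of $\varphi-\psi$ lies in $H^1(\Omega)\cap L^\infty(\Omega)$ because $\varphi-\psi$ is bounded away from zero), hence $\alpha w,\beta w\in H_0^1(\Omega)\cap L^\infty(\Omega)$ whenever $w\in H_0^1(\Omega)\cap L^\infty(\Omega)$. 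For such a $w$ with $w\geq0$ and any $0<t\leq c_\psi^\varphi/\|w\|_{L^\infty(\Omega)}$, the elements $y+t\,\alpha w$ and $y-t\,\beta w$ lie in $K_\psi^\varphi$ — here the gap condition enters through $t\,\alpha w\leq\varphi-y$, respectively $t\,\beta w\leq y-\psi$ — so testing \cref{BOP} gives $\langle\xi,\alpha w\rangle\geq0$ and $\langle\xi,\beta w\rangle\leq0$. Thus $\ell_\psi(w):=\langle\xi,\alpha w\rangle$ and $\ell^\varphi(w):=-\langle\xi,\beta w\rangle$ are nonnegative linear functionals on $H_0^1(\Omega)\cap L^\infty(\Omega)$ with $\ell_\psi-\ell^\varphi=\langle\xi,\cdot\rangle$ there.

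For \cref{C5:Lem:SignedMeasure1} I would restrict $\ell_\psi$ to $C_{\mathrm c}^\infty(\Omega)$: nonnegativity together with $-\|\phi\|_{C(\Omega)}\chi_K\leq\phi\leq\|\phi\|_{C(\Omega)}\chi_K$ for a fixed cutoff $\chi_K\in C_{\mathrm c}^\infty(\Omega)$, $\chi_K\geq\mathbf 1_K$, yields $|\ell_\psi(\phi)|\leq\ell_\psi(\chi_K)\,\|\phi\|_{C(\Omega)}$ for $\phi$ supported in $K$, so $\ell_\psi$ extends to a positive linear functional on $C_{\mathrm c}(\Omega)$ and Riesz--Markov produces $\tilde\xi_\psi\in\mathcal M_+(\Omega)$ with $\int\phi\,\diff\tilde\xi_\psi=\langle\xi,\alpha\phi\rangle$ for $\phi\in C_{\mathrm c}^\infty(\Omega)$; likewise $\tilde\xi^\varphi$ from $\ell^\varphi$. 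Mollification — which converges both uniformly and in $H_0^1(\Omega)$ on $H_0^1(\Omega)\cap C_{\mathrm c}(\Omega)$, and for which $\langle\xi,\alpha\phi_k\rangle\to\langle\xi,\alpha g\rangle$ because $\alpha\phi_k\to\alpha g$ in $H_0^1(\Omega)$ by the argument of \cref{Lem:ApplicationDominatedConvergence} (which only uses $\alpha\in H^1(\Omega)\cap L^\infty(\Omega)$) — extends $\langle\xi,w\rangle=\int w\,\diff\tilde\xi_\psi-\int w\,\diff\tilde\xi^\varphi$ and $\int g\,\diff\tilde\xi_\psi=\langle\xi,\alpha g\rangle$ to all $g\in H_0^1(\Omega)\cap C_{\mathrm c}(\Omega)$, which is \cref{C5:Lem:SignedMeasure1}. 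For \cref{C5:Lem:SignedMeasure2} it suffices, by inner regularity, to show $\tilde\xi_\psi(K)=0$ for compact $K$ with $\operatorname{cap}(K)=0$; a cutoff, mollification and truncation of functions $z\in H_0^1(\Omega)$ with $z\geq1$ near $K$ and $\|z\|_{H_0^1(\Omega)}$ arbitrarily small (which exist since $\operatorname{cap}(K)=0$) produces $\hat z_n\in H_0^1(\Omega)\cap C_{\mathrm c}(\Omega)$ with $0\leq\hat z_n\leq1$, $\hat z_n\equiv1$ on an open neighborhood of $K$, and $\hat z_n\to0$ in $H_0^1(\Omega)$, so $\tilde\xi_\psi(K)\leq\int\hat z_n\,\diff\tilde\xi_\psi=\langle\xi,\alpha\hat z_n\rangle\to0$; the same works for $\tilde\xi^\varphi$.

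Part \cref{C5:Lem:SignedMeasure3} I would prove in stages. If $w\in H_0^1(\Omega)\cap L^\infty(\Omega)$ has compact support, its mollifications $\phi_k\in C_{\mathrm c}^\infty(\Omega)$ satisfy $\phi_k\to w$ in $H_0^1(\Omega)$, $\|\phi_k\|_{L^\infty(\Omega)}\leq\|w\|_{L^\infty(\Omega)}$, supports in a fixed compact set, and (along a subsequence) $\phi_k\to\tilde w$ pointwise q.e., hence $\tilde\xi_\psi$- and $\tilde\xi^\varphi$-a.e.\ by \cref{C5:Lem:SignedMeasure2}; dominated convergence passes \cref{Characterization} to $w$ and gives $\tilde w\in L^1(\tilde\xi_\psi)\cap L^1(\tilde\xi^\varphi)$. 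For general $w\geq0$ in $H_0^1(\Omega)\cap L^\infty(\Omega)$, take $v_m\in C_{\mathrm c}^\infty(\Omega)$ with $v_m\to w$ in $H_0^1(\Omega)$ and set $g_m:=\min(\max(v_m,0),w)$, so $0\leq g_m\leq w$, $g_m$ has compact support and $g_m\to w$ in $H_0^1(\Omega)$; then $\int g_m\,\diff\tilde\xi_\psi=\langle\xi,\alpha g_m\rangle\to\langle\xi,\alpha w\rangle$, and combining Fatou (from below) with $g_m\leq\tilde w$ q.e.\ (from above) forces $\int\tilde w\,\diff\tilde\xi_\psi=\langle\xi,\alpha w\rangle<\infty$, and likewise $\int\tilde w\,\diff\tilde\xi^\varphi=-\langle\xi,\beta w\rangle<\infty$; splitting $w=w_+-w_-$ completes \cref{C5:Lem:SignedMeasure3}. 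For \cref{C5:Lem:SignedMeasure4}, note $I_\psi(\zeta)=\{\tilde y>\tilde\psi\}$ is quasi-open; using \cref{Lem:Capacity} (and truncation) pick a bounded $v\in H_0^1(\Omega)_+$ with $\{\tilde v>0\}=I_\psi(\zeta)$, and put $g_n:=\min\bigl(n\,v,\tfrac{c_\psi^\varphi}{\|\varphi-\psi\|_{L^\infty(\Omega)}}(y-\psi)_+\bigr)\in H_0^1(\Omega)\cap L^\infty(\Omega)$. One checks $0\leq\alpha g_n\leq y-\psi$, so $y-\alpha g_n\in K_\psi^\varphi$ and hence $\langle\xi,\alpha g_n\rangle\leq0$; since also $\langle\xi,\alpha g_n\rangle\geq0$ by the sign property of $\ell_\psi$, \cref{C5:Lem:SignedMeasure3} gives $\int g_n\,\diff\tilde\xi_\psi=\langle\xi,\alpha g_n\rangle=0$, so $\tilde\xi_\psi(\{g_n>0\})=0$ for every $n$ and therefore $\tilde\xi_\psi(I_\psi(\zeta))=\tilde\xi_\psi\bigl(\bigcup_n\{g_n>0\}\bigr)=0$; the argument for $\tilde\xi^\varphi(I^\varphi(\zeta))=0$ is symmetric.

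Finally, for \cref{C5:Lem:SignedMeasure5} with $w\geq0$ in $H_0^1(\Omega)\cap L^1(\tilde\xi_\psi)$, the truncations $w^{(m)}:=\min(w,m)\in H_0^1(\Omega)\cap L^\infty(\Omega)$ increase to $w$ and converge to $w$ in $H_0^1(\Omega)$, so by \cref{C5:Lem:SignedMeasure3}
\[
\int w^{(m)}\,\diff\tilde\xi^\varphi=\int w^{(m)}\,\diff\tilde\xi_\psi-\langle\xi,w^{(m)}\rangle\longrightarrow\int\tilde w\,\diff\tilde\xi_\psi-\langle\xi,w\rangle\qquad(m\to\infty),
\]
a finite limit (monotone convergence against $\tilde\xi_\psi$ using $w\in L^1(\tilde\xi_\psi)$, and $\langle\xi,w^{(m)}\rangle\to\langle\xi,w\rangle$ by $H_0^1(\Omega)$-convergence), whereas the left-hand side increases monotonically to $\int\tilde w\,\diff\tilde\xi^\varphi\in[0,\infty]$; hence this integral equals the finite limit, so $w\in L^1(\tilde\xi^\varphi)$ and \cref{Characterization} holds for $w$. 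For general $w$ apply this to $w_+$ and $w_-$ and subtract; the statement with $\tilde\xi_\psi$ and $\tilde\xi^\varphi$ exchanged is proved identically. I expect the main obstacle to be \cref{C5:Lem:SignedMeasure3}: because $\tilde\xi_\psi$ and $\tilde\xi^\varphi$ are only locally finite and — unlike in the unilateral case — need not individually belong to $H^{-1}(\Omega)$, one cannot invoke the standard theory relating $H^{-1}$-measures, capacity and the integrability of bounded $H_0^1(\Omega)$-functions, and integrability must be squeezed out by hand from the one-sided bounds $\langle\xi,\alpha\,\cdot\,\rangle\geq0$, $\langle\xi,\beta\,\cdot\,\rangle\leq0$ together with the continuity of multiplication by $\alpha,\beta\in H^1(\Omega)\cap L^\infty(\Omega)$ along bounded $H_0^1(\Omega)$-convergent sequences.
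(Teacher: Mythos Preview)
Your proposal is correct and follows essentially the same approach as the paper: the key idea --- splitting $\xi$ via the partition $\alpha+\beta=1$ with $\alpha=\tfrac{\varphi-y}{\varphi-\psi}$, $\beta=\tfrac{y-\psi}{\varphi-\psi}$ to obtain two nonnegative functionals, then invoking Riesz--Markov and passing limits via \cref{Lem:ApplicationDominatedConvergence} --- is identical to the paper's (the paper writes $v$ for your $\beta$). The implementation differs only in harmless details: for part~3 the paper uses a direct Cauchy-sequence argument in $L^1(\tilde\xi_\psi)$ rather than your two-stage compact-support/Fatou route, and for part~4 it tests with $\chi[(1-v)\psi+vy]+(1-\chi)y$ for a smooth cutoff $\chi$ and covers $\Omega$ by compacts, whereas your choice $y-\alpha g_n$ avoids the cutoff and the covering.
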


\begin{proof}
	ad 1.:
	We define
	\begin{align*}
	v:=\frac{y-\psi}{\varphi-\psi}.
	\end{align*}
	By the assumptions on $\psi$ and $\varphi$,
	we have $0 \leq v \leq 1$ and $v \in H^1(\Omega) \cap L^\infty(\Omega)$.
	
	Now, 
	for $w \in H_0^1(\Omega) \cap L^\infty(\Omega)$,
	we have $v\,w$, $(1-v)\, w \in H_0^1(\Omega)$
	and
	we write
	\begin{align*}
	\langle \xi,w\rangle
	&=\langle \xi,(1-v)\,w\rangle
	+\langle \xi,v\, w\rangle.
	\end{align*}
	Thus,
	we deduce
	\begin{align*}
	\langle \xi,w\rangle
	=\xi_\psi(w) -\xi^\varphi(w),
	\end{align*}
	where $\xi_\psi$, $\xi^\varphi$ are defined by
	\begin{align*}
	\xi_\psi\colon w \mapsto \langle \xi,(1-v)\, w\rangle,
	\quad
	\xi^\varphi \colon w \mapsto 
	\langle \xi,-v\, w\rangle.
	\end{align*}
	Note that $\xi_\psi$, $\xi^\varphi$
	are nonnegative linear forms on $H_0^1(\Omega)\cap L^\infty(\Omega)$. 
	To see this,
	assume $w \in H_0^1(\Omega)_+ \cap L^\infty(\Omega)$
	and let first $\|w\|_{L^\infty}\leq c_{\psi}^{\varphi}$.
	By definition of $v$,
	we have $-v\,w+y \in K_\psi^\varphi$
	and therefore
	\begin{align}
	\label{Nonnegativity}
	\xi^\varphi(w)
	=\langle \xi,-v\,w+y-y\rangle
	\geq 0.
	\end{align}
	Since $\xi^\varphi$ is linear,
	\cref{Nonnegativity}
	holds for all $w \in H_0^1(\Omega)_+ \cap L^\infty(\Omega)$.
	In a similar fashion,
	we can show that $\xi_\psi$ is nonnegative on 
	$H_0^1(\Omega)_+ \cap L^\infty(\Omega)$.
	
	In particular, $\xi_\psi$, $\xi^\varphi$
	are nonnegative linear forms on $H_0^1(\Omega)\cap C_{\mathrm{c}}(\Omega)$. 
	By \cite[Lem.~6.53]{BonnansShapiro},
	$\xi_\psi$ and $\xi^\varphi$
	have unique nonnegative continuous extensions
	over $C_{\mathrm{c}}(\Omega)$,
	also denoted by $\xi_\psi$, respectively $\xi^\varphi$.
	Moreover,
	by \cite[Thm.~6.54]{BonnansShapiro},
	there are unique nonnegative, regular, locally finite Borel measures $\tilde{\xi}_\psi$, $\tilde{\xi}^\varphi$
	such that
	\begin{align*}
	\xi_\psi(w)=\int_\Omega w\,\diff\tilde{\xi}_\psi
	\quad
	\text{ and }
	\quad
	\xi^\varphi(w)=\int_\Omega w\,\diff\tilde{\xi}^\varphi
	\end{align*}
	holds for all $w \in C_{\mathrm{c}}(\Omega)$.\\

	ad 2.:
	Now, we modify the proof of \cite[Lem.~6.55]{BonnansShapiro}
	to show that for a Borel set $A \subseteq \Omega$, 
	$\operatorname{cap}(A)=0$
	implies $\tilde{\xi}_\psi(A)=\tilde{\xi}^\varphi(A)=0$.
	W.l.o.g.\
	we prove the statement for $\tilde{\xi}^\varphi$.
	
	Let $(\varepsilon_n)_{n \in \mathbb{N}} \subseteq \mathbb{R}_+$ be a sequence with
	$\varepsilon_n \to 0$ as $n \to \infty$.
	Fix $n \in \mathbb{N}$.
	Then we find an open superset $A_n$
	of $A$ in $\Omega$ with 
	$\operatorname{cap}(A_n)<\varepsilon_n$.
	Furthermore, 
	by \cite[Lem.~3.4]{HarderWachsmuth},
	there is $u_n \in H_0^1(\Omega)_+$ 
	satisfying $u_n=1$ q.e.\ on 
	$A_n$
	as well as $\|u_n\|_{H_0^1(\Omega)}^2=\operatorname{cap}(A_n)<\varepsilon_n$.
	Moreover, 
	we can assume $u_n \in L^\infty(\Omega)$ and $0 \leq u_n \leq 1$,
	since $\min(z,1) \in H_0^1(\Omega)$ satisfies $\|\min(z,1)\|_{H_0^1(\Omega)} \leq \|z\|_{H_0^1(\Omega)}$ for $z \in H_0^1(\Omega)$.
	By regularity of $ \tilde{\xi}^\varphi$,
	we can find a compact set $K_n\subseteq A_n$
	satisfying $\tilde{\xi}^\varphi(A_n)\leq\tilde{\xi}^\varphi(K_n)+\varepsilon_n$.
	Using a smooth version of Urysohn's lemma,
	there exists a function $g_n \in H_0^1(\Omega) \cap C_{\mathrm{c}}(\Omega)$ 
	with values in $[0,1]$
	satisfying $g_n=1$ on $K_n$
	and having compact support in $A_n$.
	Then we have $1_{K_n}\leq g_n \leq u_n$ q.e.\ on $\Omega$.
	
	Now, we conclude
	\begin{align*}
	\tilde{\xi}^\varphi(A_n)
	&\leq \tilde{\xi}^\varphi(K_n)+\varepsilon_n
	\leq \int_\Omega g_n \,\diff\tilde{\xi}^\varphi+\varepsilon_n
	=\langle \xi,-v\,g_n\rangle+\varepsilon_n\\
	&\leq \langle \xi,-v\,u_n\rangle+\varepsilon_n
	\leq \|\xi\|_{H^{-1}(\Omega)}\|v\,u_n\|_{H_0^1(\Omega)}+\varepsilon_n.
	\end{align*}
	Using \cref{Lem:ApplicationDominatedConvergence},
	we know that $\|v\,u_n\|_{H_0^1(\Omega)} \to 0$.
	Now,
	$\bigcap_{n \in \mathbb{N}} A_n$ is Borel measurable and
	\begin{align*}
	\tilde{\xi}^\varphi\left(\bigcap_{n \in \mathbb{N}} A_n\right)=0.
	\end{align*}
	Since $A \subseteq \bigcap_{n \in \mathbb{N}} A_n$,
	we conclude $\tilde{\xi}^\varphi(A)=0$.\\
	
	ad 3.:
	Now,
	we argue in a similar fashion as in \cite[Lem.~6.56]{BonnansShapiro} 
	to show that each $w \in H_0^1(\Omega) \cap L^\infty(\Omega)$
	satisfies
	\begin{align*}
	\langle \xi,w\rangle=\int_{\Omega} w\,\diff\tilde{\xi}_\psi-\int_{\Omega} w\,\diff\tilde{\xi}^\varphi.
	\end{align*}
	Let $w \in H_0^1(\Omega) \cap L^\infty(\Omega)$.
	Then we find $(\overline{w}_n)_{n \in \mathbb{N}} \subseteq C_{\mathrm{c}}^\infty(\Omega)$ with $\overline{w}_n \to w$ in $H_0^1(\Omega)$.
	Definig
	$w_n:=\max(-\|w\|_{L^\infty(\Omega)},\min(\overline{w}_n,\|w\|_{L^\infty(\Omega)}))$
	we have $w_n \in H^1_0(\Omega) \cap C_{\mathrm{c}}(\Omega)$ and $w_n \to
	w$ in $H^1_0(\Omega)$ as well as $|w_n| \leq \|w\|_{L^\infty(\Omega)}$.
	Then, \cref{Lem:ApplicationDominatedConvergence} yields $v\,w_n \to v\,w$ in $H^1_0(\Omega)$. Therefore, using $\||z|\|_{H_0^1(\Omega)} = \|z\|_{H_0^1(\Omega)}$ for all $z \in H_0^1(\Omega)$, see \cite[Cor.~5.8.1]{AttouchButtazzoMichaille}, 
	with
	\begin{align*}
	\| -v \,| w_n - w_m| \|_{H^1_0(\Omega)} &= \big \| |-v\,|w_n-w_m||\big
	\|_{H^1_0(\Omega)} = \|v\,w_n - v\,w_m\|_{H^1_0(\Omega)}
	\intertext{
		and
	}
	\|w_n-w_m\|_{L^1(\tilde{\xi}^\varphi)} &= \langle \xi, -v\,|w_n-w_m|\rangle
	\leq \|\xi\|_{H^{-1}(\Omega)} \|-v\,|w_n-w_m| \|_{H^1_0(\Omega)}
	\end{align*}
	we find that $(w_n)_{n \in \mathbb N}$ is a Cauchy sequence in
	$L^1(\tilde{\xi}^\varphi)$. Similarly, $(w_n)_{n \in \mathbb N}$ is a
	Cauchy sequence in $L^1(\tilde{\xi}_\psi)$. Thus, a subsequence converges
	pointwise $\tilde{\xi}^\varphi$-a.e. and $\tilde{\xi}_\psi$-a.e. to an
	element in $L^1(\tilde{\xi}^\varphi) \cap L^1(\tilde{\xi}_\psi)$.
	Now, pick a subsubsequence that converges pointwise q.e. to $w$. Then, by the second statement of
	\cref{Lem:SignedMeasure} and since
	the convergence holds except on a Borel set of capacity zero, $w \in L^1(\tilde{\xi}^\varphi)\cap
	L^1(\tilde{\xi}_\psi)$ and Lebesgue's dominated convergence theorem
	implies
	\begin{align*}
	\langle \xi^\varphi,w\rangle &= \underbrace{\langle
		\xi,-v(w-w_n)\rangle}_{\to 0 \text{ as }n \to \infty} +
	\underbrace{\int_{\Omega} w_n \,\diff\tilde{\xi}^\varphi}_{\to
		\int_{\Omega} w \,\diff\tilde{\xi}^\varphi \text{ along a subsequence}}.
	\end{align*}
	This yields $\langle \xi^\varphi,w\rangle = \int_{\Omega} w \,
	d\tilde{\xi}^\varphi$ and, similarly, $\langle \xi_\psi,w\rangle =
	\int_{\Omega} w \,\diff\tilde{\xi}_\psi$.\\

	ad 4.:
	We modify the proof of
	\cite[Prop.~2.5]{WachsmuthStrongStationarity}.
	We consider a smooth cut-off function
	$\chi \in C_{\mathrm{c}}^\infty(\Omega)$
	with $0 \leq \chi \leq 1$
	and $\chi=1$
	on a compact set $K \subseteq \Omega$.
	
	We define
	$w:=\chi\left[(1-v)\,\psi+v\,y\right]+(1-\chi)\,y$
	and obtain
	$w \in K_\psi^\varphi$.
	This implies
	\begin{align*}
	0 &\leq 
	\langle \xi,w-y\rangle
	=\langle \xi,\chi\,(1-v)\,\psi+\chi\, v\,y-\chi\, y\rangle\\
	&=\langle \xi,(1-v)\,\chi\,(\psi-y)\rangle
	=\int_\Omega \chi\,(\psi-y)\,\diff\tilde{\xi}_\psi.
	\end{align*}
	Since $\chi\,(\psi-y)\leq 0$ q.e.\ on $\Omega$,
	and thus, by the second statement of the theorem, $\tilde{\xi}_\psi$-a.e.,
	we conclude $y=\psi$ $\tilde{\xi}_\psi$-a.e.\ on $K$.
	Covering $\Omega$ with countably many compact subsets,
	we infer
	$y-\psi=0$ $\tilde{\xi}_\psi$-a.e.\ on $\Omega$.
	Similarly, we can show $\varphi-y=0$ $\tilde{\xi}^\varphi$-a.e.\ on $\Omega$.\\

	ad 5.:
	Assume $w \in H_0^1(\Omega) \cap L^1(\tilde{\xi}_\psi)$.
	We approximate $w$ in $H_0^1(\Omega)$ by $(w_n)_{n \in \mathbb{N}}$
	defined via $w_n:=\max(-n,\min(n,w))$.
	Then we have $w_n \to w$ in $H_0^1(\Omega)$
	and $w_n \to w$ pointwise $\tilde{\xi}_\psi$-a.e.\
	(after choosing a subsequence).
	Since $|w_n|\leq |w|$ and since $w \in L^1(\tilde{\xi}_\psi)$,
	we apply Lebesgue's dominated convergence theorem
	and obtain $w_n \to w$ in $L^1(\tilde{\xi}_\psi)$.

	From
	\begin{align*}
	\|w_n-w_m\|_{L^1(\tilde{\xi}^\varphi)}
	&=\int_\Omega |w_n-w_m|\,\diff\tilde{\xi}^\varphi
	=\int_\Omega |w_n-w_m|\,\diff\tilde{\xi}_\psi-\langle \xi,|w_n-w_m|\rangle\\
	&\leq \|w_n-w_m\|_{L^1(\tilde{\xi}_\psi)}+\|\xi\|_{H^{-1}(\Omega)}\|w_n-w_m\|_{H_0^1(\Omega)}
	\end{align*}
	it follows that $(w_n)_{n \in \mathbb{N}}$ is a Cauchy sequence in $L^1(\tilde{\xi}^\varphi)$
	and we can again conclude that $w_n \to w$ in $L^1(\tilde{\xi}^\varphi)$.
	
	From the representation
	\begin{align*}
	\langle \xi,w_n\rangle=\int_\Omega w_n \,\diff\tilde{\xi}_\psi-\int_\Omega w_n \,\diff\tilde{\xi}^\varphi
	\end{align*}
	for all $n \in \mathbb{N}$,
	since $w_n \to w$ in $H_0^1(\Omega)$, $L^1(\tilde{\xi}_\psi)$ and $L^1(\tilde{\xi}^\varphi)$,
	we conclude
	\begin{align*}
	\langle \xi,w\rangle=\int_\Omega w \,\diff\tilde{\xi}_\psi-\int_\Omega w\,\diff\tilde{\xi}^\varphi.
	\end{align*}
	The opposite statement follows similarly.
\end{proof}

In the subsequent lemma
we assume that the active sets 
$A_\psi(\zeta)$ and $A^\varphi(\zeta)$
have a positive distance.
With this condition
we mean that
there are representative quasi-closed sets Borel measurable
$\tilde{A}_\psi(\zeta)$, $\tilde{A}^\varphi(\zeta)$
such that $\operatorname{dist}(\tilde{A}_\psi(\zeta),\tilde{A}^\varphi(\zeta))>0$
and the two sets coincide with $A_\psi(\zeta), A^\varphi(\zeta)$
up to a set of capacity zero.

\begin{lemma}
	\label{Lem:DistanceActiveSets}
	Assume that $\psi, \varphi$ fulfill the conditions of \cref{AssumptionObstacles}. 
	Let $\zeta \in H^{-1}(\Omega)$ be arbitrary and set
	$y:=S_{\operatorname{id}}(\zeta)$ and $\xi:=Ly-\zeta$. 
	Suppose $A_\psi(\zeta)$ and $A^\varphi(\zeta)$ have a positive distance.
	Then, with $\tilde{\xi}_\psi, \tilde{\xi}^\varphi\in \mathcal{M}_+(\Omega)$ as in \cref{Lem:SignedMeasure},
	it holds $H_0^1(\Omega) \subseteq L^1(\tilde{\xi}_\psi) \cap L^1(\tilde{\xi}^\varphi)$ and
	\begin{align*}
	\langle \xi,w\rangle=\int_\Omega w\,\diff\tilde{\xi}_\psi-\int_\Omega w \,\diff\tilde{\xi}^\varphi
	\end{align*}
	for all $w \in H_0^1(\Omega)$.
\end{lemma}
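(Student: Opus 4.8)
The plan is to use the positive distance between $A_\psi(\zeta)$ and $A^\varphi(\zeta)$ to localize near $A_\psi(\zeta)$, where $\tilde{\xi}^\varphi$ is trivial, and thereby reduce the integrability of an unbounded $w\in H_0^1(\Omega)$ against $\tilde{\xi}_\psi$ to the bounded case already handled in \cref{C5:Lem:SignedMeasure3}, after which the transfer statement \cref{C5:Lem:SignedMeasure5} gives the rest.

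First I would fix Borel, quasi-closed representatives $\tilde{A}_\psi(\zeta),\tilde{A}^\varphi(\zeta)$ of the two active sets with $\delta:=\operatorname{dist}(\tilde{A}_\psi(\zeta),\tilde{A}^\varphi(\zeta))>0$, as provided by the hypothesis, and build the cutoff
\[
\chi(x):=\max\Bigl(0,\min\Bigl(1,\,2-\tfrac{3}{\delta}\operatorname{dist}\bigl(x,\tilde{A}_\psi(\zeta)\bigr)\Bigr)\Bigr),
\]
which is globally Lipschitz on $\mathbb{R}^d$ with $0\le\chi\le1$, satisfies $\chi\equiv1$ on $\{\operatorname{dist}(\cdot,\tilde{A}_\psi(\zeta))\le\delta/3\}$ and $\chi\equiv0$ on $\{\operatorname{dist}(\cdot,\tilde{A}_\psi(\zeta))\ge 2\delta/3\}\supseteq\tilde{A}^\varphi(\zeta)$. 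Since multiplication by a bounded Lipschitz function is a bounded linear operator on $H_0^1(\Omega)$ (use $\nabla(\chi w)=w\nabla\chi+\chi\nabla w$ and the Poincar\'e inequality), $w\mapsto\chi w$ maps $H_0^1(\Omega)$ continuously into itself, even though $\chi$ need not be compactly supported in $\Omega$. By \cref{C5:Lem:SignedMeasure4} together with \cref{C5:Lem:SignedMeasure2}, $\tilde{\xi}_\psi$ is concentrated on $\tilde{A}_\psi(\zeta)$ and $\tilde{\xi}^\varphi$ on $\tilde{A}^\varphi(\zeta)$; hence $\chi=1$ $\tilde{\xi}_\psi$-a.e.\ and $\chi=0$ $\tilde{\xi}^\varphi$-a.e.

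Next I would prove $w\in L^1(\tilde{\xi}_\psi)$ for every $w\in H_0^1(\Omega)_+$; the general case follows by applying this to $|w|\in H_0^1(\Omega)_+$. For $w\in H_0^1(\Omega)_+$ set $w_k:=\min(w,k)\in H_0^1(\Omega)\cap L^\infty(\Omega)$, so that $\chi w_k\in H_0^1(\Omega)\cap L^\infty(\Omega)$ and \cref{C5:Lem:SignedMeasure3} gives
\[
\langle\xi,\chi w_k\rangle=\int_\Omega\chi w_k\,\diff\tilde{\xi}_\psi-\int_\Omega\chi w_k\,\diff\tilde{\xi}^\varphi=\int_\Omega\chi w_k\,\diff\tilde{\xi}_\psi,
\]
the $\tilde{\xi}^\varphi$-integral vanishing because $\chi=0$ $\tilde{\xi}^\varphi$-a.e. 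Letting $k\to\infty$, $w_k\to w$ in $H_0^1(\Omega)$, hence $\chi w_k\to\chi w$ in $H_0^1(\Omega)$ and $\langle\xi,\chi w_k\rangle\to\langle\xi,\chi w\rangle$; since $0\le\chi w_k\uparrow\chi w$ pointwise $\tilde{\xi}_\psi$-a.e., monotone convergence yields $\int_\Omega\chi w\,\diff\tilde{\xi}_\psi=\langle\xi,\chi w\rangle<\infty$, and because $\chi=1$ $\tilde{\xi}_\psi$-a.e.\ this equals $\int_\Omega w\,\diff\tilde{\xi}_\psi$. Thus $H_0^1(\Omega)\subseteq L^1(\tilde{\xi}_\psi)$, and \cref{C5:Lem:SignedMeasure5} then delivers $H_0^1(\Omega)\subseteq L^1(\tilde{\xi}^\varphi)$ together with $\langle\xi,w\rangle=\int_\Omega w\,\diff\tilde{\xi}_\psi-\int_\Omega w\,\diff\tilde{\xi}^\varphi$ for all $w\in H_0^1(\Omega)$.

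I expect the main obstacle to be the bookkeeping around exceptional sets: the active sets are defined only up to capacity zero, so one must work throughout with the Borel quasi-closed representatives furnished by the hypothesis and with Borel measurable quasi-continuous representatives of $w$, $\psi$ and $\varphi$ (available by \cref{Lem:Capacity}), and invoke \cref{C5:Lem:SignedMeasure2} to ensure that neither measure charges the discrepancy between a set and its representative, so that statements such as ``$\chi=1$ $\tilde{\xi}_\psi$-a.e.'' and the pointwise monotone convergence are legitimate. Verifying that $w\mapsto\chi w$ is well defined and continuous on $H_0^1(\Omega)$ despite $\chi$ not being compactly supported in $\Omega$ is routine but should be stated carefully.
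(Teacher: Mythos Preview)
Your proposal is correct and follows essentially the same idea as the paper: introduce a cutoff that equals $1$ on a neighborhood of $A_\psi(\zeta)$ and vanishes near $A^\varphi(\zeta)$, use the representation for bounded elements together with the concentration properties of $\tilde\xi_\psi,\tilde\xi^\varphi$ from \cref{C5:Lem:SignedMeasure4}, and then pass to the limit along truncations. The only minor differences are that the paper takes a $C^\infty$ cutoff $v_2$ and runs a Cauchy-sequence argument in $L^1(\tilde\xi_\psi)$ and $L^1(\tilde\xi^\varphi)$ symmetrically, whereas you use a Lipschitz cutoff, monotone convergence on the $\tilde\xi_\psi$ side, and then invoke \cref{C5:Lem:SignedMeasure5} to transfer to $\tilde\xi^\varphi$ and obtain the representation---a slightly more economical route.
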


\begin{proof}
	Since the active sets have a positive distance $C>0$,
	we can find an element $v_2$
	satisfying $v_2 \in C^\infty(\mathbb{R}^d)$
	as well as $v_2=1$ on $A_\psi(\zeta)$ and $v_2=0$
	outside $A_\psi(\zeta)+B_{C/2}$.
	
	Since $v_2$ is smooth,
	we have $v_2\,w, (v_2-1)\,w \in H_0^1(\Omega)$ for all $w \in H_0^1(\Omega)$
	and
	we define the functionals
	\begin{align*}
	\xi_\psi^{2} \colon w \mapsto \langle \xi, v_2\,w\rangle,
	\quad
	\xi^\varphi_{2} \colon w \mapsto \langle \xi, (v_2-1)\,w\rangle
	\end{align*}
	on $H_0^1(\Omega)$.
	Since $v_2$ is in $C^\infty(\mathbb{R}^d)$,
	it is easy to show that $\xi_\psi^{2}$ and $\xi^\varphi_{2}$
	are bounded linear functionals on $H_0^1(\Omega)$.
	Moreover, we have
	\begin{align*}
	\langle \xi,w\rangle
	=\langle \xi_\psi^{2},w\rangle-\langle \xi^\varphi_{2},w\rangle
	\end{align*}
	for all $w \in H_0^1(\Omega)$.
	
	Assume first $w$ is in $H_0^1(\Omega) \cap L^\infty(\Omega)$.
	Then we have
	\begin{align*}
	\int_{\Omega} w \,\diff\tilde{\xi}_\psi
	&=\int_{\Omega} v_2\, w \,\diff\tilde{\xi}_\psi
	=\int_{\Omega} v_2 \,w \,\diff\tilde{\xi}_\psi-\int_{\Omega} v_2 \,w \,\diff\tilde{\xi}^\varphi
	=\langle \xi,v_2\, w\rangle
	=\langle\xi_\psi^{2},w\rangle.
	\end{align*}
	Here,
	the first equation holds since $\tilde{\xi}_\psi(I_\psi(\zeta))=0$
	and $w=v_2\,w$ q.e.\ and thus $\tilde{\xi}_\psi$-a.e.\ on $A_\psi(\zeta)$,
	see \cref{Lem:SignedMeasure}.
	Similarly, the second equation holds
	since $v_2\,w=0$ $\tilde{\xi}^\varphi$-a.e.\ on $\Omega$.
	
	Let $w \in H_0^1(\Omega)$.
	Now we have $\max(-n,\min(w,n)) \in H_0^1(\Omega) \cap L^\infty(\Omega)$
	and $w_n \to w$ in $H_0^1(\Omega)$.
	Furthermore,
	\begin{align*}
	\|w_n-w_m\|_{L^1(\tilde{\xi}_\psi)}
	=\int_{\Omega} |w_n-w_m|\,\diff\tilde{\xi}_\psi
	=\langle \xi_\psi^{2},|w_n-w_m|\rangle
	\leq \|\xi_\psi^{2}\|_{H^{-1}}\||w_n-w_m|\|_{H_0^1}.
	\end{align*}
	Thus,
	$(w_n)_{n \in \mathbb{N}}$ is a Cauchy sequence in $L^1(\tilde{\xi}_\psi)$.
	Since $w_n \to w$ pointwise q.e.\ and thus $\tilde{\xi}_\psi$-a.e.,
	see \cref{Lem:SignedMeasure},
	we have
	$w_n \to w$ in $L^1(\tilde{\xi}_\psi)$.
	Arguing for $\tilde{\xi}^\varphi$ in a similar fashion,
	we obtain that
	\begin{align*}
	\langle \xi,w\rangle
	=\langle \xi_\psi^{2},w\rangle-\langle \xi^\varphi_{2},w\rangle
	=\int_{\Omega} w \,\diff\tilde{\xi}_\psi-\int_{\Omega} w \,\diff\tilde{\xi}^\varphi.
	\end{align*}
\end{proof}

The following example shows that, in general,
i.e., when the active sets do not have a positive distance,
the characterization of the functional $LS_{\operatorname{id}}(\zeta)-\zeta$ as 
the difference of the two measures $\tilde{\xi}_\psi$ and $\tilde{\xi}^\varphi$
does not need to apply for all possible arguments in $H_0^1(\Omega)$.
A related example 
which is
independent from the connection to the bilateral obstacle problem
can be found in \cite[App.~2]{WachsmuthPointwiseConstraints}.

\begin{example}
	\label{ex:counterexample}
	For $d=2$ and for $0<\beta<\tfrac{1}{2}$, consider the function
	\begin{equation}\label{ydef}
	y(x)=\begin{cases} \sin((-\ln(|x|))^{\beta}), \quad &x\in \Omega:=B_{\rho}(0) \setminus \{0\}\\ 0, \quad &x=0
	\end{cases},\quad
	\rho=\exp(-\pi^{1/\beta})<1.
	\end{equation}
	Then $y|_{\partial B_{\rho}(0)}=\sin(\pi)=0$ and $y\in
	H_0^1(\Omega)\cap
	L^\infty(\Omega)$, since $|y|\leq 1$ as well as
	\begin{align*}
	|\nabla y(x)|^2 
	&=\frac{\cos^2((- \ln(|x|))^\beta) \beta^2
		(-\ln(|x|))^{2\beta-2}
	}{
		|x|^2} \frac{x_1^2+x_2^2}{|x|^2}
	\le
	\beta^2 (-\ln(|x|))^{2\beta-2} |x|^{-2}
	\intertext{and thus}
	\|y\|_{H^1_0(\Omega)}^2&= \|\nabla y\|_{L^2(\Omega)}^2
	\le 2 \pi \beta^2 \int_0^{\rho}
	(-\ln(r))^{2\beta-2}  r^{-2} r \,\diff r
	=
	2 \pi \beta^2 \frac{(-\ln(r))^{2\beta-1}}{1-2\beta} \Big |_{0}^\rho
	<\infty,
	\end{align*}
	since $\beta<\tfrac 12$.
	Now, we consider the obstacles given by $\psi(x):
	=\min\left(-\tfrac{1}{2},y(x)\right)$,
	$\varphi(x):=\max\left(\tfrac{1}{2},y(x)\right)$.
	
	We have
	\begin{align*}
	(-\ln(r(t)))^\beta=t\quad
	\Leftrightarrow \quad r(t)=\exp(-t^{1/\beta})
	\end{align*}
	and, for $k \in \mathbb{N}$, we set
	$r_k^\pm:=r(2k\pi\pm \pi/2)$.
	This choice implies
	$\rho=r(\pi)>r_1^->r_1^+>r_2^->r_2^+>\ldots>0$
	and
	$y(r_k^\pm (\cos t,\sin t))=\pm 1$
	for all $t \in (0,2\pi)$.

	Now, let $\omega_k>0$ be weights (that will be adjusted below), with
	$\sum_{k=1}^\infty \omega_k^2<\infty$
	and consider the functional
	\begin{align}
	\label{Def:Functional}
	\langle \xi,w\rangle
	:=\sum_{k=1}^\infty \frac{\omega_k}{\sqrt{\ln(r_k^-/r_k^+)}}
	\int_0^{2\pi} (w(r_k^- (\cos t,\sin t))-w(r_k^+ (\cos t,\sin
	t)))\,\diff t
	\end{align}
	for $w\in H_0^1(\Omega)$.
	Note that the integral in \cref{Def:Functional}
	is well-defined.
	To see this, observe first
	that the quasi-continuous Borel measurable representatives of $w$ are unique up to a
	set of capacity zero.
	Let now $E \subseteq \Omega$ be a Borel set of capacity zero.
	Then,
	for any radius $0<R<\rho$,
	by \cite[Thm.~7.5]{Helms},
	the surface measure $\sigma_R$ on the sphere $\partial B_R$
	satisfies $\sigma_R(E\cap \partial B_R)=0$. 
	
	We have $\xi\in H^{-1}(\Omega)$, since
	\begin{align*}
	&|\langle \xi,w\rangle|\le \sum_{k=1}^\infty
	\frac{\omega_k}{\sqrt{\ln(r_k^-/r_k^+)}}
	\int_0^{2\pi} \int_{r_k^+}^{r_k^-} |\nabla w(s (\cos t,\sin t))|
	\sqrt{s} \frac{1}{\sqrt{s}} \,\diff s\,\diff t\\
	&\le\sum_{k=1}^\infty \frac{\omega_k}{\sqrt{\ln(r_k^-/r_k^+)}}
	\|\nabla w\|_{L^2(B_{r_k^-}(0)\setminus B_{r_k^+}(0))}
	\left(\int_0^{2\pi} \int_{r_k^+}^{r_k^-} \frac{1}{s} \,\diff
	s\,\diff t\right)^{\frac{1}{2}}\\
	&=\sqrt{2\pi}\sum_{k=1}^\infty \omega_k \|\nabla
	w\|_{L^2(B_{r_k^-}(0)\setminus
		B_{r_k^+}(0))}\\
	&\le \sqrt{2\pi} \left(\sum_{k=1}^\infty \omega_k^2
	\right)^{\frac{1}{2}}
	\left(\sum_{k=1}^\infty \|\nabla w\|_{L^2(B_{r_k^-}(0)\setminus
		B_{r_k^+}(0))}^2 \right)^{\frac{1}{2}}\\
	&\le \sqrt{2\pi} \left(\sum_{k=1}^\infty \omega_k^2
	\right)^{\frac{1}{2}} \|\nabla w\|_{L^2(\Omega)}.
	\end{align*}
	Here, we have used the Cauchy Schwarz inequality and the fact
	that, for all $k \in \mathbb{N}$, the sets
	$B_{r_k^-}(0)\setminus
	B_{r_k^+}(0)$ are disjoint subsets of $\Omega$.
	
	For
	$w\in H_0^1(\Omega)\cap L^\infty(\Omega)$, we have
	\begin{align*}
	\langle \xi,w\rangle&=\int_\Omega w
	\,\diff\tilde{\xi}_\psi-\int_\Omega w \,\diff\tilde{\xi}^\varphi,\\
	&=\int_{\Omega} w \,\diff \left(\sum_{k=1}^\infty
	\frac{\omega_k}{\sqrt{\ln(r_k^-/r_k^+)}r_k^-}\sigma_{r_k^-}\right)
	-\int_{\Omega} w \,\diff \left(\sum_{k=1}^\infty
	\frac{\omega_k}{\sqrt{\ln(r_k^-/r_k^+)}r_k^+}\sigma_{r_k^+}\right)
	\end{align*}
	where $\tilde{\xi}_\psi, \tilde{\xi}^{\varphi}$ are nonnegative
	finite measures with support in $\{y=\psi\}$ and $\{y=\varphi\}$, respectively.
	In fact, to show that $\tilde{\xi}^\varphi$ is a finite measure, we observe
	that
	\begin{align}\label{C5:lnrest}
	\ln(r_k^-/r_k^+)=
	(2k\pi+\pi/2)^{1/\beta}-(2k\pi-\pi/2)^{1/\beta}
	\begin{cases} \ge (2k\pi-\pi/2)^{1/\beta-1} \pi/\beta,\\
	\le (2k\pi+\pi/2)^{1/\beta-1} \pi/\beta.
	\end{cases}
	\end{align}
	Hence, for any $w\in C(\overline\Omega)$ with $0\le w\le 1$
	\begin{align*}
	&0\le \int_\Omega w \,\diff\tilde{\xi}^\varphi\le
	\sum_{k=1}^\infty \frac{\omega_k}{\sqrt{\ln(r_k^-/r_k^+)}}
	\int_0^{2\pi} 1 \,\diff t
	\le 2\pi \left(\sum_{k=1}^\infty \omega_k^2 \right)^{\frac{1}{2}}
	\left(\sum_{k=1}^\infty \frac{1}{\ln(r_k^-/r_k^+)}
	\right)^{\frac{1}{2}}\\
	&\le 2\pi \left(\sum_{k=1}^\infty \omega_k^2 \right)^{\frac{1}{2}}
	\left(\sum_{k=1}^\infty \frac{1}{\pi/\beta
		(2k\pi-\pi/2)^{1/\beta-1}}\right)^{\frac{1}{2}}
	\le C
	\end{align*}
	with a constant $C>0$, since $\beta <1/2$.

	Set $\zeta:=Ly-\xi$.
	We argue that the function $y \in K_\psi^\varphi$ as defined in
	\cref{ydef} satisfies
	the bilateral obstacle problem \cref{BOP} for $f=\operatorname{id}$,
	i.e., $y=S_{\operatorname{id}}(\zeta)$. First note that $y \in
	K_\psi^\varphi \subseteq H^1_0(\Omega)\cap L^\infty(\Omega)$ by the choice
	of $y,\psi$ and $\varphi$. Now, let $z \in K_\psi^\varphi$ be arbitrary.
	Then $z -y \geq 0$ q.e.\ on $\{y=\psi\}$ and $z -y\leq 0$ q.e.\ on $\{y=\varphi\}$.
	Additionally, we have
	\begin{align}
	\label{C5:suppinclusion1}
	\operatorname{supp}(\tilde \xi_\psi) \setminus \{0\} &= \bigcup_{k=1}^\infty \partial B_{r_k^-}
	\subseteq \{y=\psi\}
	\intertext{and}
	\label{C5:suppinclusion2}
	\operatorname{supp}(\tilde \xi^\varphi) \setminus \{0\}&=\bigcup_{k =1}^\infty \partial B_{r_k^+}
	\subseteq \{y=\varphi\}.
	\end{align}
	This yields
	\begin{align*}
	\langle\xi,z-y\rangle \,
	&=\int_{\Omega}
	(z-y) \,\diff \tilde{\xi}_\psi - \int_{\Omega} (z-y) \,\diff
	\tilde{\xi}^\varphi\\
	&= \, \int_{\{y=\psi\}} (z-y) \,\diff \tilde{\xi}_\psi -
	\int_{{\{y=\varphi\}}} (z-y) \,\diff \tilde{\xi}^\varphi \geq 0
	\end{align*}
	and we obtain $y=S_{\operatorname{id}}(\zeta)$.
	
	Note that by \cref{C5:suppinclusion1} and \cref{C5:suppinclusion2} we find
	\begin{align*}
	\operatorname{dist}(A_\psi(\zeta),A^\varphi(\zeta))=\operatorname{dist}(\{y=\psi\},\{y=\varphi\})=0
	\end{align*}
	since $r_k^\pm \searrow 0$. Thus,
	\cref{Lem:DistanceActiveSets}
	does not apply.

	Now, consider the unbounded function
	$w(x)=(-\ln(|x|))^{\beta}-\pi$. Then since
	\begin{align*}
	|\nabla w(x)|^2 = \frac{\beta^2 (-\ln(|x|))^{2\beta-2}}{|x|^2}
	\frac{x_1^2+x_2^2}{|x|^2}
	=\beta^2 (-\ln(|x|))^{2\beta-2} |x|^{-2}
	,
	\end{align*}
	we have
	$w\in H_0^1(\Omega)$ as above.
	With $\omega_k=k^{-1}$, $\beta = \tfrac 13$
	and by using \cref{C5:lnrest},
	we obtain the estimate
	\begin{align*}
	\int_\Omega w \,\diff\tilde \xi^\varphi &=
	\sum_{k=1}^\infty \frac{\omega_k}{\sqrt{\ln(r_k^-/r_k^+)}}
	\int_0^{2\pi} (2k\pi+\tfrac{\pi}{2}-\pi)\,\diff t\\
	&\ge
	\sum_{k=1}^\infty \frac{2\pi
		(2k\pi-\tfrac{\pi}{2})}{k(2k\pi+\tfrac{\pi}{2})\sqrt{3\pi}}
	\ge
	\sum_{k=1}^\infty \frac{2\pi \, \tfrac 32 k \pi}{k \, \tfrac 52 k \pi \,
		\sqrt{3\pi}}=\infty.
	\intertext{
		Similarly, we find
	}
	\int_{\Omega} w \,\diff \tilde{\xi}_\psi &\geq \sum_{k=1}^\infty
	\frac{2\pi \, \tfrac 12 k \pi}{k \, \tfrac 52 k \pi \, \sqrt{3\pi}} =
	\infty.
	\end{align*}
	This shows that, in general, the representation
	\cref{Characterization}
	does not hold for all $w \in H^1_0(\Omega)$.
\end{example}

In the following lemma,
we find a characterization of the critical cone.
In parts,
the proof is based on the proof of
\cite[Lem.~3.1]{WachsmuthStrongStationarity}. 

\begin{lemma}
	\label{Lem:CriticalCone}
	Assume that $\psi, \varphi$ fulfill the conditions of \cref{AssumptionObstacles}.
	Let $\zeta \in H^{-1}(\Omega)$ be arbitrary and
	set $y:=S_{\operatorname{id}}(\zeta)$, $\xi:=Ly-\zeta$.
	Then the critical cone
	\begin{align*}
	\mathcal{K}_{K_\psi^\varphi}(y,\xi):=T_{K_\psi^\varphi}(y)\cap \xi^\perp
	\end{align*}	
	has the following structure. 
	There exist quasi-closed sets
	$A_\psi^{\mathrm{s}}(\zeta) \subseteq A_\psi(\zeta)$
	and
	$A_{\mathrm{s}}^\varphi(\zeta) \subseteq A^\varphi(\zeta)$
	which are unique up to sets of capacity zero
	such that
	\begin{align}
	\label{StructureCriticalCone}
	\begin{split}
	&\mathcal{K}_{K_\psi^\varphi}(y,\xi)
	=\{z \in H_0^1(\Omega) \mid z \geq 0 \text{ q.e.\ in } A_\psi(\zeta), z \leq 0 \text{ q.e.\ in } A^\varphi(\zeta) \text{ and } \langle \xi,z\rangle=0\}\\
	&=\left\{z \in H_0^1(\Omega) \mid z \geq 0\text{ q.e.\ in } A_\psi(\zeta), z \leq 0 \text{ q.e.\ in } A^\varphi(\zeta),
	z=0 ~\tilde{\xi}_\psi \text{- and } \tilde{\xi}^\varphi
	\text{-a.e.}\right\}\\
	&=\left\{z \in H_0^1(\Omega)\mid z \geq 0 \text{ q.e.\ in } A_\psi(\zeta), z \leq 0 \text{ q.e.\ in } A^\varphi(\zeta),\right.\\
	&\qquad\qquad\qquad\qquad\qquad\qquad\qquad\quad~~
	\left. z=0 \text{ q.e.\ in }A_\psi^{\mathrm{s}}(\zeta)\cup
	A_{\mathrm{s}}^\varphi(\zeta)\right\},
	\end{split}
	\end{align}
	where $\tilde{\xi}_\psi, \tilde{\xi}^\varphi$ denote the
	decomposition of $\xi$ according to \cref{Lem:SignedMeasure}.
\end{lemma}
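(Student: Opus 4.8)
The strategy is to establish the three equalities in \cref{StructureCriticalCone} successively, the second one carrying the analytical substance. The first equality is obtained by merely unfolding definitions: by \cref{Tan} the tangent cone $T_{K_\psi^\varphi}(y)$ consists exactly of those $z\in H_0^1(\Omega)$ with $z\ge 0$ q.e.\ on $A_\psi(\zeta)$ and $z\le 0$ q.e.\ on $A^\varphi(\zeta)$, and $\xi^\perp=\{z\in H_0^1(\Omega)\mid\langle\xi,z\rangle=0\}$; intersecting gives the first set on the right-hand side of \cref{StructureCriticalCone}.

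For the inclusion ``$\supseteq$'' in the second equality, let $z\in H_0^1(\Omega)$ obey the two sign conditions and vanish $\tilde{\xi}_\psi$- and $\tilde{\xi}^\varphi$-a.e.\ on $\Omega$. Then $z\in L^1(\tilde{\xi}_\psi)\cap L^1(\tilde{\xi}^\varphi)$, since a Borel function vanishing a.e.\ with respect to a measure is integrable with integral zero, so \cref{C5:Lem:SignedMeasure5} applies and gives $\langle\xi,z\rangle=\int_\Omega z\,\diff\tilde{\xi}_\psi-\int_\Omega z\,\diff\tilde{\xi}^\varphi=0$, i.e.\ $z\in\mathcal{K}_{K_\psi^\varphi}(y,\xi)$. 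The converse ``$\subseteq$'' is the heart of the matter. I would take $z$ in the critical cone and split $z=z_+-z_-$. From $z\ge 0$ q.e.\ on $A_\psi(\zeta)$ one gets $z_-=0$ q.e.\ on $A_\psi(\zeta)$; since $\tilde{\xi}_\psi$ is concentrated on $A_\psi(\zeta)$ (because $\tilde{\xi}_\psi(I_\psi(\zeta))=0$ by \cref{C5:Lem:SignedMeasure4}) and does not charge sets of capacity zero (\cref{C5:Lem:SignedMeasure2}), this yields $z_-=0$ $\tilde{\xi}_\psi$-a.e.; symmetrically $z_+=0$ $\tilde{\xi}^\varphi$-a.e. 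Hence $z_-\in L^1(\tilde{\xi}_\psi)$ and $z_+\in L^1(\tilde{\xi}^\varphi)$, so \cref{C5:Lem:SignedMeasure5}, used once in each of its two forms, shows that $z_+$ and $z_-$ lie in $L^1(\tilde{\xi}_\psi)\cap L^1(\tilde{\xi}^\varphi)$ with
\[
\langle\xi,z_+\rangle=\int_\Omega z_+\,\diff\tilde{\xi}_\psi,\qquad
\langle\xi,z_-\rangle=-\int_\Omega z_-\,\diff\tilde{\xi}^\varphi .
\]
Consequently $0=\langle\xi,z\rangle=\int_\Omega z_+\,\diff\tilde{\xi}_\psi+\int_\Omega z_-\,\diff\tilde{\xi}^\varphi$, and since both integrands are nonnegative, $z_+=0$ $\tilde{\xi}_\psi$-a.e.\ and $z_-=0$ $\tilde{\xi}^\varphi$-a.e.; combined with the vanishing already established, $z=0$ $\tilde{\xi}_\psi$-a.e.\ and $\tilde{\xi}^\varphi$-a.e. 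I expect this passage to be the main obstacle: because \cref{ex:counterexample} shows that $\xi$ does in general \emph{not} act as the difference $\tilde{\xi}_\psi-\tilde{\xi}^\varphi$ on every element of $H_0^1(\Omega)$, one cannot split $\langle\xi,z\rangle$ into two integrals for an arbitrary $z$ from the tangent cone, so the sign information built into the tangent cone must first be exploited, via \cref{C5:Lem:SignedMeasure5}, to secure integrability of $z_+$ and $z_-$ separately.

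For the third equality I would perform, for each of the nonnegative Radon measures $\tilde{\xi}_\psi$ and $\tilde{\xi}^\varphi$ (which by \cref{C5:Lem:SignedMeasure2} vanish on sets of capacity zero), the construction of a strictly active set as in \cite[App.~A]{WachsmuthStrongStationarity}: there is a quasi-closed set $A_\psi^{\mathrm{s}}(\zeta)$, unique up to a set of capacity zero, such that for every $z\in H_0^1(\Omega)$ one has $z=0$ $\tilde{\xi}_\psi$-a.e.\ if and only if $z=0$ q.e.\ on $A_\psi^{\mathrm{s}}(\zeta)$, and likewise a quasi-closed set $A_{\mathrm{s}}^\varphi(\zeta)$ for $\tilde{\xi}^\varphi$. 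Testing this characterizing property with a function $v\in H_0^1(\Omega)_+$ satisfying $\{\tilde v>0\}=I_\psi(\zeta)$, which exists by \cref{Lem:Capacity}, and using $\tilde{\xi}_\psi(I_\psi(\zeta))=0$ forces $A_\psi^{\mathrm{s}}(\zeta)\subseteq A_\psi(\zeta)$ up to capacity zero, and the analogous argument gives $A_{\mathrm{s}}^\varphi(\zeta)\subseteq A^\varphi(\zeta)$; uniqueness follows by the same kind of test with a function whose positivity set is the complement of the set in question. Finally, vanishing q.e.\ on $A_\psi^{\mathrm{s}}(\zeta)$ and q.e.\ on $A_{\mathrm{s}}^\varphi(\zeta)$ is the same as vanishing q.e.\ on $A_\psi^{\mathrm{s}}(\zeta)\cup A_{\mathrm{s}}^\varphi(\zeta)$, so combining the two equivalences transforms the second set in \cref{StructureCriticalCone} into the third, completing the proof. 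Apart from the integrability subtlety above, the remaining steps are routine manipulations with capacities and the properties collected in \cref{Lem:SignedMeasure}.
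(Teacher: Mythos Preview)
Your proof is correct, and for the crucial second equality it takes a genuinely different route from the paper. The paper invokes polyhedricity of $K_\psi^\varphi$ (i.e., $T_{K_\psi^\varphi}(y)\cap\xi^\perp=\overline{R_{K_\psi^\varphi}(y)\cap\xi^\perp}$) to approximate any $z$ in the critical cone by a sequence $(z_n)$ from the radial cone; since $\psi,\varphi\in L^\infty(\Omega)$, each $z_n$ is bounded, so \cref{C5:Lem:SignedMeasure3} applies directly to $z_n$, the sign conditions force $z_n=0$ $\tilde{\xi}_\psi$- and $\tilde{\xi}^\varphi$-a.e., and the conclusion for $z$ follows by pointwise q.e.\ convergence of a subsequence. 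Your argument instead bypasses polyhedricity entirely: splitting $z=z_+-z_-$, you first extract from the tangent-cone sign conditions that $z_-=0$ $\tilde{\xi}_\psi$-a.e.\ and $z_+=0$ $\tilde{\xi}^\varphi$-a.e., and then feed this into \cref{C5:Lem:SignedMeasure5} (once in each direction) to obtain integrability of $z_\pm$ and the representation $\langle\xi,z\rangle=\int z_+\,\diff\tilde{\xi}_\psi+\int z_-\,\diff\tilde{\xi}^\varphi$, from which the conclusion drops out. This is arguably cleaner: it exploits \cref{C5:Lem:SignedMeasure5} more fully and does not rely on the external polyhedricity result \cite[Thm.~3.2]{Mignot}, whereas the paper's approach follows the more classical pattern of reducing to bounded test functions where the measure representation is unproblematic. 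For the third equality you cite \cite[App.~A]{WachsmuthStrongStationarity} where the paper cites \cite[Thm.~1]{Stollmann}, and you show $A_\psi^{\mathrm{s}}(\zeta)\subseteq A_\psi(\zeta)$ by testing with a $v$ having $\{v>0\}=I_\psi(\zeta)$ while the paper tests with $y-\psi$; both arguments are valid and essentially equivalent.
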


\begin{proof}
	Recalling \cref{Tan},
	we see that the first equation in \cref{StructureCriticalCone} holds.
	
	We show the second identity in \cref{StructureCriticalCone}.
	Assume $z$ is an element of $\mathcal{K}_{K_\psi^\varphi}(y,\xi)$.
	By polyhedricity,
	see e.g.\ \cite[Thm.~3.2]{Mignot},
	we have
	\begin{align*}
	T_{K_\psi^\varphi}(y)\cap \xi^\perp
	=\overline{R_{K_\psi^\varphi}(y)\cap \xi^\perp},
	\end{align*}
	where $R_{K_\psi^\varphi}(y):=\{z \in H_0^1(\Omega)\mid \exists t>0, y+tz \in K_\psi^\varphi\}$ denotes the radial cone.
	Thus,
	there is a sequence $(z_n)_{n \in \mathbb{N}} \subseteq R_{K_\psi^\varphi}(y)\cap\xi^\perp$
	with $z_n \to z$ in $H_0^1(\Omega)$.
	Since $\psi, \varphi$ are elements of $L^\infty(\Omega)$,
	and by the structure of $R_{K_\psi^\varphi}(y)$
	we conclude $(z_n)_{n \in \mathbb{N}} \subseteq L^\infty(\Omega)$.
	Using \cref{Lem:SignedMeasure},
	each $z_n$ is integrable with respect to $\tilde{\xi}_\psi$ and $\tilde{\xi}^\varphi$.
	Let $n \in \mathbb{N}$ be fixed.
	By $R_{K_\psi^\varphi}(y)\subseteq T_{K_\psi^\varphi}(y)$
	we have $z_n \geq 0$ q.e.\ in $A_\psi(\zeta)$
	and $z_n \leq 0$ q.e.\ in $A^\varphi(\zeta)$.
	This implies
	\begin{align}
	\label{C5:Hilfsresultat2}
	0=\langle \xi,z_n\rangle=\int_\Omega z_n \,\diff\tilde{\xi}_\psi - \int_{\Omega} z_n \,\diff\tilde{\xi}^\varphi
	=\int_{A_\psi(\zeta)} z_n \,\diff\tilde{\xi}_\psi-\int_{A^\varphi(\zeta)} z_n \,\diff\tilde{\xi}^\varphi,
	\end{align} 
	as $\tilde{\xi}_\psi(I_\psi(\zeta)))=0$ and $\tilde{\xi}^\varphi(I^\varphi(\zeta))=0$,
	see \cref{Lem:SignedMeasure}.
	Since $z_n \geq 0$ $\tilde{\xi}_\psi$-a.e.\ on $A_\psi(\zeta)$ and
	$z_n \leq 0$ $\tilde{\xi}^\varphi$-a.e.\ on $A^\varphi(\zeta)$, cf.\ \cref{Lem:SignedMeasure},
	we conclude that 
	$z_n=0$ $\tilde{\xi}_\psi$-a.e.\ on $A_\psi(\zeta)$ and $z_n=0$ $\tilde{\xi}^\varphi$-a.e.\ on $A^\varphi(\zeta)$.
	Using once more that $\tilde{\xi}_\psi(I_\psi(\zeta))=0$ and $\tilde{\xi}^\varphi(I^\varphi(\zeta))=0$,
	we can see that this means $z_n=0$ $\tilde{\xi}_\psi$- and $\tilde{\xi}^\varphi$-a.e.\ on $\Omega$.
	Since $z_n \to z$ for a subsequence pointwise q.e.\ and thus $\tilde{\xi}_\psi$-
	and $\tilde{\xi}^\varphi$-a.e.,
	see \cref{Lem:SignedMeasure},
	we conclude $z=0$ $\tilde{\xi}_\psi$- and $\tilde{\xi}^\varphi$-a.e.
	
	Vice versa, assume
	$z \in T_{K_\psi^\varphi}(y)$ and $z=0$ $\tilde{\xi}_\psi$- and $\tilde{\xi}^\varphi$-a.e.
	Using \cref{Lem:SignedMeasure},
	we see that \cref{C5:Hilfsresultat2} holds (with $z_n$ replaced by $z$) and thus $z \in T_{K_\psi^\varphi}(y)\cap \xi^\perp$ follows.
	
	Thus, we have shown the second equation in \cref{StructureCriticalCone}.
	
	To show the last identity in \cref{StructureCriticalCone} we note
	that by \cite[Thm.~1]{Stollmann},
	there exist quasi-closed sets
	$A_\psi^{\mathrm{s}}(\zeta)$ and $A_{\mathrm{s}}^\varphi(\zeta)$ such that
	\begin{align}
	\label{StrictlyActiveSet}
	\{z \in H_0^1(\Omega) \mid z=0 ~\tilde{\xi}_\psi\text{-a.e.}\}=\{z \in H_0^1(\Omega) \mid z=0 \text{ q.e.\ on } A_\psi^{\mathrm{s}}(\zeta)\}
	\end{align}
	and 
	\begin{align}
	\label{StrictlyActiveSet2}
	\{z \in H_0^1(\Omega) \mid z=0 ~\tilde{\xi}^\varphi\text{-a.e.}\}=\{z \in H_0^1(\Omega)\mid z=0 \text{ q.e.\ on } A_{\mathrm{s}}^\varphi(\zeta)\}.
	\end{align}
	We have $y-\psi=0$ $\tilde{\xi}_\psi$-a.e.\ 
	and thus $y-\psi=0$ q.e.\ on $A_\psi^{\mathrm{s}}(\zeta)$,
	see \cref{StrictlyActiveSet},
	which implies
	$A_\psi^{\mathrm{s}}(\zeta) \subseteq A_\psi(\zeta)$ up to a set of capacity zero.
	The same arguments apply to show $A_{\mathrm{s}}^\varphi(\zeta) \subseteq A^\varphi(\zeta)$ up to a set of capacity zero.
\end{proof}

\begin{corollary}
	\label{Lem:0OnStrictlyActiveSet}
	Assume that $\psi, \varphi$ fulfill the conditions of \cref{AssumptionObstacles}.
	Let $\zeta \in H^{-1}(\Omega)$ be arbitrary and
	set $y:=S_{\operatorname{id}}(\zeta)$, $\xi:=Ly-\zeta$.
	For $w \in H_0^1(\Omega)$ it holds
	$w=0$ q.e.\ on $A_\psi^{\mathrm{s}}(\zeta)$,
	respectively $w=0$ q.e.\ on $A_{\mathrm{s}}^\varphi(\zeta)$
	if and only if
	$w=0$ $\tilde{\xi}_\psi$-a.e.,
	respectively $w=0$ $\tilde{\xi}^\varphi$-a.e.
	In both cases we have
	$w \in L^1(\tilde{\xi}_\psi) \cap L^1(\tilde{\xi}^\varphi)$
	and $\langle \xi,w\rangle=\int_\Omega w\,\diff\tilde{\xi}_\psi-\int_\Omega w \,\diff\tilde{\xi}^\varphi$.
\end{corollary}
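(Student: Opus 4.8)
The plan is to deduce the corollary directly from the representation of the strictly active sets established in the proof of \cref{Lem:CriticalCone} together with the cross-integrability statement \cref{C5:Lem:SignedMeasure5} of \cref{Lem:SignedMeasure}; essentially no new argument is needed beyond assembling these facts.

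First I would observe that the claimed equivalences are precisely the defining property of $A_\psi^{\mathrm{s}}(\zeta)$ and $A_{\mathrm{s}}^\varphi(\zeta)$. Indeed, by \cite[Thm.~1]{Stollmann}, as used in the proof of \cref{Lem:CriticalCone} (cf.\ \cref{StrictlyActiveSet} and \cref{StrictlyActiveSet2}), a function $w \in H_0^1(\Omega)$ satisfies $w = 0$ q.e.\ on $A_\psi^{\mathrm{s}}(\zeta)$ if and only if $w = 0$ $\tilde{\xi}_\psi$-a.e., and $w = 0$ q.e.\ on $A_{\mathrm{s}}^\varphi(\zeta)$ if and only if $w = 0$ $\tilde{\xi}^\varphi$-a.e. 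Here $w$ is always identified with its quasi-continuous Borel representative, and since $\tilde{\xi}_\psi$ and $\tilde{\xi}^\varphi$ give zero mass to every Borel set of zero capacity by \cref{C5:Lem:SignedMeasure2}, the vanishing statements with respect to $\tilde{\xi}_\psi$ and $\tilde{\xi}^\varphi$ are well defined.

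For the remaining assertions I would argue as follows. Assume first $w = 0$ q.e.\ on $A_\psi^{\mathrm{s}}(\zeta)$, equivalently $w = 0$ $\tilde{\xi}_\psi$-a.e. Then $w$ is trivially $\tilde{\xi}_\psi$-integrable with $\int_\Omega w\,\diff\tilde{\xi}_\psi = 0$, so $w \in H_0^1(\Omega) \cap L^1(\tilde{\xi}_\psi)$ and \cref{C5:Lem:SignedMeasure5} applies, yielding $w \in L^1(\tilde{\xi}^\varphi)$ and $\langle \xi,w\rangle = \int_\Omega w\,\diff\tilde{\xi}_\psi - \int_\Omega w\,\diff\tilde{\xi}^\varphi$ (which here reduces to $\langle \xi,w\rangle = -\int_\Omega w\,\diff\tilde{\xi}^\varphi$). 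Symmetrically, if $w = 0$ q.e.\ on $A_{\mathrm{s}}^\varphi(\zeta)$, equivalently $w = 0$ $\tilde{\xi}^\varphi$-a.e., then $w \in H_0^1(\Omega) \cap L^1(\tilde{\xi}^\varphi)$, and the statement of \cref{C5:Lem:SignedMeasure5} with the roles of $\tilde{\xi}_\psi$ and $\tilde{\xi}^\varphi$ exchanged gives $w \in L^1(\tilde{\xi}_\psi)$ and the same representation. In either case $w \in L^1(\tilde{\xi}_\psi) \cap L^1(\tilde{\xi}^\varphi)$, as claimed.

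I do not expect a genuine obstacle here: the substantial work --- the decomposition $\xi = \tilde{\xi}_\psi - \tilde{\xi}^\varphi$, the capacity estimate \cref{C5:Lem:SignedMeasure2}, the $L^1$ cross-integrability \cref{C5:Lem:SignedMeasure5}, and Stollmann's characterization of the strictly active sets --- is already in place, and the corollary only packages it. The one point deserving care is to read the conditions $w = 0$ $\tilde{\xi}_\psi$-a.e.\ (resp.\ $\tilde{\xi}^\varphi$-a.e.) for the quasi-continuous Borel representative of $w$, so that \cref{C5:Lem:SignedMeasure5} applies verbatim.
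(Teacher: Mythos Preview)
Your proposal is correct and follows essentially the same approach as the paper's own proof: the equivalence is read off from \cref{StrictlyActiveSet}, \cref{StrictlyActiveSet2} in the proof of \cref{Lem:CriticalCone}, and the integrability and representation are obtained from \cref{Lem:SignedMeasure} (specifically part~5). Your write-up is slightly more explicit than the paper's, but the argument is identical.
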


\begin{proof}
	The equivalence is implied by the proof of
	\cref{Lem:CriticalCone},
	see \cref{StrictlyActiveSet} and \cref{StrictlyActiveSet2}.
	The statements $w \in L^1(\tilde{\xi}_\psi) \cap L^1(\tilde{\xi}^\varphi)$ 
	and $\xi,w\rangle=\int_\Omega w\,\diff\tilde{\xi}_\psi-\int_\Omega w \,\diff\tilde{\xi}^\varphi$
	follow from \cref{Lem:SignedMeasure}.
\end{proof}

In the following sections,
we also write 
$A_{\mathrm{s}}(\zeta):=A_\psi^{\mathrm{s}}(\zeta)\cup A_{\mathrm{s}}^\varphi(\zeta)$
for the strictly active set
with respect to both obstacles,
we have $A_{\mathrm{s}}(\zeta)\subseteq A(\zeta)$.

Moreover,
we will use the notation $A_\psi^{\mathrm{w}}(\zeta):=A_\psi(\zeta) \setminus A_\psi^{\mathrm{s}}(\zeta)$ for the weakly active set with respect to the lower obstacle $\psi$
and $A_{\mathrm{w}}^\varphi(\zeta):=A^\varphi(\zeta) \setminus A_{\mathrm{s}}^\varphi(\zeta)$ 
for the weakly active set with respect to the upper obstacle $\varphi$.
For the sake of completeness,
we also introduce the notation
$A_{\mathrm{w}}(\zeta):=A_\psi^{\mathrm{w}}(\zeta)\cup A_{\mathrm{w}}^\varphi(\zeta)$
for the weakly active set with respect to upper and lower obstacle.

\subsubsection{G\^ateaux Differentiability of the Solution Operator}

As in the case of 
unilateral obstacle problems,
in points $u$ at which $S_f$ is G\^ateaux differentiable,
we can replace the critical cone in the characterization of the 
directional derivative by the largest linear subset contained in the critical cone, 
and by the linear hull of the critical cone,
respectively.
Both versions
yield a characterization of the G\^ateaux derivative.
The reasoning for these facts in the case of the unilateral obstacle problem can be found in \cite[Lem.~3.7]{RaulsUlbrich}.

For the bilateral case,
characterizations of the G\^ateaux derivative are summarized 
in the following theorem.

\begin{theorem}
	\label{Theo:GateauxDerivative}
	Assume the obstacles $\psi, \varphi$ satisfy \cref{AssumptionObstacles}.
	Suppose $S_f$ is G\^ateaux differentiable at $u \in U$. Let $h \in U$.
	Then $S_f'(u;h)$ is determined by the solution of the variational equation
	\begin{align}
	\label{VEGateaux}
	\text{Find } \eta \in H_0^1(D):
	\quad
	\langle L\eta-f'(u;h),z\rangle=0
	\quad \forall\, z \in H_0^1(D)
	\end{align}
	and $D$ can be chosen as any quasi-open subset of $\Omega$
	fulfilling
	$I(u) \subseteq D \subseteq \Omega \setminus A_{\mathrm{s}}(u)$.
\end{theorem}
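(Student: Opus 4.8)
The plan is to start from the variational inequality \cref{DirectionalDerivative} for the directional derivative and combine it with the characterization of the critical cone from \cref{Lem:CriticalCone}. Since $S_f$ is assumed G\^ateaux differentiable at $u$, the map $h \mapsto S_f'(u;h) =: \eta(h)$ is linear. The key structural fact I would invoke is that a solution of a variational inequality over a closed convex cone $\mathcal{C}$ depends linearly on the right-hand side if and only if the solution in fact lies in the largest linear subspace contained in $\mathcal{C}$, and then it solves the corresponding variational \emph{equation} over that subspace; conversely, one can also pass to the linear hull of $\mathcal{C}$ and still obtain a characterization. This is exactly the argument carried out for the unilateral problem in \cite[Lem.~3.7]{RaulsUlbrich}, and I would transcribe it to the present setting. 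By \cref{Lem:CriticalCone}, the critical cone $\mathcal{K}_{K_\psi^\varphi}(S_f(u),\xi)$ (with $\xi=LS_f(u)-f(u)$ and $\zeta=f(u)$, writing $A_{\mathrm s}(u):=A_{\mathrm s}(f(u))$, etc.) equals $\{z\in H_0^1(\Omega)\mid z\ge 0$ q.e.\ on $A_\psi(u)$, $z\le 0$ q.e.\ on $A^\varphi(u)$, $z=0$ q.e.\ on $A_{\mathrm s}(u)\}$. Its largest linear subspace is
\begin{align*}
\{z\in H_0^1(\Omega)\mid z=0 \text{ q.e.\ on } A(u)\cup A_{\mathrm s}(u)\} = \{z\in H_0^1(\Omega)\mid z=0 \text{ q.e.\ on } A(u)\} = H_0^1(I(u)),
\end{align*}
since $A_{\mathrm s}(u)\subseteq A(u)$; and its linear hull is contained in $\{z\in H_0^1(\Omega)\mid z=0$ q.e.\ on $A_{\mathrm s}(u)\} = H_0^1(\Omega\setminus A_{\mathrm s}(u))$, with the reverse inclusion after closure because on the weakly active sets $A_\psi^{\mathrm w}(u), A_{\mathrm w}^\varphi(u)$ the sign constraints can be relaxed. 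So $S_f'(u;h)$ solves \cref{VEGateaux} both on $H_0^1(I(u))$ and, using polyhedricity and the density statement \cref{Lem:Capacity}(3), on $H_0^1(\Omega\setminus A_{\mathrm s}(u))$.

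Concretely, the steps I would carry out are: (i) record that $\eta(\cdot)=S_f'(u;\cdot)$ is linear and solves the variational inequality \cref{DirectionalDerivative} over $\mathcal{K}:=\mathcal{K}_{K_\psi^\varphi}(S_f(u),\xi)$; (ii) show $\eta(h)$ lies in the maximal subspace $H_0^1(I(u))$ — here one tests the variational inequality with $\eta(h)\pm z$ for $z$ ranging over $H_0^1(I(u))$ to get the equation on that subspace, and separately exploits linearity ($\eta(-h)=-\eta(h)$, but $\eta(\pm h)\in\mathcal{K}$ forces $\eta(h)$ into $\mathcal{K}\cap(-\mathcal{K})$) to conclude $\eta(h)$ vanishes q.e.\ on all of $A(u)$; (iii) conversely, for any quasi-open $D$ with $I(u)\subseteq D\subseteq\Omega\setminus A_{\mathrm s}(u)$, show that the (unique, by coercivity of $L$) solution $\tilde\eta\in H_0^1(D)$ of \cref{VEGateaux} actually lies in $\mathcal{K}$ and satisfies the variational inequality \cref{DirectionalDerivative}, hence equals $\eta(h)$ by uniqueness; the point is that $\tilde\eta=0$ q.e.\ on $A_{\mathrm s}(u)$ automatically, and on $A_\psi^{\mathrm w}(u)\cup A_{\mathrm w}^\varphi(u)\subseteq D$ one must verify the one-sided sign conditions using a testing argument together with \cref{Lem:CriticalCone}, i.e.\ that the extra part of $D$ beyond $I(u)$ carries no $\tilde\xi_\psi$- or $\tilde\xi^\varphi$-mass by \cref{Lem:SignedMeasure} and \cref{Lem:CriticalCone}.

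The main obstacle is step (iii), and specifically verifying that the solution $\tilde\eta$ of the variational \emph{equation} on the larger domain $H_0^1(D)$ automatically satisfies the one-sided sign constraints defining the critical cone on the weakly active parts $A_\psi^{\mathrm w}(u)$ and $A_{\mathrm w}^\varphi(u)$. In the unilateral case this is handled by a clean argument because the residual is a single nonnegative measure; here, because of the two measures $\tilde\xi_\psi,\tilde\xi^\varphi$ with possibly touching supports, one must be careful that the relevant integrals are well-defined — this is where \cref{C5:Lem:SignedMeasure5} and \cref{Lem:0OnStrictlyActiveSet} are needed, to guarantee $\tilde\eta\in L^1(\tilde\xi_\psi)\cap L^1(\tilde\xi^\varphi)$ and that $\langle\xi,\tilde\eta\rangle=\int\tilde\eta\,\diff\tilde\xi_\psi-\int\tilde\eta\,\diff\tilde\xi^\varphi$ holds for such $\tilde\eta$ even though $\tilde\eta$ need not be in $L^\infty(\Omega)$. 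Once these integrability issues are dispatched, testing \cref{VEGateaux} with suitable truncations/positive and negative parts of $\tilde\eta$ and using T-monotonicity of $L$ yields the required sign information, and uniqueness for the coercive problem closes the argument.
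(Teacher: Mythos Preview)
Your steps (i) and (ii) and the summary in your first paragraph are exactly the paper's argument: G\^ateaux differentiability forces $\eta(h)\in\mathcal{K}\cap(-\mathcal{K})=H_0^1(I(u))$, and then the VI together with the VI for $-h$ plus the quasi-covering approximation of \cref{Lem:Capacity}(3) shows that $\langle L\eta(h)-f'(u;h),z\rangle=0$ for all $z\in H_0^1(\Omega\setminus A_{\mathrm s}(u))$. Once both endpoints are established, the statement for any intermediate quasi-open $D$ is immediate by sandwiching: $\eta(h)\in H_0^1(I(u))\subseteq H_0^1(D)$, and the equality holds for all test functions in $H_0^1(D)\subseteq H_0^1(\Omega\setminus A_{\mathrm s}(u))$; uniqueness by coercivity identifies $\eta(h)$ as the VE solution on $H_0^1(D)$.

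Your step (iii) is therefore unnecessary, and as written it has a genuine gap. You propose to take the VE solution $\tilde\eta$ on $H_0^1(D)$ and show directly that $\tilde\eta\in\mathcal{K}$, i.e., that $\tilde\eta\ge 0$ q.e.\ on $A_\psi^{\mathrm w}(u)\cap D$ and $\tilde\eta\le 0$ q.e.\ on $A_{\mathrm w}^\varphi(u)\cap D$. But the variational equation $\langle L\tilde\eta,z\rangle=\langle f'(u;h),z\rangle$ is linear in the arbitrary datum $f'(u;h)$, and T-monotonicity of $L$ alone cannot force one-sided sign constraints on $\tilde\eta$ over prescribed subsets; the sign on the weakly active sets genuinely depends on $h$. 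In fact $\tilde\eta=0$ q.e.\ on all of $A(u)$ (which is stronger than the sign constraints) is a \emph{consequence} of the theorem, obtained only after identifying $\tilde\eta=\eta(h)\in H_0^1(I(u))$, not something you can prove independently. The paper avoids this entirely by working the other direction: keep $\eta(h)$ fixed and enlarge the test space, using the decomposition of any $z\in H_0^1(\Omega\setminus A_{\mathrm s}(u))$ via the quasi-covering $\{\Omega\setminus(A_{\mathrm s}(u)\cup A_\psi(u)),\,\Omega\setminus(A_{\mathrm s}(u)\cup A^\varphi(u))\}$ into pieces whose positive and negative parts all lie in $\pm\mathcal{K}$. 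No integrability subtleties for $\tilde\xi_\psi,\tilde\xi^\varphi$ enter at this stage; \cref{Lem:0OnStrictlyActiveSet} is not needed here.
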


\begin{proof}
	Assume $S_f$ is G\^ateaux differentiable at $u \in U$.
	Then the map $S_f'(u;\cdot)$ is linear
	and the image is a linear subspace of $H_0^1(\Omega)$.
	By the characterization in \cref{DirectionalDerivative},
	the image of $S_f'(u;\cdot)$ lies in a linear subspace of
	the critical cone $(LS_f(u)-f(u))^\perp\cap T_{K_\psi^\varphi}(S_f(u))$.
	The structure of the critical cone established in \cref{Lem:CriticalCone} implies that
	$S_f'(u;h)\in H_0^1(I(f(u)))$,
	since $H_0^1(I(f(u)))$ is the largest linear subset contained in the critical cone.
	Now $S_f'(u;h)$ solves the variational equation \cref{VEGateaux}
	with $D=I(f(u))$, since $H_0^1(I(f(u)))$ is a linear subspace.
	
	Obviously, the image of $S_f'(u;\cdot)$ is also contained
	in the linear hull of the critical cone,
	the set $H_0^1(\Omega \setminus A_{\mathrm{s}}(f(u)))$.
	Assume $z$ is in the critical cone and $h \in U$ is arbitrary.
	Then
	\begin{align*}
	\langle LS_f'(u;-h)-f'(u;-h),z-S_f'(u;-h)\rangle \geq 0,
	\end{align*}
	which implies
	\begin{align*}	
	\langle LS_f'(u;h)-f'(u;h),-z-S_f'(u;h)\rangle \geq 0.
	\end{align*}
	Thus, by linearity arguments
	we obtain that we can also use test functions from the negative critical cone.
	Let $z \in H_0^1(\Omega \setminus A_{\mathrm{s}}(f(u)))$ be arbitrary.
	Since the two sets $\Omega \setminus (A_{\mathrm{s}}(f(u))\cup A_\psi(f(u)))$
	and $\Omega \setminus (A_{\mathrm{s}}(f(u))\cup A^\varphi(f(u)))$
	are a quasi-covering of $\Omega \setminus A_{\mathrm{s}}(f(u))$,
	we can find a sequence $(z_\psi^n+z^\varphi_n)_{n \in \mathbb{N}}$
	converging to $z$ and fulfilling
	$z_\psi^n \in H_0^1(\Omega \setminus (A_{\mathrm{s}}(f(u))\cup A^\varphi(f(u))))$
	and 
	$z_n^\varphi \in H_0^1(\Omega \setminus (A_{\mathrm{s}}(f(u))\cup A_\psi(f(u))))$,
	see \cref{Lem:Capacity}. 
	Considering positive and negative parts we write 
	\begin{align*}
	z_n^\varphi=z_n^{\varphi,+}-z_n^{\varphi,-}\quad
	\text{ and }
	z_\psi^n=z_{\psi,+}^n-z_{\psi,-}^n.
	\end{align*}
	The representation in \cref{StructureCriticalCone} implies that $z_{\psi,+}^n$, $z_{\psi,-}^n$, $-z_n^{\varphi,+}$ and $-z_n^{\varphi,-}$ are elements of the critical cone.
	This shows 
	\begin{align*}
	\langle LS_f'(u;h)-f'(u;h),z_\psi^n+z_n^\varphi-S_f'(u;h)\rangle \geq 0
	\end{align*}
	for all $n \in \mathbb{N}$.
	Taking the limit and observing that $H_0^1(\Omega \setminus A_{\mathrm{s}}(f(u)))$ is a linear subspace we obtain
	\begin{align*}
	\langle LS_f'(u;h)-f'(u;h),z\rangle = 0
	\end{align*}
	for all $z \in H_0^1(\Omega \setminus A_{\mathrm{s}}(f(u)))$.
	
	Thus, since 
	\cref{VEGateaux} is a characterization of the G\^ateaux derivative
	for $D=I(f(u))$ and for $D=\Omega \setminus A_{\mathrm{s}}(f(u))$,
	each set $H_0^1(D)$
	with $I(f(u)) \subseteq D \subseteq \Omega \setminus A_{\mathrm{s}}(f(u))$
	also yields a characterization for the G\^ateaux derivative.
\end{proof}

\section{Monotonicity of the Active and Strictly Active Sets}
\label{Sec:Monotonicity}

In this subsection,
the monotonicity of the active and strictly active sets is studied.
Within this section,
we specify our notation and write
$S_{\psi,f}^\varphi$ instead of $S_f$ for the solution operator of \cref{BOP}.

The monotonicity of the active sets
is a direct consequence of \cref{Lem:Monotonicity}.

\begin{lemma}
	Let $u_1, u_2 \in U$ with $u_1 \geq_U u_2$. Then
	\begin{enumerate}
		\item $A_\psi(f(u_1))\subseteq A_\psi(f(u_2))$,
		\item $A^\varphi(f(u_1)) \supseteq A^\varphi(f(u_2))$.
	\end{enumerate}
\end{lemma}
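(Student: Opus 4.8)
The plan is to deduce everything directly from the pointwise monotonicity in \cref{Lem:Monotonicity}. First I would set $y_i := S_f(u_i) = S_{\operatorname{id}}(f(u_i))$ for $i=1,2$, noting that solving \cref{BOP} with source $f(u_i)$ is precisely evaluating $S_{\operatorname{id}}$ at $\zeta = f(u_i)$, so this identification is immediate. Since $f$ is increasing (\cref{Assumption}) and $u_1 \geq_U u_2$, we have $f(u_1) \geq f(u_2)$ in $H^{-1}(\Omega)$; hence \cref{Lem:Monotonicity} yields $y_1 \geq y_2$ q.e.\ in $\Omega$. Moreover $y_1, y_2 \in K_\psi^\varphi$, so $\psi \leq y_i \leq \varphi$ q.e.\ in $\Omega$. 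Throughout I work with quasi-continuous representatives, so that the active sets are well defined up to sets of capacity zero.

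For the first inclusion I would argue on $A_\psi(f(u_1)) = \{y_1 = \psi\}$: q.e.\ on this set one has $\psi \leq y_2 \leq y_1 = \psi$, whence $y_2 = \psi$ q.e.\ there. This gives $A_\psi(f(u_1)) \subseteq \{y_2 = \psi\} = A_\psi(f(u_2))$ up to a set of capacity zero, which is the sense in which all set inclusions are understood here.

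For the second inclusion I would symmetrically work on $A^\varphi(f(u_2)) = \{y_2 = \varphi\}$: q.e.\ on this set, $\varphi = y_2 \leq y_1 \leq \varphi$, so $y_1 = \varphi$ q.e.\ there, giving $A^\varphi(f(u_2)) \subseteq A^\varphi(f(u_1))$, i.e.\ $A^\varphi(f(u_1)) \supseteq A^\varphi(f(u_2))$.

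There is essentially no obstacle in this argument; the only points meriting a word of care are (i) the use of quasi-continuous representatives so that the pointwise q.e.\ comparisons make sense and the active sets are determined up to capacity zero, and (ii) recording that the inclusions hold modulo sets of capacity zero, consistent with the convention fixed after \cref{Lem:Capacity}.
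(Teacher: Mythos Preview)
Your proof is correct and follows essentially the same approach as the paper: both invoke \cref{Lem:Monotonicity} to obtain $S_f(u_1)\geq S_f(u_2)$ q.e.\ and then read off the inclusions of the active sets. The paper's proof is simply more terse (it does not spell out the pointwise sandwich argument), and note that \cref{Lem:Monotonicity} already applies directly to $u_1\geq_U u_2$ without needing to pass through $f(u_1)\geq f(u_2)$.
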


\begin{proof}
	By \cref{Lem:Monotonicity},
	we have $S_{\psi,f}^\varphi(u_1) \geq S_{\psi,f}^\varphi(u_2)$ a.e.\
	and by \cref{Lem:Capacity} also q.e.\ in $\Omega$.
	This implies the statements.
\end{proof}

\begin{lemma}
	\label{Lem:Invariance1}
	Suppose the conditions of \cref{AssumptionObstacles} are satisfied.
	Let $u \in U$ and let 
	$v \in H_0^1(\Omega)_+$ 
	such that $\{v>0\} \subseteq \Omega \setminus A_\psi^{\mathrm{s}}(f(u))$.
	Then $S_{\psi,f}^\varphi(u)=S_{\psi-v,f}^\varphi(u)$.
\end{lemma}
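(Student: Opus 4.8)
The strategy is to show that $y:=S_{\psi,f}^\varphi(u)$ is also the (unique) solution of \cref{BOP} when $\psi$ is replaced by the lowered obstacle $\psi-v$; the assertion then follows from uniqueness of solutions of \cref{BOP}. Since $v\in H_0^1(\Omega)_+\subseteq H^1(\Omega)$, we have $\psi-v\in H^1(\Omega)$ with $\psi-v\leq\psi$ q.e., hence $K_\psi^\varphi\subseteq K_{\psi-v}^\varphi$; in particular $y\in K_{\psi-v}^\varphi$, so $K_{\psi-v}^\varphi$ is nonempty and $S_{\psi-v,f}^\varphi(u)$ is well defined. It remains to verify $\langle Ly-f(u),z-y\rangle\geq0$ for every $z\in K_{\psi-v}^\varphi$.

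Given such a $z$, the plan is to compare with the truncation $\hat z:=\max(z,\psi)=z+w$, where $w:=(\psi-z)_+$. The key observation is that $w\in H_0^1(\Omega)_+$ and $w=0$ q.e.\ on $A_\psi^{\mathrm{s}}(f(u))$: indeed $\psi-z\in H^1(\Omega)$ gives $w\in H^1(\Omega)_+$, while $z\geq\psi-v$ q.e.\ yields $0\leq w\leq v$ q.e., so that $w=\min(v,w)\in H_0^1(\Omega)$ and $\{w>0\}\subseteq\{v>0\}\subseteq\Omega\setminus A_\psi^{\mathrm{s}}(f(u))$. Moreover $\hat z=z+w\in H_0^1(\Omega)$ with $\psi\leq\hat z$ q.e., and, using $z\leq\varphi$ and $\psi\leq\varphi$ q.e.\ (the latter by \cref{AssumptionObstacles}), also $\hat z\leq\varphi$ q.e., so $\hat z\in K_\psi^\varphi$. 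Testing the variational inequality characterizing $y$ with $\hat z$ therefore gives
\begin{align*}
0\leq\langle Ly-f(u),\hat z-y\rangle=\langle Ly-f(u),z-y\rangle+\langle Ly-f(u),w\rangle.
\end{align*}

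It thus suffices to show $\langle Ly-f(u),w\rangle\leq0$. Writing $\zeta:=f(u)$, so that $y=S_{\operatorname{id}}(\zeta)$, and $\xi:=Ly-\zeta$, \cref{Lem:SignedMeasure} provides the decomposition $\xi=\tilde{\xi}_\psi-\tilde{\xi}^\varphi$ with $\tilde{\xi}_\psi,\tilde{\xi}^\varphi\in\mathcal{M}_+(\Omega)$. Since $w\in H_0^1(\Omega)$ vanishes q.e.\ on $A_\psi^{\mathrm{s}}(\zeta)$, \cref{Lem:0OnStrictlyActiveSet} shows that $w\in L^1(\tilde{\xi}_\psi)\cap L^1(\tilde{\xi}^\varphi)$, that $w$ vanishes $\tilde{\xi}_\psi$-a.e., and, because $\tilde{\xi}^\varphi$ is a nonnegative measure and $w\geq0$,
\begin{align*}
\langle\xi,w\rangle=\int_\Omega w\,\diff\tilde{\xi}_\psi-\int_\Omega w\,\diff\tilde{\xi}^\varphi=-\int_\Omega w\,\diff\tilde{\xi}^\varphi\leq0.
\end{align*}
Combining the two displays yields $\langle Ly-f(u),z-y\rangle\geq0$ for all $z\in K_{\psi-v}^\varphi$, so $y$ solves \cref{BOP} for the obstacles $\psi-v,\varphi$ and the control $u$, and hence $S_{\psi,f}^\varphi(u)=y=S_{\psi-v,f}^\varphi(u)$. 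I expect the only point requiring care to be the regularity and localization of the correction term $w=(\psi-z)_+$ — namely that $w\in H_0^1(\Omega)_+$ and $w=0$ q.e.\ on $A_\psi^{\mathrm{s}}(f(u))$ — since this is precisely what makes \cref{Lem:0OnStrictlyActiveSet} applicable and fixes the sign of $\langle\xi,w\rangle$; the rest is the standard comparison argument with $\max(z,\psi)$.
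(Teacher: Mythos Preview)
Your proof is correct and follows essentially the same route as the paper: both decompose an arbitrary $z\in K_{\psi-v}^\varphi$ via $\max(z,\psi)$ and its complementary piece (the paper writes $z=z_1+z_2$ with $z_2=\min(z-\psi,0)=-w$), test the original variational inequality with $\max(z,\psi)\in K_\psi^\varphi$, and control the remainder using \cref{Lem:0OnStrictlyActiveSet} together with the sign of $\tilde{\xi}^\varphi$. Your justification that $w=(\psi-z)_+\in H_0^1(\Omega)$ via $0\le w\le v$ and $w=\min(v,w)$ is in fact a bit more explicit than the paper's, which simply asserts $z_2\in H_0^1(\Omega\setminus A_\psi^{\mathrm{s}}(f(u)))_-$.
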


\begin{proof}
	Obviously, $S_{\psi,f}^\varphi(u) \geq \psi-v$.
	Now, let $z\in K_{\psi-v}^\varphi$ be arbitrary.
	We need to show that
	\begin{align*}
	\langle LS_{\psi,f}^\varphi(u)-f(u),z-S_{\psi,f}^\varphi(u)\rangle \geq 0.
	\end{align*}
	Now, $z=\max(z,\psi)+\min(z-\psi,0)=:z_1+z_2$, where $z_1\in
	K_\psi^\varphi$ and $z_2\in H_0^1(\Omega \setminus A_\psi^{\mathrm{s}}(f(u)))_-$.
	Thus
	\begin{align*}
	\langle LS_{\psi,f}^\varphi(u)-f(u),z_1-S_{\psi,f}^\varphi(u)\rangle \geq 0
	\end{align*}
	and moreover, since $z_2=0$ q.e.\ on $A_\psi^{\mathrm{s}}(f(u))$ 
	and thus $\tilde{\xi}_\psi$-a.e.,
	see \cref{Lem:SignedMeasure},
	we have by \cref{Lem:0OnStrictlyActiveSet}
	\begin{align*}
	\langle LS_{\psi,f}^\varphi(u)-f(u),z_2\rangle=
	\int_{A_\psi} z_2 \,\diff\tilde{\xi}_\psi-\int_{A^\varphi} z_2\,\diff\tilde{\xi}^\varphi
	=-\int_{A^\varphi} z_2\,\diff\tilde{\xi}^\varphi\ge 0.
	\end{align*}
	Here, $\tilde{\xi}_\psi$ and $\tilde{\xi}^\varphi$ are the measures 
	as in \cref{Lem:SignedMeasure}
	that belong to the functional
	$LS_{\psi,f}^\varphi(u)-f(u)=LS_{\psi,\operatorname{id}}^\varphi(f(u))-f(u)$.
\end{proof}

\begin{lemma}
	\label{Lem:ExchangedRole}
	Let $\zeta \in H^{-1}(\Omega)$.
	Then we have
	$-S_{\psi,\operatorname{id}}^\varphi(\zeta)=S_{-\varphi,\operatorname{id}}^{-\psi}(-\zeta)$.
	Moreover, $A_\psi(\zeta)=\tilde{A}^{-\psi}(-\zeta)$ and
	$A^\varphi(\zeta)=\tilde{A}_{-\varphi}(-\zeta)$.
	Here, 
	$\tilde{A}_{-\varphi}(\zeta):=\{\omega \in \Omega \mid S_{-\varphi,\operatorname{id}}^{-\psi}(\zeta)(\omega)=-\varphi(\omega)\}$
	and
	$\tilde{A}^{-\psi}(\zeta):=\{\omega \in \Omega \mid S_{-\varphi,\operatorname{id}}^{-\psi}(\zeta)(\omega)=-\psi(\omega)\}$
	denote the respective active sets for $S_{-\varphi,\operatorname{id}}^{-\psi}(\zeta)$.\\
	Furthermore,
	if the conditions of \cref{AssumptionObstacles} are fulfilled,
	we have
	$A_\psi^{\mathrm{s}}(\zeta)=\tilde{A}_{\mathrm{s}}^{-\psi}(-\zeta)$ and
	$A_{\mathrm{s}}^\varphi(\zeta)=\tilde{A}_{-\varphi}^{\mathrm{s}}(-\zeta)$.
	Here,
	$\tilde{A}_{\mathrm{s}}^{-\psi}(\zeta), \tilde{A}_{-\varphi}^{\mathrm{s}}(\zeta)$ denote the strictly active sets for $S_{-\varphi,\operatorname{id}}^{-\psi}(\zeta)$.
\end{lemma}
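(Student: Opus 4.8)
The plan is to establish the three groups of assertions in turn: the reflection identity $-S_{\psi,\operatorname{id}}^\varphi(\zeta)=S_{-\varphi,\operatorname{id}}^{-\psi}(-\zeta)$ from uniqueness of solutions of \cref{BOP}, the active-set identities by a direct computation with quasi-continuous representatives, and the strictly-active-set identities by tracking the construction in the proof of \cref{Lem:SignedMeasure}.

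For the first part I would set $y:=S_{\psi,\operatorname{id}}^\varphi(\zeta)$ and show that $w:=-y$ solves \cref{BOP} for the obstacle pair $(-\varphi,-\psi)$ and source $-\zeta$. Since $-\varphi,-\psi\in H^1(\Omega)$ and the map $z\mapsto -z$ is a bijection between $K_\psi^\varphi$ and $K_{-\varphi}^{-\psi}$ (because $\psi\le z\le\varphi$ q.e.\ is equivalent to $-\varphi\le -z\le -\psi$ q.e.), the set $K_{-\varphi}^{-\psi}$ is nonempty, $w\in K_{-\varphi}^{-\psi}$, and the reflected variational inequality has a unique solution. For arbitrary $\tilde z\in K_{-\varphi}^{-\psi}$ I put $z:=-\tilde z\in K_\psi^\varphi$ and use linearity of $L$ to get
\begin{align*}
\langle Lw-(-\zeta),\tilde z-w\rangle
=\langle -(Ly-\zeta),-(z-y)\rangle
=\langle Ly-\zeta,z-y\rangle\ge 0 ,
\end{align*}
so $w$ solves the reflected problem and uniqueness gives $-S_{\psi,\operatorname{id}}^\varphi(\zeta)=S_{-\varphi,\operatorname{id}}^{-\psi}(-\zeta)$. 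Working throughout with quasi-continuous, Borel measurable representatives, $S_{-\varphi,\operatorname{id}}^{-\psi}(-\zeta)$ is then represented by $-\tilde y$ and the obstacles $-\psi,-\varphi$ by $-\tilde\psi,-\tilde\varphi$, whence $\tilde A^{-\psi}(-\zeta)=\{-\tilde y=-\tilde\psi\}=\{\tilde y=\tilde\psi\}=A_\psi(\zeta)$ and $\tilde A_{-\varphi}(-\zeta)=\{-\tilde y=-\tilde\varphi\}=\{\tilde y=\tilde\varphi\}=A^\varphi(\zeta)$, each up to a set of capacity zero.

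For the strictly active sets I would additionally invoke \cref{AssumptionObstacles}. The residual of the reflected problem is $\tilde\xi:=L(-y)-(-\zeta)=-\xi$ with $\xi:=Ly-\zeta$. In the construction in the proof of \cref{Lem:SignedMeasure} applied to the reflected problem, the auxiliary quotient is $\frac{(-y)-(-\varphi)}{(-\psi)-(-\varphi)}=1-v$ with $v=\frac{y-\psi}{\varphi-\psi}$, so the nonnegative linear form associated with the lower obstacle $-\varphi$ is $w\mapsto\langle\tilde\xi,(1-(1-v))w\rangle=\langle\xi,-v\,w\rangle$ and the one associated with the upper obstacle $-\psi$ is $w\mapsto\langle\tilde\xi,-(1-v)w\rangle=\langle\xi,(1-v)w\rangle$; these are precisely the forms that define $\tilde\xi^\varphi$ and $\tilde\xi_\psi$ in \cref{Lem:SignedMeasure} for the original problem. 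By uniqueness of the representing Radon measures, the measure that \cref{Lem:SignedMeasure} attaches to the lower obstacle of the reflected problem equals $\tilde\xi^\varphi$ and the one attached to its upper obstacle equals $\tilde\xi_\psi$. Since $\tilde A_{-\varphi}^{\mathrm{s}}(-\zeta)$ and $\tilde A_{\mathrm{s}}^{-\psi}(-\zeta)$ are characterized through \cite[Thm.~1]{Stollmann} by $\{z\in H_0^1(\Omega)\mid z=0~\tilde\xi^\varphi\text{-a.e.}\}$ and $\{z\in H_0^1(\Omega)\mid z=0~\tilde\xi_\psi\text{-a.e.}\}$, respectively, and the characterizing quasi-closed sets are unique up to capacity zero, this yields $A_{\mathrm{s}}^\varphi(\zeta)=\tilde A_{-\varphi}^{\mathrm{s}}(-\zeta)$ and $A_\psi^{\mathrm{s}}(\zeta)=\tilde A_{\mathrm{s}}^{-\psi}(-\zeta)$.

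The main obstacle is the bookkeeping in the last step: under the reflection the roles of lower and upper obstacle are interchanged, so one must verify that the measure the reflected problem associates with its lower obstacle $-\varphi$ is the one the original problem associates with its upper obstacle $\varphi$, and conversely, and the sign flips in the formulas $\xi_\psi(w)=\langle\xi,(1-v)w\rangle$, $\xi^\varphi(w)=\langle\xi,-v\,w\rangle$ are where an error is easiest to make. As an alternative to re-examining the construction, one can observe that under \cref{AssumptionObstacles} the active sets $A_\psi(\zeta)$ and $A^\varphi(\zeta)$ are disjoint up to capacity zero, so the decomposition of $\xi$ in \cref{Lem:SignedMeasure} into nonnegative measures concentrated on the two active sets is unique, which gives the same identification of measures and hence of the strictly active sets.
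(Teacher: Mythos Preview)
Your argument is correct and, for the reflection identity and the ordinary active sets, essentially identical to the paper's proof: both establish $-K_\psi^\varphi=K_{-\varphi}^{-\psi}$ and verify the reflected variational inequality by linearity of $L$, then read off the active-set identities directly.

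For the strictly active sets you actually do more than the paper. The paper records only the residual relation $LS_{\psi,\operatorname{id}}^\varphi(\zeta)-\zeta=-(LS_{-\varphi,\operatorname{id}}^{-\psi}(-\zeta)-(-\zeta))$ and asserts that this already yields the claims, without spelling out how the two nonnegative measures in \cref{Lem:SignedMeasure} match up under the reflection. Your computation that the auxiliary quotient transforms as $v\mapsto 1-v$, hence $\tilde\xi_{-\varphi}=\tilde\xi^\varphi$ and $\tilde\xi^{-\psi}=\tilde\xi_\psi$, makes this precise, and your alternative uniqueness argument (the two measures are concentrated on disjoint active sets under \cref{AssumptionObstacles}) is a clean way to reach the same conclusion. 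Either route justifies the step the paper leaves implicit.
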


\begin{proof}
	First, let us note that for $z \in H_0^1(\Omega)$
	it holds $\psi \leq z \leq \varphi$
	if and only if
	$-\varphi \leq -z \leq -\psi$ and this implies
	\begin{align*}
	-K_\psi^\varphi=K_{-\varphi}^{-\psi}.
	\end{align*}
	Thus, $-S_{\psi,\operatorname{id}}^\varphi(\zeta) \in K_{-\varphi}^{-\psi}$.
	Let now $z \in K_{-\varphi}^{-\psi}$ be arbitrary.
	Then we have
	\begin{align*}
	&\langle L(-S_{\psi,\operatorname{id}}^\varphi(\zeta))+\zeta,z-(-S_{\psi,\operatorname{id}}^\varphi(\zeta))\rangle\\
	&=-\langle LS_{\psi,\operatorname{id}}^\varphi(\zeta)-\zeta,z-(-S_{\psi,\operatorname{id}}^\varphi(\zeta))\rangle\\
	&=\langle LS_{\psi,\operatorname{id}}^\varphi(\zeta)-\zeta,-z-S_{\psi,\operatorname{id}}^\varphi(\zeta)\rangle\\
	&\geq 0.
	\end{align*}
	This yields $-S_{\psi,\operatorname{id}}^\varphi(\zeta)=S_{-\varphi,\operatorname{id}}^{-\psi}(-\zeta)$.
	
	Now, $S_{-\varphi,\operatorname{id}}^{-\psi}(-\zeta)(\omega)=-\psi(\omega)$
	if and only if $S_{\psi,\operatorname{id}}^\varphi(\zeta)(\omega)=\psi(\omega)$
	and we have $S_{-\varphi,\operatorname{id}}^{-\psi}(-\zeta)(\omega)=-\varphi(\omega)$
	if and only if $S_{\psi,\operatorname{id}}^\varphi(\zeta)(\omega)=\varphi(\omega)$,
	thus $A_\psi(\zeta)=\tilde{A}^{-\psi}(-\zeta)$
	and $A^\varphi(\zeta)=\tilde{A}_{-\varphi}(-\zeta)$.
	
	In addition, we have
	\begin{align*}
	LS_{\psi,\operatorname{id}}^\varphi(\zeta)-\zeta=-(LS_{-\varphi,\operatorname{id}}^{-\psi}(-\zeta)-(-\zeta)).
	\end{align*}
	This shows the statements for the strictly active sets.
\end{proof}

\begin{remark}
	If $f \colon U \to H^{-1}(\Omega)$
	satisfies $f(-u)=-f(u)$
	for all $u \in U$,
	then $-S_{\psi,f}^\varphi(u)=S_{-\psi,f}^{-\varphi}(-u)$.
\end{remark}

Now, we check the monotonicity of the strictly active sets.
\begin{lemma}
	\label{Lem:MonotonicityStrictlyActiveSets}
	Let the requirements of \cref{AssumptionObstacles} be satisfied and
	let $u_1 \geq_U u_2$. Then it follows
	\begin{enumerate}
		\item $A_\psi^{\mathrm{s}}(f(u_1))\subseteq A_\psi^{\mathrm{s}}(f(u_2))$,
		\item $A_{\mathrm{s}}^\varphi(f(u_1)) \supseteq A_{\mathrm{s}}^\varphi(f(u_2))$.
	\end{enumerate}
\end{lemma}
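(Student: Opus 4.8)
The plan is to establish statement~1 by a converse of \cref{Lem:Invariance1} and then to obtain statement~2 from statement~1 via \cref{Lem:ExchangedRole}. Write $y_i:=S_{\psi,f}^\varphi(u_i)$.

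The key auxiliary claim is a converse of \cref{Lem:Invariance1}: \emph{if $u\in U$ and $v\in H_0^1(\Omega)_+\cap L^\infty(\Omega)$ satisfy $S_{\psi,f}^\varphi(u)=S_{\psi-v,f}^\varphi(u)=:y$, then $\{v>0\}\cap A_\psi^{\mathrm s}(f(u))=\emptyset$ up to a set of capacity zero.} To prove this I would argue as in the proof of \cref{C5:Lem:SignedMeasure4}. Let $\xi:=Ly-f(u)$; since $y=S_{\psi,\operatorname{id}}^\varphi(f(u))$, \cref{Lem:SignedMeasure} applies to the pair $(\psi,\varphi)$ and yields the decomposition $\xi=\tilde\xi_\psi-\tilde\xi^\varphi$. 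Set $\bar v:=(y-\psi+v)/(\varphi-\psi+v)$; by \cref{AssumptionObstacles}, $\varphi-\psi+v\ge c_\psi^\varphi>0$ q.e., so $\bar v\in H^1(\Omega)\cap L^\infty(\Omega)$ with $0\le\bar v\le1$. For a cut-off $\chi\in C_{\mathrm c}^\infty(\Omega)$ with $0\le\chi\le1$ and $\chi\equiv1$ on a compact $K\subseteq\Omega$, set $w:=\chi\bigl[(1-\bar v)(\psi-v)+\bar v\,y\bigr]+(1-\chi)y\in K_{\psi-v}^\varphi$, so that the variational inequality for $y=S_{\psi-v,f}^\varphi(u)$ gives $\langle\xi,w-y\rangle\ge0$ with $w-y=\chi(1-\bar v)(\psi-v-y)\in H_0^1(\Omega)\cap L^\infty(\Omega)$. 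Applying \cref{C5:Lem:SignedMeasure3} and using both $\tilde\xi^\varphi(I^\varphi(f(u)))=0$ and $1-\bar v=(\varphi-y)/(\varphi-\psi+v)=0$ on $\{y=\varphi\}$, the $\tilde\xi^\varphi$-integral drops out and
\[
0\le\langle\xi,w-y\rangle=\int_\Omega\chi\,(1-\bar v)\,(\psi-v-y)\,\diff\tilde\xi_\psi .
\]
Since $\tilde\xi_\psi$ is carried by $A_\psi(f(u))=\{y=\psi\}$, there $\psi-v-y=-v\le0$ and $1-\bar v=(\varphi-\psi)/(\varphi-\psi+v)>0$ $\tilde\xi_\psi$-a.e., so the integrand is $\le0$ $\tilde\xi_\psi$-a.e.; hence it vanishes, i.e.\ $\chi v=0$ $\tilde\xi_\psi$-a.e. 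Exhausting $\Omega$ by compacts gives $v=0$ $\tilde\xi_\psi$-a.e., hence $\tilde\xi_\psi(\{v>0\})=0$ by \cref{C5:Lem:SignedMeasure2}. Then every $z\in H_0^1(\{v>0\})$ vanishes $\tilde\xi_\psi$-a.e., so by \cref{StrictlyActiveSet} it vanishes q.e.\ on $A_\psi^{\mathrm s}(f(u))$; applying this to $v$ (which lies in $H_0^1(\{v>0\})$) and using $v>0$ q.e.\ on $\{v>0\}$ proves the claim.

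To conclude statement~1, pick by \cref{Lem:Capacity} a function $v\in H_0^1(\Omega)_+$ with $\{v>0\}=\Omega\setminus A_\psi^{\mathrm s}(f(u_2))$ up to capacity zero, and replace $v$ by $\min(v,1)$ to make it bounded. By \cref{Lem:Invariance1}, $S_{\psi-v,f}^\varphi(u_2)=y_2$. Set $\tilde y_1:=S_{\psi-v,f}^\varphi(u_1)$; as $S_{\psi-v,f}^\varphi$ is again the solution operator of a problem of type \cref{BOP}, \cref{Lem:Monotonicity} gives $\tilde y_1\ge S_{\psi-v,f}^\varphi(u_2)=y_2\ge\psi$, hence $\tilde y_1\in K_\psi^\varphi$. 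Since $K_\psi^\varphi\subseteq K_{\psi-v}^\varphi$, restricting the variational inequality for $\tilde y_1$ to test functions in $K_\psi^\varphi$ identifies $\tilde y_1=S_{\psi,f}^\varphi(u_1)=y_1$ by uniqueness. Thus $S_{\psi,f}^\varphi(u_1)=S_{\psi-v,f}^\varphi(u_1)$, and the claim above with $u=u_1$ yields $\{v>0\}\cap A_\psi^{\mathrm s}(f(u_1))=\emptyset$, i.e.\ $A_\psi^{\mathrm s}(f(u_1))\subseteq\Omega\setminus\{v>0\}=A_\psi^{\mathrm s}(f(u_2))$. For statement~2, apply statement~1 to the pair $(-\varphi,-\psi)$ --- which satisfies \cref{AssumptionObstacles} since $-\psi-(-\varphi)=\varphi-\psi\ge c_\psi^\varphi$ --- with the identity control operator on $U=H^{-1}(\Omega)$: from $u_1\ge_U u_2$ we get $f(u_1)\ge f(u_2)$ in $H^{-1}(\Omega)$, hence $-f(u_2)\ge-f(u_1)$, so statement~1 gives $\tilde A_{-\varphi}^{\mathrm s}(-f(u_2))\subseteq\tilde A_{-\varphi}^{\mathrm s}(-f(u_1))$, and by \cref{Lem:ExchangedRole} this is $A_{\mathrm s}^\varphi(f(u_2))\subseteq A_{\mathrm s}^\varphi(f(u_1))$.

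The main obstacle is the converse of \cref{Lem:Invariance1}: one must choose the test function $w$ and use the uniform gap $\varphi-\psi\ge c_\psi^\varphi$ so that the $\tilde\xi^\varphi$-contribution cancels and the sign of the integrand over $\operatorname{supp}\tilde\xi_\psi$ forces $v$ to vanish $\tilde\xi_\psi$-a.e. The propagation step and the symmetrization for statement~2 are then straightforward.
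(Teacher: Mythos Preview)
Your proof is correct and takes a genuinely different route from the paper's.

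The paper argues by contradiction using differentiability with respect to the obstacle: assuming a set $W$ of positive capacity is strictly active for $u_1$ but only weakly active for $u_2$, it picks $v\in H_0^1(\mathcal U)_+$ with $\{v>0\}=\mathcal U\setminus A_\psi^{\mathrm s}(f(u_2))$ for a suitable quasi-open $\mathcal U\subseteq I^\varphi(f(u_1))$, parametrizes $t\mapsto S_{\psi-tv,f}^\varphi(u_1)$, rewrites this as $S_{\psi,\operatorname{id}}^\varphi(f(u_1)+tLv)-tv$, and invokes the directional derivative formula \cref{DirectionalDerivative}. Since $(S_{\psi,\operatorname{id}}^\varphi)'(f(u_1);Lv)$ vanishes q.e.\ on $A_{\mathrm s}(f(u_1))$, the derivative equals $-v<0$ on $W$, so $S_{\psi-v,f}^\varphi(u_1)<\psi$ on $W$; together with \cref{Lem:Invariance1} for $u_2$ this contradicts monotonicity of $S_{\psi-v,f}^\varphi$. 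Your approach avoids the directional derivative entirely: you first establish a clean converse of \cref{Lem:Invariance1} by adapting the test-function construction of \cref{C5:Lem:SignedMeasure4} with the modified weight $\bar v$, and then run a direct sandwich argument (lowering $\psi$ off $A_\psi^{\mathrm s}(f(u_2))$ leaves $y_2$ fixed; monotonicity forces $\tilde y_1\ge y_2\ge\psi$, hence $\tilde y_1\in K_\psi^\varphi$ and uniqueness gives $\tilde y_1=y_1$). This is more elementary---it uses only the measure decomposition and uniqueness, not the Mignot-type sensitivity theory---and the converse of \cref{Lem:Invariance1} is a useful byproduct. The paper's version, by contrast, makes the underlying mechanism (strictly active points are those where the solution responds to first order to obstacle perturbations) more transparent.
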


\begin{proof}
	ad 1.:
	Let 
	\begin{align}
	\label{Neighborhood}
	\mathcal{U}=\{S_{\psi,f}^\varphi(u_1)-\psi<(\varphi-\psi)/2\}.
	\end{align}
	Then $\mathcal{U}$
	is quasi-open and 
	$A_\psi^{\mathrm{s}}(f(u_1))\subseteq A_\psi(f(u_1))\subseteq \mathcal{U}\subseteq I^\varphi(f(u_1))$ holds.
	
	Assume $\mathcal{U}
	\setminus A_\psi^{\mathrm{s}}(f(u_2))\neq\emptyset$ (otherwise the assertion follows directly).
	Fix $v \in H_0^1(\mathcal{U})_+$ satisfying $\{v>0\}=\mathcal{U}
	\setminus A_\psi^{\mathrm{s}}(f(u_2))$,
	$v<(\varphi-\psi)/2$, see \cref{Lem:Capacity}.
	
	Let $y_v(t)=S_{\psi-tv, f}^\varphi(u_1)$, $t\in [0,1]$, and denote
	$\bar{y}_v(t):=y_v(t)+tv$.
	Then
	\begin{align*}
	&\langle Ly_v(t)-f(u_1),z-y_v(t)\rangle \geq 0 \quad \forall\,z \in
	K_{\psi-tv}^\varphi\\
	\iff &\langle L\bar{y}_v(t)-f(u_1)-tLv,\bar{z}-\bar{y}_v(t)\rangle \geq 0 \quad
	\forall \,\bar{z}\in K_\psi^{\varphi+tv}\\
	\implies &\langle L\bar{y}_v(t)-f(u_1)-tLv,\bar{z}-\bar{y}_v(t)\rangle \geq 0 \quad
	\forall \,\bar{z}\in K_\psi^{\varphi}.
	\end{align*}
	Now $\psi\le \bar{y}_v(t)
	= y_v(t)+tv\le y_v(0)+tv\leq\varphi$
	by \cref{lem:monobst} and the
	definition of $\mathcal{U}$ and $v$.
	
	Hence, $\bar{y}_v(t)\in K_\psi^{\varphi}$ and
	the last line shows that $y_v(t)=S_{\psi,\operatorname{id}}^\varphi(T(tv))-tv$ with
	$T \colon H_{0}^1(\Omega) \to H^{-1}(\Omega),
	v \mapsto f(u_1)+Lv$.
	Since $S_{\psi,\operatorname{id}}^\varphi$ is directionally differentiable in the Hadamard sense, 
	we can apply the chain rule for the directional derivatives
	and obtain
	\begin{align*}
	y_v'(0;1)&=(S_{\psi,\operatorname{id}}^\varphi)'(T(0);T'(0;v))-v
	=(S_{\psi,\operatorname{id}}^\varphi)'(f(u_1);Lv)-v.
	\end{align*}
	
	Since $(S_{\psi,\operatorname{id}}^\varphi)'(f(u_1);Lv)$ is $0$
	q.e.\ on the strictly active set $A_{\mathrm{s}}(f(u_1))$,
	compare \cref{Subsec:DifferentiabilityProperties} and,
	in particular,
	\cref{Lem:CriticalCone},
	we have $y_v'(0;1)<0$ 
	q.e.\ on $A_{\mathrm{s}}(f(u_1))\cap \{v>0\}$.
	
	Thus, by reducing the lower obstacle on a subset of $A_\psi^{\mathrm{s}}(f(u_1))$
	the solution with respect to the new obstacle will drop on this set.
	
	Now, we show the statement of the lemma by contradiction.
	Therefore, 
	assume the set $W \subseteq \Omega$ is a set of positive capacity
	which is (lower) weakly active for $u_2$ 
	and (lower) strictly active for $u_1$,
	more precisely,
	$W \subseteq A_\psi^{\mathrm{s}}(f(u_1)) \subseteq A_\psi(f(u_1))\subseteq A_\psi(f(u_2))$
	and $W \subseteq \Omega \setminus A_\psi^\mathrm{s}(f(u_2))$
	(up to a set of capacity zero).
	Then $\mathcal{U}$ as in \cref{Neighborhood} is
	a quasi-open neighborhood of $W$
	contained in $I^\varphi(f(u_1))$.
	
	Let as above $v \in H_0^1(\mathcal{U})_+$ satisfy $\{v>0\}=\mathcal{U} \setminus A_\psi^{\mathrm{s}}(f(u_2))$.
	Then, \cref{Lem:Invariance1} yields
	\begin{align} 	
	\label{Vergleich1}
	S_{\psi-v,f}^\varphi(u_2)=S_{\psi,f}^\varphi(u_2)
	\end{align}
	and on $W$ we have
	\begin{align}
	\label{Vergleich2}
	S_{\psi-v,f}^\varphi(u_1)|_{W}<S_{\psi,f}^\varphi(u_1)|_{W}
	=S_{\psi,f}^\varphi(u_2)|_{W}
	\end{align}
	by the structure of the directional derivative
	with respect to the obstacle.
	Putting \cref{Vergleich1,Vergleich2} together, we see that
	\begin{align*}
	S_{\psi-v,f}^\varphi(u_2)>S_{\psi-v,f}^\varphi(u_1)
	\end{align*}
	on $W$.
	On the other hand, $S_{\psi-v,f}^\varphi(u_1)\geq S_{\psi-v,f}^\varphi(u_2)$ since $u_1 \geq_U u_2$.
	Thus,
	such a set $W$ cannot exist and we conclude $A_\psi^{\mathrm{s}}(u_1) \subseteq A_\psi^{\mathrm{s}}(u_2)$.\\
	
	ad 2.:
	By \cref{Lem:ExchangedRole},
	we have $A_{\mathrm{s}}^\varphi(f(u_i))=\tilde{A}_{-\varphi}^{\mathrm{s}}(-f(u_i))$
	for $i=1,2$,
	where we use a similar notation as in \cref{Lem:ExchangedRole}.
	Now, the first part of the lemma implies the statement, since
	\begin{align*}
	A_{\mathrm{s}}^{\varphi}(f(u_1))=
	\tilde{A}_{-\varphi}^{\mathrm{s}}(-f(u_1))\supseteq \tilde{A}_{-\varphi}^{\mathrm{s}}(-(f(u_2))
	=A_{\mathrm{s}}^{\varphi}(f(u_2)).
	\end{align*}
\end{proof}

\section{Mosco Convergence}\label{Sec:Mosco}

For the rest of the paper,
we use again the notation $S_f$ for the solution operator of \cref{BOP}.

The following definition goes back to \cite{Mosco}.
In this form,
the definition can be found, e.g., 
in \cite[Ch.~4:4]{Rodrigues}.

\begin{definition}
	We say that a sequence $(C_n)_{n \in \mathbb{N}}$ 
	of nonempty, closed, convex subsets of a Banach space $X$
	converges to a set $C \subseteq X$ in the sense of Mosco
	if the following two conditions hold.
	\begin{enumerate}
		\item 
		For each $x \in C$ 
		there exists a sequence $(x_n)_{n \in \mathbb{N}}$
		such that $x_n \in C_n$ holds for every $n \in \mathbb{N}$
		and such that $x_n \to x$ in $X$.
		\item
		For each subsequence $(x_{n_k})_{k \in \mathbb{N}}$ of a sequence
		$(x_n)_{n \in \mathbb{N}} \subseteq X$
		fulfilling $x_n \in C_n$ for all $n \in \mathbb{N}$
		such that for some $x \in X$
		we have $x_{n_k} \rightharpoonup x$ in $X$,
		the weak limit $x$ is in $C$.
	\end{enumerate}
\end{definition}

Based on this definition,
the following result on convergence of solutions of
variational inequalities can be established.
It is taken from \cite[Thm.~4.1]{Rodrigues},
see also \cite[Prop.~35.]{Mosco}.
\begin{lemma}
	\label{Lem:RodriguesMosco}
	Assume $L \in \mathcal{L}(H_0^1(\Omega),H^{-1}(\Omega))$ is coercive
	and let $C_n$ and $C$ be nonempty closed, convex subsets of $H_0^1(\Omega)$, $n \in \mathbb{N}$,
	such that $C_n \to C$ in the sense of Mosco.
	Furthermore, let $(h_n)_{n \in \mathbb{N}}, h \subseteq H^{-1}(\Omega)$ with $h_n \to h$.
	Then the solutions of 
	\begin{align*}
	\text{Find } \eta_n \in C_n:\quad
	\langle L\eta_n-h_n,z-\eta_n\rangle \geq 0
	\quad \forall\, z \in C_n
	\end{align*}
	converge strongly in $H_0^1(\Omega)$ to the solution of
	\begin{align*}
	\text{Find } \eta \in C:\quad
	\langle L\eta-h,z-\eta\rangle \geq 0
	\quad \forall\, z \in C.
	\end{align*}
\end{lemma}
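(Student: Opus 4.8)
Proof proposal. The plan is to follow the classical argument of Mosco, adapted to a coercive but not necessarily symmetric operator $L$, so that no underlying minimization structure is used. Throughout, let $\alpha>0$ denote the coercivity constant, i.e.\ $\langle Lz,z\rangle\ge\alpha\|z\|_{H_0^1(\Omega)}^2$ for all $z\in H_0^1(\Omega)$. Existence and uniqueness of $\eta_n$ and $\eta$ are the standard Lions--Stampacchia result for a coercive operator and a nonempty closed convex set, so I would only recall this and then concentrate on the convergence.

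First I would establish a uniform bound on $(\eta_n)_{n\in\mathbb{N}}$ in $H_0^1(\Omega)$. Fix any $x\in C$; by the first Mosco condition there is a sequence $x_n\in C_n$ with $x_n\to x$ in $H_0^1(\Omega)$, so $(x_n)_{n\in\mathbb{N}}$ is bounded. Testing the variational inequality for $\eta_n$ with $z=x_n$ gives $\langle L\eta_n,\eta_n\rangle\le\langle L\eta_n,x_n\rangle+\langle h_n,\eta_n-x_n\rangle$, whence by coercivity, boundedness of $(x_n)_{n\in\mathbb{N}}$ and of the convergent sequence $(h_n)_{n\in\mathbb{N}}$, $\alpha\|\eta_n\|_{H_0^1(\Omega)}^2\le C_1\|\eta_n\|_{H_0^1(\Omega)}+C_2$ with $C_1,C_2$ independent of $n$; hence $(\eta_n)_{n\in\mathbb{N}}$ is bounded.

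Next I would argue along subsequences. Let $(\eta_{n_k})_{k\in\mathbb{N}}$ be an arbitrary subsequence; by boundedness it has a further subsequence, not relabeled, with $\eta_{n_k}\rightharpoonup\bar\eta$ in $H_0^1(\Omega)$, and the second Mosco condition yields $\bar\eta\in C$. To upgrade this to strong convergence I would use the first Mosco condition again to pick $w_k\in C_{n_k}$ with $w_k\to\bar\eta$, test the variational inequality for $\eta_{n_k}$ with $z=w_k$ to obtain $\langle L\eta_{n_k},\eta_{n_k}\rangle\le\langle L\eta_{n_k},w_k\rangle+\langle h_{n_k},\eta_{n_k}-w_k\rangle$, and insert this into the coercivity estimate
\begin{align*}
\alpha\|\eta_{n_k}-\bar\eta\|_{H_0^1(\Omega)}^2
&\le\langle L(\eta_{n_k}-\bar\eta),\eta_{n_k}-\bar\eta\rangle\\
&=\langle L\eta_{n_k},\eta_{n_k}\rangle-\langle L\eta_{n_k},\bar\eta\rangle-\langle L\bar\eta,\eta_{n_k}\rangle+\langle L\bar\eta,\bar\eta\rangle.
\end{align*}
Since $\eta_{n_k}\rightharpoonup\bar\eta$ implies $L\eta_{n_k}\rightharpoonup L\bar\eta$ in $H^{-1}(\Omega)$, and since $w_k\to\bar\eta$ strongly and $h_{n_k}\to h$, every term on the right converges to $\langle L\bar\eta,\bar\eta\rangle$ except the $h$-term, which converges to $\langle h,\bar\eta-\bar\eta\rangle=0$; the four remaining limits cancel, so $\limsup_k\alpha\|\eta_{n_k}-\bar\eta\|_{H_0^1(\Omega)}^2\le0$, i.e.\ $\eta_{n_k}\to\bar\eta$ strongly.

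Finally I would pass to the limit in the variational inequality. For arbitrary $z\in C$ choose $z_k\in C_{n_k}$ with $z_k\to z$; from $\langle L\eta_{n_k}-h_{n_k},z_k-\eta_{n_k}\rangle\ge0$ and the now available strong convergence $\eta_{n_k}\to\bar\eta$ — which in particular makes $\langle L\eta_{n_k},\eta_{n_k}\rangle\to\langle L\bar\eta,\bar\eta\rangle$ — one obtains $\langle L\bar\eta-h,z-\bar\eta\rangle\ge0$ for all $z\in C$; together with $\bar\eta\in C$ and uniqueness this gives $\bar\eta=\eta$. Since every subsequence of $(\eta_n)_{n\in\mathbb{N}}$ thus has a further subsequence converging strongly to the same limit $\eta$, the whole sequence converges strongly to $\eta$, which is the assertion. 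The one point that needs care — and where the non-symmetry of $L$ would otherwise bite — is the strong-convergence step: I avoid invoking weak lower semicontinuity of $z\mapsto\langle Lz,z\rangle$ by working directly with the coercivity inequality for $\eta_{n_k}-\bar\eta$ together with an admissible test element $w_k\in C_{n_k}$ approaching $\bar\eta$, after which all remaining limits are routine weak--strong pairings.
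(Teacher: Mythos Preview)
Your argument is correct and is essentially the standard Mosco/Lions--Stampacchia stability proof: uniform a priori bound via a recovery sequence, weak subsequential limit in $C$ by the second Mosco condition, upgrade to strong convergence using coercivity of $L$ together with a recovery sequence $w_k\to\bar\eta$, then pass to the limit and conclude by uniqueness and a subsequence--subsequence argument. All the weak--strong pairings you invoke are legitimate, and you correctly avoid any appeal to weak lower semicontinuity of $z\mapsto\langle Lz,z\rangle$.

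There is nothing to compare with, though: the paper does not prove this lemma but simply quotes it from the literature (Rodrigues, Thm.~4.1; Mosco, Prop.~35). So your proposal supplies a full, correct proof where the paper only gives a reference.
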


Based on this tool,
in order to obtain the convergence of the G\^ateaux derivatives,
which can be characterized as solutions to variational equations,
see \cref{Theo:GateauxDerivative},
and in order to characterize the limit,
we establish the Mosco convergence of the sets $H_0^1(D_n)$.
Depending on either the choice $D_n=I(f(u_n))\cup A_{\mathrm{w}}^\varphi(f(u_n))$
or $D_n=I(f(u_n))\cup A_\psi^{\mathrm{w}}(f(u_n))$,
i.e., 
depending on whether we focus on the inactive set or the complement of the strictly active set
with respect to either the upper or the lower obstacle,
sequences with different monotone behavior have to be considered.

\begin{lemma}
	\label{Lem:MoscoConvergence}
	Suppose \cref{AssumptionObstacles} is satisfied.
	\begin{enumerate}
		\item 
		Let $(u_n)_{n \in \mathbb{N}} \subseteq U$ 
		be an increasing sequence with $u_n \to u$.
		Then $H_0^1(I(f(u_n)) \cup A_{\mathrm{w}}^\varphi(f(u_n))) \to H_0^1(I(f(u))\cup A_{\mathrm{w}}^\varphi(f(u)))$ in the sense of Mosco.
		
		\item
		Let $(u_n)_{n \in \mathbb{N}} \subseteq U$
		be a decreasing sequence with $u_n \to u$.
		Then $H_0^1(I(f(u_n)) \cup A_\psi^{\mathrm{w}}(f(u_n))) \to H_0^1(I(f(u))\cup A_\psi^{\mathrm{w}}(f(u)))$ in the sense of Mosco.
	\end{enumerate}
\end{lemma}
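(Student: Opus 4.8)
The plan is to reduce the statement to the Mosco convergence of the spaces associated to the quasi-open sets $D_n:=I(f(u_n))\cup A_{\mathrm{w}}^\varphi(f(u_n))$ and $D:=I(f(u))\cup A_{\mathrm{w}}^\varphi(f(u))$, and to verify the two Mosco conditions directly. First I would note that \cref{AssumptionObstacles} forces $\operatorname{cap}(A_\psi(\zeta)\cap A^\varphi(\zeta))=0$, which together with $A_{\mathrm{s}}^\varphi(\zeta)\subseteq A^\varphi(\zeta)$ gives $D_n=\Omega\setminus(A_\psi(f(u_n))\cup A_{\mathrm{s}}^\varphi(f(u_n)))$ and $D=\Omega\setminus(A_\psi(f(u))\cup A_{\mathrm{s}}^\varphi(f(u)))$. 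For part~(1), the relevant monotonicity is: since $(u_n)$ is increasing with $u_n\to u$ (so that $u_n\le_U u_{n+1}\le_U u$), \cref{Lem:Monotonicity} shows $(S_f(u_n))_n$ is increasing in $H_0^1(\Omega)$ with $S_f(u_n)\le S_f(u)$, hence $A_\psi(f(u))\subseteq A_\psi(f(u_{n+1}))\subseteq A_\psi(f(u_n))$, while \cref{Lem:MonotonicityStrictlyActiveSets} gives $A_{\mathrm{s}}^\varphi(f(u_n))\subseteq A_{\mathrm{s}}^\varphi(f(u_{n+1}))\subseteq A_{\mathrm{s}}^\varphi(f(u))$. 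Using $S_f(u_n)\to S_f(u)$ in $H_0^1(\Omega)$ and a pointwise-q.e.\ convergent subsequence (\cref{Lem:Capacity}), one checks that on $\bigcap_n A_\psi(f(u_n))$ one has $S_f(u)=\psi$, so $\bigcap_n A_\psi(f(u_n))=A_\psi(f(u))$ up to capacity zero; thus $(\Omega\setminus A_\psi(f(u_n)))_n$ is an increasing quasi-covering of $\Omega\setminus A_\psi(f(u))$.

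For the first Mosco condition, given $z\in H_0^1(D)$ I would set $G_n:=D\cap(\Omega\setminus A_\psi(f(u_n)))$. These sets are quasi-open and increasing, form a quasi-covering of $D$ (since $D\subseteq\Omega\setminus A_\psi(f(u))$), and satisfy $G_n\subseteq D_n$, because $G_n\subseteq D\subseteq\Omega\setminus A_{\mathrm{s}}^\varphi(f(u))\subseteq\Omega\setminus A_{\mathrm{s}}^\varphi(f(u_n))$ and $G_n\subseteq\Omega\setminus A_\psi(f(u_n))$; hence part~3 of \cref{Lem:Capacity} yields $z_n\in H_0^1(G_n)\subseteq H_0^1(D_n)$ with $z_n\to z$. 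For the second Mosco condition, let $z_{n_k}\in H_0^1(D_{n_k})$ with $z_{n_k}\rightharpoonup z$ in $H_0^1(\Omega)$. Since $A_\psi(f(u))\subseteq A_\psi(f(u_{n_k}))$ and $\{w\in H_0^1(\Omega)\mid w=0\text{ q.e.\ on }A_\psi(f(u))\}$ is weakly closed, $z=0$ q.e.\ on $A_\psi(f(u))$. Moreover, for each fixed $j$ all $z_{n_k}$ with $k\ge j$ vanish q.e.\ on $A_{\mathrm{s}}^\varphi(f(u_{n_j}))$, so the same argument gives $z=0$ q.e.\ on $A_{\mathrm{s}}^\varphi(f(u_{n_j}))$ for every $j$, i.e.\ $z=0$ q.e.\ on $A_\infty:=\bigcup_n A_{\mathrm{s}}^\varphi(f(u_n))$. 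Since $A_\infty\subseteq A_{\mathrm{s}}^\varphi(f(u))$, it remains only to show $\operatorname{cap}(A_{\mathrm{s}}^\varphi(f(u))\setminus A_\infty)=0$, which then yields $z\in H_0^1(D)$.

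This last claim is, I expect, the main obstacle, since the $D_n$ are genuinely non-monotone and one must replace them by the (monotone) strictly active sets. I would prove it by contradiction, mimicking the proof of \cref{Lem:MonotonicityStrictlyActiveSets}: assume $z$ does not vanish q.e.\ on $A_{\mathrm{s}}^\varphi(f(u))$ and put $v:=\min(|z|,1)\in H_0^1(\Omega)_+\cap L^\infty(\Omega)$, so that $v=0$ q.e.\ on $A_\infty$ while $\operatorname{cap}(\{v>0\}\cap A_{\mathrm{s}}^\varphi(f(u)))>0$. For $t\in(0,c_\psi^\varphi)$ the obstacles $\psi,\varphi+tv$ again satisfy \cref{AssumptionObstacles}; and since $\{v>0\}\subseteq\Omega\setminus A_\infty\subseteq\Omega\setminus A_{\mathrm{s}}^\varphi(f(u_n))$ for every $n$, the analogue of \cref{Lem:Invariance1} for the upper obstacle (proved symmetrically) gives $S_{\psi,f}^{\varphi+tv}(u_n)=S_{\psi,f}^\varphi(u_n)$; letting $n\to\infty$ and using continuity of the solution operator yields $S_{\psi,f}^{\varphi+tv}(u)=S_{\psi,f}^\varphi(u)$ for all such $t$. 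On the other hand, writing $\bar y(t):=S_{\psi,f}^{\varphi+tv}(u)-tv$, the perturbed obstacle problem transforms (exactly as in \cref{Lem:MonotonicityStrictlyActiveSets}, with the bounds on $t$ and $\|v\|_{L^\infty}$ guaranteeing $\bar y(t)\in K_\psi^\varphi$) into $\bar y(t)=S_{\psi,\operatorname{id}}^\varphi(f(u)-tLv)$, so the chain rule gives that the one-sided derivative of $t\mapsto S_{\psi,f}^{\varphi+tv}(u)$ at $t=0$ equals $(S_{\psi,\operatorname{id}}^\varphi)'(f(u);-Lv)+v$; since the first summand vanishes q.e.\ on $A_{\mathrm{s}}(f(u))$ (\cref{Lem:CriticalCone}), this derivative equals $v>0$ q.e.\ on $\{v>0\}\cap A_{\mathrm{s}}^\varphi(f(u))$. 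Consequently $S_{\psi,f}^{\varphi+tv}(u)>S_{\psi,f}^\varphi(u)$ on a set of positive capacity for $t>0$ small, contradicting the identity above. This completes part~(1).

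Finally, part~(2) follows by the symmetry of \cref{Lem:ExchangedRole}. The proof of (1) uses only that $(f(u_n))_n$ is increasing in $H^{-1}(\Omega)$ and converges to $f(u)$, together with the results of \cref{Sec:DifferentiabilityProperties} and \cref{Sec:Monotonicity}. If $(u_n)_n$ is decreasing with $u_n\to u$, then $(f(u_n))_n$ is decreasing (\cref{Assumption}), hence $(-f(u_n))_n$ is increasing with limit $-f(u)$; by \cref{Lem:ExchangedRole}, $S_{-\varphi,\operatorname{id}}^{-\psi}(-f(u_n))=-S_{\psi,f}^\varphi(u_n)$, the associated inactive set equals $I(f(u_n))$, and the weak active set with respect to the upper obstacle $-\psi$ equals $A_\psi^{\mathrm{w}}(f(u_n))$ (and likewise in the limit). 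Thus $I(f(u_n))\cup A_\psi^{\mathrm{w}}(f(u_n))$ is precisely the set playing the role of ``$D_n$'' for the bilateral obstacle problem with obstacles $-\varphi,-\psi$ and the increasing sources $-f(u_n)$, and applying part~(1) in this source formulation gives $H_0^1(I(f(u_n))\cup A_\psi^{\mathrm{w}}(f(u_n)))\to H_0^1(I(f(u))\cup A_\psi^{\mathrm{w}}(f(u)))$ in the sense of Mosco.
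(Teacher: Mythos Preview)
Your treatment of the first Mosco condition and the reduction of part~(2) to part~(1) via \cref{Lem:ExchangedRole} are correct and match the paper. The second Mosco condition, however, has a gap in your obstacle-perturbation step. You claim that ``bounds on $t$ and $\|v\|_{L^\infty}$'' guarantee $\bar y(t)=S_{\psi,f}^{\varphi+tv}(u)-tv\in K_\psi^\varphi$, but the delicate constraint is $\bar y(t)\ge\psi$: monotonicity in the upper obstacle only gives $\bar y(t)\ge S_f(u)-tv$, so you need $S_f(u)-\psi\ge tv$ q.e. In \cref{Lem:MonotonicityStrictlyActiveSets} the analogous bound works precisely because $v$ is supported in the neighborhood~$\mathcal U$, not merely because $v$ is bounded. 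With $v=\min(|z|,1)$ the support of $v$ may approach $A_\psi(f(u))$, where $S_f(u)-\psi$ is arbitrarily small, and no uniform $t>0$ does the job. The fix is easy---replace $v$ by $\min(|z|,(S_f(u)-\psi)/2)$, which still satisfies $\{v>0\}\subseteq\Omega\setminus A_\infty$ and still meets $A_{\mathrm{s}}^\varphi(f(u))$ in positive capacity (since $S_f(u)-\psi=\varphi-\psi\ge c_\psi^\varphi$ there)---but as written the transformation is not justified. A smaller point: you announce the set identity $\operatorname{cap}(A_{\mathrm{s}}^\varphi(f(u))\setminus A_\infty)=0$ but then (correctly) only argue that the particular limit $z$ vanishes on $A_{\mathrm{s}}^\varphi(f(u))$; the latter is all that is needed.

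For comparison, the paper avoids perturbing the obstacle entirely and gives a shorter argument for the second condition. Since each $z_{n_k}$ vanishes q.e.\ on both $A_\psi(f(u_{n_k}))$ and $A_{\mathrm{s}}^\varphi(f(u_{n_k}))$, \cref{Lem:0OnStrictlyActiveSet} yields
\[
\langle LS_f(u_{n_k})-f(u_{n_k}),\,|z_{n_k}|\rangle
=\int_\Omega |z_{n_k}|\,\diff\tilde\xi_\psi^{n_k}-\int_\Omega |z_{n_k}|\,\diff\tilde\xi^{\varphi}_{n_k}=0.
\]
Passing to the limit with $|z_{n_k}|\rightharpoonup|z|$ in $H_0^1(\Omega)$ and $LS_f(u_{n_k})-f(u_{n_k})\to LS_f(u)-f(u)$ in $H^{-1}(\Omega)$ gives $\langle LS_f(u)-f(u),|z|\rangle=0$. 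Because $z=0$ q.e.\ on $A_\psi(f(u))$ (which you already have), this reduces via \cref{Lem:0OnStrictlyActiveSet} to $\int_\Omega |z|\,\diff\tilde\xi^\varphi=0$, hence $z=0$ q.e.\ on $A_{\mathrm{s}}^\varphi(f(u))$. This duality argument exploits the measure decomposition of \cref{Lem:SignedMeasure} directly and bypasses the obstacle-perturbation machinery.
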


\begin{proof}
	ad 1.:
	Assume $v \in H_0^1(I(f(u))\cup A_{\mathrm{w}}^\varphi(f(u)))$ and w.l.o.g.\ $v \geq 0$.
	We can rewrite the function space as
	\begin{align*}
	&H_0^1(I(f(u))\cup A_{\mathrm{w}}^\varphi(f(u)))\\
	&=\{z \in H_0^1(\Omega)\mid z=0 \text{ q.e.\ on } A_\psi(f(u)) \text{ and } z=0 \text{ q.e.\ on } A_{\mathrm{s}}^\varphi(f(u))\}.
	\end{align*}
	Since $A_{\mathrm{s}}^\varphi(f(u_n)) \subseteq A_{\mathrm{s}}^\varphi(f(u))$ for all $n \in \mathbb{N}$,
	see \cref{Lem:MonotonicityStrictlyActiveSets},
	it holds $v=0$ q.e.\ on $A_{\mathrm{s}}^\varphi(f(u_n))$ for all $n \in \mathbb{N}$.
	Since $S_f(u_n) \to S_f(u)$ in $H_0^1(\Omega)$ by continuity of $S_f$,
	we have $S_f(u_n) \to S_f(u)$ for a subsequence pointwise quasi everywhere,
	see \cref{Lem:Capacity}.
	This means
	\begin{align*}
	\operatorname{cap}\left(I_\psi(f(u)) \setminus \bigcup_{k \in \mathbb{N}}I_\psi(f(u_k))\right)=0,
	\end{align*}
	i.e., $(I_\psi(f(u_n)))_{n \in \mathbb{N}}$ is a quasi-covering of $I_\psi(f(u))$, which is increasing in $n$.
	We can therefore find nonnegative $v_n \in H_0^1(\Omega)$ with $v_n \to v$ and $v_n=0$ q.e.\ on $A_\psi(f(u_n))$,
	see \cref{Lem:Capacity}.
	By setting 
	\begin{align*}
	z_n:=\min(v_n,v)
	\end{align*}
	we have $z_n \in H_0^1(I(f(u_n))\cup A_{\mathrm{w}}^\varphi(f(u_n)))$ 
	for all $n \in \mathbb{N}$
	as well as $z_n \to v$.
	
	Let $v_n \in H_0^1(I(f(u_n))\cup A_{\mathrm{w}}^\varphi(f(u_n)))$ for all $n \in \mathbb{N}$.
	Assume there is a subsequence $(v_{n_k})_{k \in \mathbb{N}}$
	with $v_{n_k} \rightharpoonup v$ for some $v \in H_0^1(\Omega)$ as $k \to \infty$.
	Since $A_\psi(f(u)) \subseteq A_\psi(f(u_n))$ for all $n \in \mathbb{N}$,
	we conclude $v \in H_0^1(I_\psi(f(u)))$ by Mazur's lemma.
	By \cref{Lem:0OnStrictlyActiveSet} and \cref{Lem:SignedMeasure},
	from $v_n=0$ q.e.\ on $A_{\mathrm{s}}^\varphi(f(u_n))$ and $v_n=0$ q.e.\ on $A_\psi(f(u_n))$ it follows 
	\begin{align*}
	\langle LS_f(u_n)-f(u_n),|v_n|\rangle=\int_{\Omega} |v_n| \,\diff\tilde{\xi}_\psi^n-\int_{\Omega} |v_n| \,\diff\tilde{\xi}_n^\varphi=0
	\end{align*}
	for all $n \in \mathbb{N}$.
	Here, $\tilde{\xi}_\psi^n$ and $\tilde{\xi}_n^\varphi$ are the measures 
	as in \cref{Lem:SignedMeasure}
	that belong to the functional
	$LS_{\psi,f}^\varphi(u_n)-f(u_n)=LS_{\psi,\operatorname{id}}^\varphi(f(u_n))-f(u_n)$.
	From $v_{n_k} \rightharpoonup v$ in $H_0^1(\Omega)$
	we conclude $|v_{n_k}| \rightharpoonup |v|$ in $H_0^1(\Omega)$. 
	(To see this, one can use the compact embedding $H_0^1(\Omega) \hookrightarrow L^2(\Omega)$
	and $\||z|\|_{H_0^1(\Omega)} = \|z\|_{H_0^1(\Omega)}$ for all $z \in H_0^1(\Omega)$, see \cite[Cor.~5.8.1]{AttouchButtazzoMichaille}.)
	Since also $LS_f(u_{n_k})-f(u_{n_k}) \to LS_f(u)-f(u)$ in $H^{-1}(\Omega)$
	we conclude
	\begin{align*}
	0=\langle LS_f(u)-f(u),|v|\rangle=-\int_\Omega |v|\,\diff\tilde{\xi}^\varphi
	\end{align*}
	since $v=0$ q.e.\ on $A_\psi(f(u))$.
	Finally,
	$v=0$ $\tilde{\xi}^\varphi$-a.e.\ on $\Omega$
	and thus
	$v=0$ q.e.\ on $A_{\mathrm{s}}^\varphi(f(u))$, see \cref{Lem:0OnStrictlyActiveSet}.\\
	
	ad 2.:
	Again, this part of the lemma follows from 
	the first part of the lemma combined with
	\cref{Lem:ExchangedRole}.		
\end{proof}	

\section{Generalized Derivatives for the Bilateral Obstacle Problem}\label{Sec:GenDer}

In this section,
we will find a characterization of two generalized derivatives for the solution operator $S_f$
of \cref{BOP}.
To establish this result, we impose the monotonicity assumption \cref{Assumption} on 
$U$ and $f$ stated in the introduction.

As already indicated in the introduction, the assumptions posed on the positive cone in $V$
will ensure that we can construct monotone convergent sequences in $U$ where the G\^ateaux differentiability
of the locally Lipschitz continuous solution operator $S_f$
can be guaranteed.
The tool that is used is the following generalization of Rademacher's theorem
to infinite dimensions,
see e.g.\ \cite[Ch.~II,~Sect.2,~Thm.~1]{Aronszajn}, \cite[Thm.~6.42]{BenyaminiLindenstrauss}.
If the space $X$ in \cref{Thm:Rademacher} is additionally a Hilbert space,
a version can be found in \cite[Thm.~1.2]{Mignot}.

\begin{theorem}
	\label{Thm:Rademacher}
	Assume $T \colon X \to Y$ is locally Lipschitz continuous
	from a separable Banach space $X$
	to a Hilbert space $Y$.
	Then the set $\mathcal{D}_T$ of points at which $T$ is G\^ateaux differentiable
	is a dense subset of $X$.
\end{theorem}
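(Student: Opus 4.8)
The plan is to reduce the statement to the globally Lipschitz case and then invoke the theory of Aronszajn (equivalently Gaussian or Haar) null sets. The key point to exploit first is that Gâteaux differentiability at a point is a purely local property: if $B \subseteq X$ is an open ball on which $T$ is Lipschitz, then $x \in B$ lies in $\mathcal{D}_T$ if and only if $T|_B$ is Gâteaux differentiable at $x$. Since $X$ is covered by such balls, it suffices to show that for each open ball $B$ the set of Gâteaux points of $T|_B$ is dense in $B$. Extending $T|_B$ to a globally $L$-Lipschitz map $\tilde T \colon X \to Y$ — which is possible because $Y$ is a Hilbert space, e.g. by Kirszbraun's theorem — one is reduced to showing: for a globally Lipschitz $T \colon X \to Y$, the set $\mathcal{D}_T$ meets every nonempty open subset of $X$.

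Next I would introduce Aronszajn null subsets of $X$. Since $X$ is separable, fix a sequence $(e_n)_{n \in \mathbb{N}}$ whose linear span is dense. For $e \neq 0$, call a Borel set $N \subseteq X$ \emph{null on lines parallel to $e$} if $\lambda^1(\{t \in \mathbb{R} \mid x + t e \in N\}) = 0$ for every $x \in X$, and call a Borel set $A$ \emph{Aronszajn null} if $A = \bigcup_{n \in \mathbb{N}} A_n$ with each $A_n$ null on lines parallel to $e_n$. The facts I would use are: (i) the Aronszajn null sets form a $\sigma$-ideal that does not depend on the chosen sequence $(e_n)$; and (ii) no nonempty open subset of $X$ is Aronszajn null, so the complement of any Aronszajn null set is dense. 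Property (ii) follows from a Fubini-type argument showing that a nonempty open set cannot be written as a countable union of sets each null on lines in one of the directions $e_n$.

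The conclusion then rests on Aronszajn's theorem: if $X$ is separable and the target space $Y$ has the Radon–Nikodým property, then every Lipschitz map $T \colon X \to Y$ is Gâteaux differentiable outside an Aronszajn null set. Hilbert spaces are reflexive and therefore have the Radon–Nikodým property, so $X \setminus \mathcal{D}_T$ is Aronszajn null and $\mathcal{D}_T$ is dense by (ii); together with the localization step this proves the theorem. (If one only needs $X$ itself a Hilbert space, the more self-contained argument of Mignot can be used instead, cf.\ \cite[Thm.~1.2]{Mignot}.)

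The main obstacle is Aronszajn's theorem itself, whose proof I would organize in two stages. Stage one: for each fixed $n$, the directional derivative $\partial_{e_n} T(x) = \lim_{t \to 0} t^{-1}\big(T(x + t e_n) - T(x)\big)$ exists for all $x$ outside a set $N_n$ null on lines parallel to $e_n$; this is precisely where the Radon–Nikodým property is invoked, since it guarantees that the Lipschitz curves $t \mapsto T(x + t e_n)$ are differentiable for $\lambda^1$-a.e.\ $t$. Stage two, the delicate part: on $X \setminus \bigcup_n N_n$ the assignment $h \mapsto \partial_h T(x)$ is well defined and additive on the dense subspace $\operatorname{span}\{e_n\}$, and one must upgrade this to genuine Gâteaux differentiability of $T$ at $x$ after discarding a further Aronszajn null set. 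This requires uniform control of the remainder $\|T(x + t h) - T(x) - t\,\partial_h T(x)\|$ as $h$ ranges over directions, combining the Lipschitz bound with a careful measure-theoretic selection of the exceptional directions; this is the technically hardest point and is the reason the paper cites \cite{Aronszajn,BenyaminiLindenstrauss} rather than reproducing the argument.
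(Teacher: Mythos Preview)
Your proposal is correct and matches the paper's approach: the paper does not prove this theorem either but cites \cite{Aronszajn,BenyaminiLindenstrauss} and adds exactly the same localization remark (``By considering neighborhoods of points separately''), so your reduction plus invocation of Aronszajn's theorem is precisely what is intended. The only superfluous step is the Kirszbraun extension: since Aronszajn's result already applies to Lipschitz maps defined on open subsets of $X$, you can work directly with $T|_B$ without extending to all of $X$.
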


In \cite{Aronszajn},
the map $T$ is Lipschitz continuous and defined on an open subset of $X$.
By considering neighborhoods of points separately, 
the formulation as in \cref{Thm:Rademacher} can be obtained.

Now, we can formulate the main theorem of this paper.

\begin{theorem}
	\label{Theo:MainResult}
	Suppose that \cref{AssumptionObstacles} and \cref{Assumption} are satisfied and 
	let $u \in U$ be arbitrary.
	Then a Bouligand generalized derivative for $S_f$
	in $u$ is given by
	the operator $\Xi(u;\cdot) \in \mathcal{L}(U,H_0^1(\Omega))$,
	where $\Xi(u;h)$ is the unique solution
	of the variational equation
	\begin{align}
	\label{VEGeneralizedDerivative}
	\text{Find } \eta \in H_0^1(D):\quad
	\langle L\eta-f'(u;h),z\rangle = 0
	\quad \forall\, z \in H_0^1(D).
	\end{align}
	Here,
	the sets
	\begin{align*}
	D:=I(f(u))\cup A_{\mathrm{w}}^\varphi(f(u)) \quad \text{ and } \quad D:=I(f(u))\cup A_\psi^{\mathrm{w}}(f(u))
	\end{align*}
	can be chosen and result in generally different generalized derivatives.
\end{theorem}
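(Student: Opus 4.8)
It suffices to treat $D:=I(f(u))\cup A_{\mathrm{w}}^\varphi(f(u))$; the case $D:=I(f(u))\cup A_\psi^{\mathrm{w}}(f(u))$ follows symmetrically, either by replacing the increasing sequence below by a decreasing one or by applying the first case to $S_{-\varphi,f}^{-\psi}$ and transferring the result via \cref{Lem:ExchangedRole}. First I would record that $D=\Omega\setminus\bigl(A_\psi(f(u))\cup A_{\mathrm{s}}^\varphi(f(u))\bigr)$ is quasi-open (the set removed is quasi-closed) and that $I(f(u))\subseteq D\subseteq\Omega\setminus A_{\mathrm{s}}(f(u))$: the first inclusion is immediate, and for the second one uses \cref{AssumptionObstacles}, which gives $\varphi-\psi\ge c_\psi^\varphi>0$ q.e.\ and hence $A_\psi(f(u))\cap A^\varphi(f(u))=\emptyset$, so that $A_{\mathrm{w}}^\varphi(f(u))$ is disjoint from $A_{\mathrm{s}}(f(u))=A_\psi^{\mathrm{s}}(f(u))\cup A_{\mathrm{s}}^\varphi(f(u))$. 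In particular, if $S_f$ is already G\^ateaux differentiable at $u$, then \cref{Theo:GateauxDerivative} already identifies $S_f'(u)$ with $\Xi(u;\cdot)$, and $S_f'(u)\in\partial S_f(u)$ trivially; the argument below, however, proceeds uniformly.

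Next I would construct a monotonically increasing sequence $(u_n)_{n\in\mathbb{N}}\subseteq\mathcal{D}_{S_f}$ with $u_n\to u$ in $U$, which is where \cref{Assumption} enters. Fix an interior point $p$ of the positive cone $\mathcal{P}$ of $V$ and $r>0$ with $B_V(p,r)\subseteq\mathcal{P}$; replacing $V$ by the closed span of $p$ together with a countable subset of $V$ whose $\iota$-image is dense in the separable space $U$, we may assume $V$ is separable (the interior-point property is inherited). For each $n$ the map $G_n\colon V\to H_0^1(\Omega)$, $G_n(z):=S_f\bigl(u-\tfrac1n\iota(p)+\iota(z)\bigr)$, is locally Lipschitz, so by \cref{Thm:Rademacher} its set of G\^ateaux points is dense in $V$; choose $z_n$ in this set with $\|z_n\|_V$ so small that $\tfrac1n p-z_n\ge_V 0$ and $\tfrac1{n(n+1)}p+z_{n+1}-z_n\ge_V 0$ (both achievable from $B_V(p,r)\subseteq\mathcal{P}$ and positive homogeneity of $\ge_V$), and set $u_n:=u-\tfrac1n\iota(p)+\iota(z_n)$. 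Since $\iota$ is increasing, $u_n\le_U u_{n+1}\le_U u$, and $u_n\to u$. To see that $S_f$ itself (not merely $G_n$) is G\^ateaux differentiable at $u_n$, use that $S_f$ is locally Lipschitz and Hadamard directionally differentiable, so $h\mapsto S_f'(u_n;h)$ is Lipschitz on $U$; on the dense subspace $\iota(V)$ it is given by $\iota(v)\mapsto G_n'(z_n)v$, a bounded linear map, hence extends to a bounded linear operator on $U$ which, by continuity of both sides, must coincide with $S_f'(u_n;\cdot)$ everywhere. Thus $u_n\in\mathcal{D}_{S_f}$.

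Now I would assemble the three ingredients. Applying \cref{Theo:GateauxDerivative} at each $u_n$ with the admissible choice $D_n:=I(f(u_n))\cup A_{\mathrm{w}}^\varphi(f(u_n))$ (which, as above, satisfies $I(f(u_n))\subseteq D_n\subseteq\Omega\setminus A_{\mathrm{s}}(f(u_n))$), we get that for every $h\in U$ the derivative $S_f'(u_n;h)$ is the unique solution of $\langle L\eta-f'(u_n;h),z\rangle=0$ for all $z\in H_0^1(D_n)$. By \cref{Lem:MoscoConvergence}(1), $H_0^1(D_n)\to H_0^1(D)$ in the sense of Mosco. Since $f$ is continuously differentiable, $f'(u_n)\to f'(u)$ in $\mathcal{L}(U,H^{-1}(\Omega))$, so $f'(u_n;h)\to f'(u;h)$ in $H^{-1}(\Omega)$ for each fixed $h$. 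Reading the variational equations as variational inequalities over the linear subspaces $H_0^1(D_n)$ and invoking \cref{Lem:RodriguesMosco} with the coercive operator $L$ yields $S_f'(u_n;h)\to\Xi(u;h)$ strongly in $H_0^1(\Omega)$, where $\Xi(u;h)$ is the unique solution of \cref{VEGeneralizedDerivative}. Linearity of $\Xi(u;\cdot)$ is clear, and boundedness follows from coercivity of $L$ and boundedness of $f'(u)$. Hence $S_f'(u_n)\to\Xi(u;\cdot)$ in the strong, hence also the weak, operator topology, and by \cref{Def:Differential} we conclude $\Xi(u;\cdot)\in\partial S_f(u)$. The case $D:=I(f(u))\cup A_\psi^{\mathrm{w}}(f(u))$ is identical, using a decreasing sequence $u_n:=u+\tfrac1n\iota(p)-\iota(z_n)\ge_U u$ and \cref{Lem:MoscoConvergence}(2).

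\textbf{Main obstacle.} The delicate part is the order-compatible selection of G\^ateaux points $u_n$: combining Rademacher's theorem in infinite dimensions with the interior-point structure of the cone in $V$, and then lifting G\^ateaux differentiability of the restriction $G_n$ on $V$ to genuine G\^ateaux differentiability of $S_f$ on $U$ via Hadamard directional differentiability and density of $\iota(V)$. Once the monotone sequence is in hand, the remainder is a routine application of the already-established Mosco convergence (\cref{Lem:MoscoConvergence}, which itself rests on the monotonicity \cref{Lem:MonotonicityStrictlyActiveSets}) together with the stability result \cref{Lem:RodriguesMosco}.
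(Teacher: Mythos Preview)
Your proposal is correct and follows essentially the same route as the paper: represent $S_f'(u_n;\cdot)$ via \cref{Theo:GateauxDerivative} with $D_n=I(f(u_n))\cup A_{\mathrm{w}}^\varphi(f(u_n))$, apply the Mosco convergence of \cref{Lem:MoscoConvergence}, and conclude via the stability result \cref{Lem:RodriguesMosco}. The only difference is that you spell out in detail the construction of the monotone sequence of G\^ateaux points (interior cone point, Rademacher on the separable substitute for $V$, and the density/Hadamard lifting argument), whereas the paper simply cites \cite[Prop.~5.5]{RaulsUlbrich} for this step; your verification that each $D_n$ satisfies $I(f(u_n))\subseteq D_n\subseteq\Omega\setminus A_{\mathrm{s}}(f(u_n))$ is also a useful explicit check that the paper leaves implicit.
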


\begin{proof}
	The proof of \cite[Prop.~5.5]{RaulsUlbrich}
	implies that
	we can find an increasing,
	respectively, decreasing,
	sequence $(u_n)_{n \in \mathbb{N}} \subseteq U$
	such that $S_f$
	is G\^ateaux differentiable at each $u_n$
	and such that $u_n$ converges to $u$.
	Here,
	\cref{Thm:Rademacher} is used.
	
	Let us first assume $(u_n)_{n \in \mathbb{N}}$ is an increasing sequence
	with these properties.
	Let $h \in U$ be arbitrary.
	By \cref{Theo:GateauxDerivative}, 
	for each $n \in \mathbb{N}$,
	$S_f'(u_n;h)$ can be written as the solution
	of the variational equation
	\begin{align}
	\label{VESequenceGateaux}
	\text{Find } \eta_n \in H_0^1(D_n):
	\quad
	\langle L\eta_n-f'(u_n;h),z\rangle=0
	\quad \forall\, z \in H_0^1(D_n)
	\end{align}
	with the choice $D_n:=I(f(u_n))\cup A_{\mathrm{w}}^\varphi(f(u_n))$.
	
	By \cref{Lem:MoscoConvergence},
	we conclude 
	\begin{align*}
	H_0^1(I(f(u_n))\cup A_{\mathrm{w}}^\varphi(f(u_n)))\to H_0^1(I(f(u))\cup A_{\mathrm{w}}^\varphi(f(u)))
	\end{align*}
	in the sense of Mosco. 
	Now,
	\cref{Lem:RodriguesMosco}
	implies that
	$(S_f'(u_n;h))_{n \in \mathbb{N}}$
	converges to the solution of \cref{VEGeneralizedDerivative}
	with $D:=I(f(u))\cup A_{\mathrm{w}}^\varphi(f(u))$.
	By definition,
	the resulting operator is a generalized derivative for $S_f$ in $u$.
	
	When considering a decreasing sequence $(u_n)_{n \in \mathbb{N}}$,
	we use the representation of $S_f'(u_n;h)$ as the solution of the variational equation \cref{VESequenceGateaux}
	with the choice $D_n:=I(f(u_n))\cup A_\psi^{\mathrm{w}}(f(u_n))$
	and obtain the respective Mosco convergence,
	and thus the convergence of $S_f'(u_n)$ to the solution operator of \cref{VEGeneralizedDerivative} with $D:=I(f(u))\cup A_\psi^{\mathrm{w}}(f(u))$ 
	from the second part of \cref{Lem:MoscoConvergence}.
\end{proof}

\subsection{Adjoint Representation of Clarke Subgradients}\label{Sec:Adj}

As in the unilateral case,
see \cite[Thm.~5.7]{RaulsUlbrich},
we can find an adjoint representation 
for the subgradient 
of a reduced objective function.

Assume $J \colon H_0^1(\Omega)\times U\to \mathbb{R}$
is a continuously differentiable objective function.
We consider the optimization problem
\begin{align*}
&\min_{y,u} J(y,u)\\
&\text{subject to } y \in K_\psi^\varphi, 
\quad
\langle Ly-f(u), z-y\rangle\geq 0 \quad \forall\, z \in K_\psi^\varphi. 
\end{align*}
We present a formula for two generalized derivatives
contained in Clarke's generalized differential
that can be obtained
for the reduced objective function
\begin{align*}
\hat{J}(u):=J(S_f(u),u)
\end{align*}
in an arbitrary point $u \in U$.

\begin{corollary}
	\label{Cor:Adjoint}
	Suppose that \cref{AssumptionObstacles} and \cref{Assumption} are satisfied
	and let $u \in U$ be arbitrary.
	Denote by $q$ the unique solution
	of the variational equation
	\begin{align}
	\label{*}
	\text{Find } q \in H_0^1(D),
	\quad
	\langle L^*q,v\rangle=\langle J_y(S_f(u),u),v\rangle
	\quad \forall\, v \in H_0^1(D).
	\end{align}	
	Then the element
	\begin{align*}
	f'(u)^*q+J_u(S_f(u),u)
	\end{align*}
	is in Clarke's generalized differential 
	$\partial_C\hat{J}(u)$.
	In \cref{*},
	the respective sets
	\begin{align*}
	D:=I(f(u))\cup A_{\mathrm{w}}^\varphi(f(u)) \quad \text{ or } \quad D:=I(f(u))\cup A_\psi^{\mathrm{w}}(f(u))
	\end{align*}
	can be chosen and result in a 
	particular generalized derivative.
\end{corollary}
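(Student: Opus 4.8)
The plan is to combine \cref{Theo:MainResult} with the chain-rule inclusion recorded in \cref{Rem:InclusionGeneralizedDifferentials} and then to identify the adjoint of the constructed generalized derivative explicitly. First I would observe that \cref{*} is well posed: since $L$ is coercive, so is $L^*$ (with the same constant, because $\langle L^*z,z\rangle=\langle Lz,z\rangle$ for $z\in H_0^1(\Omega)$), so the Lax--Milgram lemma applied on the closed subspace $H_0^1(D)$ of $H_0^1(\Omega)$ yields a unique $q\in H_0^1(D)$ solving \cref{*}, for either admissible choice of $D$. By \cref{Theo:MainResult}, the operator $\Xi(u;\cdot)\in\mathcal{L}(U,H_0^1(\Omega))$ defined through \cref{VEGeneralizedDerivative} with the chosen set $D$ belongs to $\partial S_f(u)$, and \cref{Rem:InclusionGeneralizedDifferentials} then gives
\begin{align*}
\Xi(u;\cdot)^*J_y(S_f(u),u)+J_u(S_f(u),u)\in\partial\hat{J}(u)\subseteq\partial_C\hat{J}(u).
\end{align*}
Hence it suffices to prove the identity $\Xi(u;\cdot)^*J_y(S_f(u),u)=f'(u)^*q$ in $U^*$.

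For the core step I would fix $h\in U$, set $\eta:=\Xi(u;h)\in H_0^1(D)$, and use that, since $f$ is continuously differentiable, the directional derivative $f'(u;h)$ coincides with $f'(u)h$ for the bounded linear operator $f'(u)\in\mathcal{L}(U,H^{-1}(\Omega))$; thus $\langle L\eta,z\rangle=\langle f'(u)h,z\rangle$ for all $z\in H_0^1(D)$. Since $q\in H_0^1(D)$ it is admissible as a test function in this equation, and since $\eta\in H_0^1(D)$ it is admissible in \cref{*}. Testing \cref{*} with $v=\eta$ and the equation for $\eta$ with $z=q$, and using the definition of the adjoint $L^*$ together with reflexivity of $H_0^1(\Omega)$ to identify $(H^{-1}(\Omega))^*$ with $H_0^1(\Omega)$, I obtain
\begin{align*}
\langle J_y(S_f(u),u),\eta\rangle=\langle L^*q,\eta\rangle=\langle L\eta,q\rangle=\langle f'(u)h,q\rangle=\langle f'(u)^*q,h\rangle_{U^*,U}.
\end{align*}
The left-hand side equals $\langle\Xi(u;\cdot)^*J_y(S_f(u),u),h\rangle_{U^*,U}$ by definition of the adjoint operator, and since $h\in U$ was arbitrary this gives $\Xi(u;\cdot)^*J_y(S_f(u),u)=f'(u)^*q$, which proves the claim; the two permissible choices of $D$ are treated identically and yield the two announced subgradients.

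I do not anticipate a genuine obstacle here, as this is in essence the standard adjoint-state computation: the substance of the result lies in \cref{Theo:MainResult} and \cref{Rem:InclusionGeneralizedDifferentials}, which are available. The only points deserving a little care are the well-posedness of \cref{*} (coercivity of $L^*$ restricted to $H_0^1(D)$), the identification of $f'(u;h)$ with the linear map $f'(u)h$, and the bookkeeping of the dual pairings so that $L^*$ and $f'(u)^*$ act between the correct spaces.
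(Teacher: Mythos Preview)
Your proposal is correct and follows essentially the same approach as the paper: both use the well-posedness of \cref{*} via coercivity of $L^*$, invoke \cref{Theo:MainResult} together with \cref{Rem:InclusionGeneralizedDifferentials}, and then verify $\Xi(u;\cdot)^*J_y(S_f(u),u)=f'(u)^*q$ by testing \cref{VEGeneralizedDerivative} with $q$ and \cref{*} with $\Xi(u;h)$. The only difference is cosmetic---you run the chain of equalities from $\langle J_y(S_f(u),u),\eta\rangle$ to $\langle f'(u)^*q,h\rangle$, while the paper runs it in the opposite direction.
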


\begin{proof}
	The proof is similar to the proof of \cite[Thm.~5.4]{RaulsUlbrich}.
	
	Since $L^*$ is coercive,
	\cref{*} has a unique solution.
	As stated in \cref{Rem:InclusionGeneralizedDifferentials},
	we have
	\begin{align}
	\label{InclusionSubdifferentials}
	\partial_C \hat{J}(u) \ni \Xi^*J_y(S_f(u),u)+J_u(S_f(u),u)
	\end{align}
	for all $\Xi \in \partial S_f(u)$.
	
	Assume that $q$ solves \cref{*} for $D:=I(f(u))\cup A_{\mathrm{w}}^\varphi(f(u))$, 
	respectively $D:=I(f(u))\cup A_\psi^{\mathrm{w}}(f(u))$.
	For $h \in U$, denote by $\Xi(u;h)$ the solution to \cref{VEGeneralizedDerivative}.
	Now, we have
	\begin{align*}
	\langle f'(u)^*q,w\rangle_{U^*,U}
	&=\langle f'(u;w),q\rangle\\
	&\stackrel{\cref{VEGeneralizedDerivative}}{=}\langle L^*q,\Xi(u;w)\rangle\\
	&\stackrel{\cref{*}}{=}\langle J_y(S_f(u),u),\Xi(u;w)\rangle\\
	&=\langle \Xi(u;\cdot)^*J_y(S_f(u),u),w\rangle_{U^*,U}
	\end{align*}
	for all $w \in U$.
	Since $\Xi(u;\cdot) \in \partial S_f(u)$,
	the statement follows from \cref{InclusionSubdifferentials}.
\end{proof}

\section*{Acknowledgments}
This work was supported by the DFG under grant UL158/10-1 within the Priority Program SPP 1962 
Non-smooth and Complementarity-based Distributed Parameter Systems: Simulation and Hierarchical Optimization.

\bibliography{LiteratureTwosidedVI}
\bibliographystyle{alpha}	
\end{document}